\tikzstyle{decision} = [diamond, draw, fill=blue!20, 
\tikzstyle{block} = [rectangle, draw, fill=blue!20, 
\tikzstyle{line} = [draw, -latex']
\tikzstyle{cloud} = [draw, ellipse,fill=red!20, node distance=3cm,
\tikzset{main node/.style={circle,fill=blue!20,draw,minimum size=1cm,inner sep=0pt},  }
\def\v{\bm{v}}
\def\m{\bm{m}}
\def\d{\mathrm{d}}
\begin{document}
\title[]{Generalized Unnormalized Optimal Transport and its fast algorithms}
\author[Lee]{Wonjun Lee}
\author[Lai]{Rongjie Lai}
\author[Li]{Wuchen Li}
\author[Osher]{Stanley Osher}
\email{wlee@math.ucla.edu}
\email{lair@rpi.edu}
\email{wcli@math.ucla.edu}
\email{sjo@math.ucla.edu}
\address{Department of Mathematics, Rensselaer Polytechnic Institute.}
\address{Department of Mathematics, University of California, Los Angeles.}
\thanks{W. Lee, W. Li and S. Osher's research are supported in part by AFOSR MURI FA9550-18-1-0502. R. Lai's reserach is supported in part by an NSF CAREER Award DMS--1752934.}
\keywords{Generalized unnormalized optimal transport; Generalized unnormalized Monge-Amp\`ere equation; Generalized unnormalized Kantorovich formula; Unconstrained optimization problem}
\maketitle

\begin{abstract}
We introduce fast algorithms for generalized unnormalized optimal transport. To handle densities with different total mass, we consider a dynamic model, which mixes the $L^p$ optimal transport with $L^p$ distance. For $p=1$, we derive the corresponding $L^1$ generalized unnormalized Kantorovich formula. We further show that the problem becomes a simple $L^1$ minimization which is solved efficiently by a primal-dual algorithm. For
$p=2$, we derive the $L^2$ generalized unnormalized Kantorovich formula, a new unnormalized Monge problem and the corresponding Monge-Amp\`ere equation. Furthermore, we introduce a new unconstrained optimization formulation of the problem. The associated gradient flow is essentially related to an elliptic equation which can be solved efficiently. Here the proposed gradient descent procedure together with the Nesterov acceleration involves the Hamilton-Jacobi equation which arises from the KKT conditions. Several numerical examples are presented to illustrate the effectiveness of the proposed algorithms.
\end{abstract}

\section{Introduction}
\label{sec:intro}
Optimal transport describes transport plans and metrics between two densities with equal total mass \cite{Villani2009_optimal}. It has wide applications in various fields such as physics \cite{LegerLi2019_hopfcole,LiSBP}, mean field games \cite{HJD}, image processing \cite{PeyreCuturi2018_computationala}, economics \cite{beck2009fast}, inverse problem \cite{Yang2,Yang1}, Kalman filter \cite{Garbuno-InigoHoffmannLiStuart2019_interacting} as well as machine learning \cite{WGAN,Wproximal}. In practice, it is also natural to consider transport and metrics between two densities with different total mass. For example, in image processing, it is very common that we need to compare and process images with unequal total intensities \cite{Li2}. 

Recently, there has been increasing interests in studying the optimal transport between two densities with different total mass. Based on the linear programming formulation, generalized versions for unnormalized optimal transport have been considered in \cite{PR2,Thorpe:2017:TLD:3140477.3140524}. In this paper, our discussion is based on the fluid-dynamic formulation following \cite{BenamouBrenier2000_computational}, which has significantly fewer variables than the linear programming formulation. In this work, a source function is considered to provide dynamical behaviors of a source term during transportation. Adding a source term for handling densities with unequal total mass has been considered in \cite{CL,chen2019interpolation,WF,WF2,Mielke,Maas,PR}. 
These methods consider density-dependent source terms and lead to a dynamical mixture of Wasserstein-2 distance and Fisher-Rao distance. The corresponding minimization of the source term is weighted with the density. 
More recently, a spatially independent source function was considered in \cite{gangbo2019unnormalized} to transport densities with unequal mass. This model results in creating or removing masses in the space uniformly during transportation when moving one density to another. Here, we further extend the model \cite{gangbo2019unnormalized} using a spatially dependent source function. As a result, the transportation map between two densities with different masses has the flexibility to create or remove masses locally. 
In all our models, the source term does not depend on the current density. This property keeps the Hamilton-Jacobi equation arising in the original (normalized) optimal transport problem. We further explore the Kantorovich duality and derive the corresponding unnormalized Monge problems and Monge-Amp\`ere equations.
Besides these model derivations, the other main contribution of this paper is to propose fast algorithms for all related dynamical optimal transport problems with source terms. 

More specifically, the proposed model is a minimal flux problem mixing both $L^p$ metric and Wasserstein-$p$ metric, following Benamou-Brenier formula \cite{BenamouBrenier2000_computational}. In particular, we focus on the cases $p=1$ and $p=2$, and design corresponding fast algorithms. For the $L^1$ case, we propose a primal-dual algorithm \cite{CP}. The method updates variables at each iteration with explicit formulas, which only involve low computational cost shrink operators, such as those used in \cite{LiRyuOsherYinGangbo2018_parallel}. For the $L^2$ case, we formulate the minimal flux problem into an unconstrained minimization problem as follows
\begin{equation}\label{a}
\begin{split}
\inf_{\mu} \Biggl\{ &\int^1_0 \int_\Omega 
         \partial_t \mu(t,x) (-\nabla\cdot(\mu(t,x)\nabla) + \alpha \textrm{Id})^{-1} \partial_t \mu(t,x) \d x \d t:\\
& \quad\quad           \mu(0,x) = \mu_0(x), \mu(1,x) = \mu_1(x), x\in\Omega
            \Biggl\},
\end{split}
\end{equation}
where $\alpha$ is a given positive scalar, Id is the identity operator, and the infimum is taken among all density paths $\mu(t,x)$ with fixed terminal densities $\mu_0$, $\mu_1$. From the associated Euler-Lagrange equation, we derive a Nesterov accelerated gradient descent method to solve the unnormalized optimal transport problem. It turns out that our method only needs to solve an elliptic equation involving the density at each iteration. Thus, fast solvers for elliptic equations can be directly used. Interestingly, the Euler-Lagrange equation of this formulation introduces the Hamilton-Jacobi equation, which characterizes the Lagrange multiplier (see related studies in \cite{LiG}). 
We, in fact, construct the gradient descent method in the density path space to solve this equation:
\begin{equation*}
\partial_\tau\mu(\tau,t,x)=\partial_t\Phi(\tau,t,x)+\frac{1}{2}\|\nabla\Phi(\tau,t,x)\|^2,
\end{equation*}
with
\begin{equation*}
\Phi(\tau,t,x)=(-\nabla\cdot(\mu(\tau,t,x)\nabla)+\alpha Id)^{-1}\partial_t\mu(\tau,t,x).
\end{equation*}
Here $\tau$ is an artificial time variable in optimization. The minimizer path $\mu^*(t,x)$ is obtained by  solving $\mu^*(t,x)=\lim_{\tau\rightarrow\infty}\mu(\tau,t,x)$ numerically.

The outline of this paper is as follows. In section \ref{section:prob-statement}, we propose a formulation for the generalized unnormalized optimal transport. We then derive the Kantorovich duality for both cases. We also formulate the generalized unnormalized Monge problem and the corresponding Monge-Amp\`ere equation. In section \ref{section:numerical-methods}, we propose a fast algorithm for $L^1$-generalized unnormalized optimal transport using a primal-dual based method. We also propose a new method for $L^2$-generalized unnormalized optimal transport based on the Nesterov accelerated gradient descent method. In addition, we discuss detailed numerical discretization of the two problems. In section \ref{section:numerical-experiments}, we present several numerical experiments to demonstrate our algorithms. We conclude the paper in section \ref{sec:con}.


\section{Generalized unnormalized optimal transport}\label{section:prob-statement}
In this section, we study a formulation of generalized unnormalized optimal transport problem as a natural extension of the exploration studied in \cite{gangbo2019unnormalized}. We specifically discuss the $L^1$ and $L^2$ versions of the generalized unnormalized optimal transport and their associated Kantorovich dualities. Furthermore, we derive a new generalized unnormalized Monge problem and the corresponding Monge-Amp\`ere equation.

Let $\Omega \subset \RR^d$ be a compact convex domain. Denote the space of unnormalized densities $\cm(\Omega)$ by
$$\cm(\Omega) := \{ \mu \in L^1(\Omega): \mu(x) \geq 0 \}.$$

Given two densities $\mu_0,\mu_1 \in \cm(\Omega)$, we define the generalized unnormalized optimal transport as follows:
\begin{definition}[Generalized Unnormalized Optimal Transport] \label{def:p}
    Define the $L^p$ generalized unnormalized Wasserstein distance $UW_p: \cm(\Omega) \times \cm(\Omega) \rightarrow \RR$ by
    \begin{align*}
        UW_p(\mu_0,\mu_1)^p
        = \inf_{\v,\mu,f} \int^1_0 \int_\Omega \|\v(t,x)\|^p \mu (t,x) \d x\d t + \frac{1}{\alpha} \int^1_0\int_\Omega |f(t,x)|^p \d x\d t,
    \end{align*}
    such that the dynamical constraint, i.e. the unnormalized continuity equation, holds
    \begin{align*}
        \partial_t \mu(t,x) + \nabla \cdot (\mu(t,x)\v(t,x)) = f(t,x),\quad\mu(0,x) = \mu_0(x),\quad \mu(1,x) = \mu_1(x).
    \end{align*}
    The infimum is taken over continuous unnormalized density functions $\mu: [0,1] \times \Omega \rightarrow \RR$, and Borel vector fields $v:[0,1] \times \Omega \rightarrow \RR^d$ with zero flux condition on $[0,1]\times \partial \Omega$, and Borel spatially dependent source functions $f:[0,1] \times \Omega \rightarrow \RR$ with a positive constant $\alpha$.
\end{definition}
This is a generalized definition of unnormalized optimal transport from \cite{gangbo2019unnormalized}. Here, we consider a spatially dependent source function $f(t,x)$. In this paper, we will focus on the cases with $p=1$ and $p=2$. 

\begin{remark}
We note that \cite{chen2019interpolation} has proposed the model for $p=2$ without any discussion about numerical methods. In this paper, we mainly study Kantorovich duality and design fast algorithms. 
\end{remark}
\begin{remark}
In literature, \cite{WF} studied the other dynamical formulations of unbalanced optimal transport problems. In their approach, the optimal source term is expressed as a product of a density function and a scalar field function.
In our approach, the optimal source term only depends on a scalar field function. 
This fact shows that our approach is different from \cite{WF} in variational problems and dual (Kantorovich) problems.
\end{remark}

\subsection{$L^1$ Generalized Unnormalized Wasserstein metric.} When $p=1$, the problem (\ref{def:p}) becomes
\begin{equation}\label{def-L1}
\begin{split}
    UW_1(\mu_0, \mu_1) = \inf_{\v,\mu,f}\Biggl\{ \int^1_0 \int_\Omega \|\v(t,x)\| \mu (t,x) \d x\d t + \frac{1}{\alpha} \int^1_0\int_\Omega |f(t,x)| \d x\d t:&\\
    \partial_t \mu(t,x) + \nabla \cdot (\mu(t,x)\v(t,x)) = f(t,x)& \\
    \quad\mu(0,x) = \mu_0(x),\quad\mu(1,x) = \mu_1(x)&
    \Biggl\}.
\end{split}
\end{equation}
Here $\|\cdot\|$ can be any homogeneous of degree one norm, i.e. $l_q$ norm. E.g., $\|u\|_q=(\sum_{i=1}^d|u_i|^q)^{\frac{1}{q}}$. In particular, we consider $q=1,2$ with 
\begin{align*}
    \|u\|_1 = |u_1| + \cdots + |u_d|\indent \text{ for $ u \in \RR^d$},
\end{align*} 
or 
\begin{align*}
    \|u\|_2 = \sqrt{|u_1|^2 + \cdots + |u_d|^2}\indent \text{ for $ u \in \RR^d$}.
\end{align*} 

\begin{proposition}\label{prop:l1}
    The $L^1$ unnormalized Wasserstein metric is given by
    \begin{align}\label{eq:prop-l1-m-c}
        UW_1(\mu_0,\mu_1) = \inf_{\m,c} \Biggl\{ 
        & \int_\Omega \|\m(x)\| \d x + \frac{1}{\alpha} \int_{\Omega} |c(x)| \d x ~:~ \nonumber\\
        &\mu_1(x) - \mu_0(x) + \nabla \cdot \m(x) - c(x) = 0
    \Biggl\}.
    \end{align}
    There exists $\Phi(x)$, such that the minimizer $(m,c)$ for the problem (\ref{eq:prop-l1-m-c}) satisfies
$$ \nabla\Phi(x) \in \partial \|\m(x)\| \quad \text{and} \quad \alpha \Phi(x) \in \partial |c(x)|$$
where $\partial \|\m(x)\|$ and $\partial |c(x)|$ denote their sub-differentials. 
\end{proposition}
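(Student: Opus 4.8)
The plan is to reduce the dynamic problem (\ref{def-L1}) to the static problem (\ref{eq:prop-l1-m-c}) through the change of variables $\m(x)=\int_0^1\mu(t,x)\v(t,x)\,\d t$ and $c(x)=\int_0^1 f(t,x)\,\d t$, and then to read off the stated optimality relations from the Lagrangian (Kantorovich) dual of (\ref{eq:prop-l1-m-c}). The essential structural feature of the $L^1$ case is that, since the norm is homogeneous of degree one and $\mu\ge 0$, the momentum cost satisfies $\|\v(t,x)\|\,\mu(t,x)=\|\mu(t,x)\v(t,x)\|$, so that in the $(\mu,\mu\v,f)$ variables the integrand $\|\mu\v\|$ no longer depends on $\mu$ explicitly. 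This decoupling is what makes a time-independent flux and source optimal.

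For the inequality ``static $\le$ dynamic'', I would take any admissible triple $(\v,\mu,f)$ for (\ref{def-L1}), set $\m$ and $c$ as above, and integrate the continuity equation in $t$ over $[0,1]$. The term $\int_0^1\partial_t\mu\,\d t$ yields $\mu_1-\mu_0$, the divergence commutes with the $t$-integration to give $\nabla\cdot\m$, and the source gives $c$; hence $(\m,c)$ is admissible for (\ref{eq:prop-l1-m-c}). Jensen's inequality (convexity of $\|\cdot\|$ and $|\cdot|$ together with the identity $\|\v\|\mu=\|\mu\v\|$) then bounds the static cost of $(\m,c)$ by the dynamic cost of $(\v,\mu,f)$, and taking the infimum gives one direction. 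For the reverse inequality, given an admissible $(\m,c)$ for (\ref{eq:prop-l1-m-c}) I would choose the time-independent flux and source $\mu(t,x)\v(t,x)=\m(x)$, $f(t,x)=c(x)$, and the linear interpolation $\mu(t,x)=(1-t)\mu_0(x)+t\mu_1(x)\ge 0$. A direct check shows the continuity equation holds and the two costs coincide, so dynamic $\le$ static; combining the inequalities establishes (\ref{eq:prop-l1-m-c}).

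For the optimality relations, I would introduce a multiplier $\Phi(x)$ for the linear constraint and form the Lagrangian
\begin{equation*}
\int_\Omega\|\m\|\,\d x+\frac1\alpha\int_\Omega|c|\,\d x+\int_\Omega\Phi\,(\mu_1-\mu_0+\nabla\cdot\m-c)\,\d x.
\end{equation*}
Integrating $\int_\Omega\Phi\,\nabla\cdot\m\,\d x$ by parts (the boundary contribution vanishing by the zero-flux condition) turns it into $-\int_\Omega\nabla\Phi\cdot\m\,\d x$, after which the Lagrangian separates into an $\m$-part $\int_\Omega(\|\m\|-\nabla\Phi\cdot\m)\,\d x$ and a $c$-part $\int_\Omega(\frac1\alpha|c|-\Phi c)\,\d x$. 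The first-order (KKT) stationarity in $\m$ and in $c$ is exactly $\nabla\Phi\in\partial\|\m\|$ and $\alpha\Phi\in\partial|c|$; equivalently, minimizing each part pointwise recovers the dual constraints $\|\nabla\Phi\|_*\le 1$ and $\alpha|\Phi|\le 1$, and the asserted subdifferential inclusions are the complementary-slackness conditions at the optimum. Existence of such a $\Phi$ follows from strong duality for this convex program.

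The main obstacle, which I would treat carefully rather than by the formal manipulation above, is the degeneracy of the change of variables where $\mu$ vanishes: the identification $\v=\m/\mu$ is meaningful only on $\{\mu>0\}$, so one must either work directly in the convex variables $(\mu,\m,f)$ with the lower-semicontinuous integrand $\|\m\|$ (finite exactly when $\m=0$ on $\{\mu=0\}$) or approximate by strictly positive densities and pass to the limit. The second delicate point is justifying strong duality and the existence of the multiplier $\Phi$: this requires a constraint qualification and enough regularity to legitimize the integration by parts, which is where the bulk of the rigorous work lies.
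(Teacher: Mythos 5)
Your proposal is correct and follows essentially the same route as the paper's proof: the change of variables $\m=\int_0^1\mu\v\,\d t$, $c=\int_0^1 f\,\d t$, Jensen's inequality via the degree-one homogeneity $\|\v\|\mu=\|\mu\v\|$, attainment by the linear interpolation $\mu(t,x)=(1-t)\mu_0(x)+t\mu_1(x)$ with time-independent flux and source, and the Lagrangian/KKT derivation of the subdifferential inclusions. Your added care about the degenerate set $\{\mu=0\}$ and the constraint qualification for strong duality goes beyond the paper's formal treatment (and you also fix the paper's transposed interpolation $t\mu_0+(1-t)\mu_1$), but these are refinements of the same argument, not a different one.
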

\begin{proof}

\noindent Denote
\begin{align*}
    \m(x) = \int^1_0 \v(t,x) \mu(t,x) \d t,
\end{align*}
Using Jensen's inequality and integration by parts, we can reformulate (\ref{def-L1}).
\begin{equation}
    \begin{split}
            &\int^1_0 \int_\Omega \|\v(t,x)\| \mu(t,x) \d x \d t + \frac{1}{\alpha} \int^1_0 \int_{\Omega} |f(t,x)| \d x \d t
        \\
            &\geq \int_\Omega \|\m(x)\| \d x + \frac{1}{\alpha} \int_{\Omega} \left|\int^1_0  f(t,x) \d t \right| \d x.
         \label{eq:jensen-2}
    \end{split}
\end{equation}


\noindent Define $c(x) = \int^1_0 f(t,x) \d t$. Integrating on the constraint of problem \eqref{def-L1} with the zero flux condition of $v$ yields,
\begin{align*}
    \int_\Omega c(x) \d x = \int^1_0 \int_\Omega f(t,x) \d x \d t = \int_\Omega \mu_1(x)\d x - \int_\Omega \mu_0(x) \d x.
\end{align*}
Plug $c(x)$ into the equation (\ref{eq:jensen-2}), we obtain a new formulation.
\begin{align*}
    \inf_{\m,c} \Biggl\{  
        \int_\Omega \|\m(x)\| \d x + \frac{1}{\alpha} \int_{\Omega} \|c(x)\| \d x ~:~ 
        &\mu_1(x) - \mu_0(x) + \nabla \cdot \m(x) - c(x) = 0
    \Biggl\}.
\end{align*}

Note that the minimization path can be attained in the inequality \eqref{eq:jensen-2} by choosing $\mu(t,x) = t \mu_0(x) + (1-t) \mu_1(x)$, $\m(x) = \mu(t,x) \v(t,x)$ and $f(t,x) = c(x)$. Then $\{\mu(t,x), \v(t,x),f(t,x)\}$ is a feasible solution to (\ref{def-L1}) and (\ref{eq:prop-l1-m-c}) , hence the two minimization problems have the same optimal value.

Consider the Lagrangian of this minimization problem.
\begin{align}\label{prop:l1-lagrangian}
    \cl (\m,c,\Phi) &= \int_\Omega \|\m(x)\|\d x + \frac{1}{\alpha}\int_\Omega |c(x)|\d x + \int_\Omega \Phi(x)\biggl( \mu_1(x) - \mu_0(x) + \nabla\cdot \m(x) - c(x) \biggl),
\end{align}
where $\Phi(x)$ is a Lagrange multiplier. From the Karush\textendash Kuhn\textendash Tucker (KKT) conditions, we derive the following properties of the minimizer
\begin{align*}
    0\in \partial_{\m} \cl &\Rightarrow  \nabla \Phi(x)\in \partial \|\m(x)\| \\
    0\in \partial_c \cl  &\Rightarrow  \alpha \Phi(x) \in \partial |c(x)| \\
    \delta_{\Phi} \cl =0 &\Rightarrow \mu_1(x) - \mu_0(x) + \nabla\cdot \m(x) - c(x) = 0.
\end{align*}

\end{proof}

\begin{remark}
    In the case that $L^1$ unnormalized Wasserstein metric with a spatially independent function $f(t)$, $c$ is defined to be $c = \int^1_0 f(t) \d t$, which is a constant. Integrating on a spatial domain for continuity equation,
    \begin{equation*}
        c = \frac{1}{|\Omega|}\biggl(\int_\Omega \mu_0(x) \d x - \int_\Omega \mu_0 (x) \d x \biggl).
    \end{equation*}
    As a result, the minimization problem becomes
    \begin{equation*}
        \begin{split}
        UW_1(\mu_0,\mu_1) = \inf_{\m} \biggl\{&
            \int_\Omega \|\m(x)\|\d x + \frac{1}{\alpha} \biggl| \int_\Omega \mu_1(x)\d x - \int_\Omega \mu_0(x) \d x \biggl|:\\
            &\mu_1(x) - \mu_0(x) + \nabla \cdot \m(x) = \frac{1}{|\Omega|}\biggl(\int_\Omega \mu_1(x)\d x - \int_\Omega \mu_0(x) \d x\biggl)
        \biggl\}.
        \end{split}
    \end{equation*}
    This is compatible with the result obtained in \cite{gangbo2019unnormalized}. In this case, we note that $\m(x)$ does not depend on $\alpha$.
\end{remark}

\begin{proposition}[$L^1$ Generalized Unnormalized Kantorovich formulation]\label{prop:l1-kantorovich}
    The Kantorovich formulation of $L^1$ unnormalized Wasserstein metric is the following:
    \begin{align}
        UW_1(\mu_0,\mu_1) = \sup_\Phi \biggl\{
            \int_\Omega\Phi(x)(\mu_1(x) - \mu_0(x))\d x : \|\nabla \Phi\| \leq 1, |\Phi| \leq \frac{1}{\alpha}
        \biggl\}
    \end{align}
    
\end{proposition}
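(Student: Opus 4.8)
The plan is to obtain the claimed formula as the Lagrangian dual of the primal problem \eqref{eq:prop-l1-m-c}, starting from the Lagrangian $\mathcal{L}(\m,c,\Phi)$ already written down in \eqref{prop:l1-lagrangian}. Since $\Phi$ enters only linearly through the affine equality constraint, one has $\sup_\Phi \mathcal{L}(\m,c,\Phi) = \int_\Omega\|\m\|\,\d x + \frac{1}{\alpha}\int_\Omega|c|\,\d x$ when $(\m,c)$ is feasible and $+\infty$ otherwise, so that $UW_1(\mu_0,\mu_1) = \inf_{\m,c}\sup_\Phi \mathcal{L}$. The whole proof then amounts to exchanging the order of the infimum and supremum and evaluating the resulting inner minimization, which I expect to reproduce exactly the two pointwise constraints $\|\nabla\Phi\|\le 1$ and $|\Phi|\le 1/\alpha$.

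First I would compute the dual function $g(\Phi):=\inf_{\m,c}\mathcal{L}(\m,c,\Phi)$ for a fixed multiplier $\Phi$. Integrating the term $\int_\Omega \Phi\,\nabla\cdot\m\,\d x$ by parts and using the zero-flux boundary condition on $\m$, the Lagrangian splits as $\int_\Omega\Phi(\mu_1-\mu_0)\,\d x + \int_\Omega(\|\m\|-\nabla\Phi\cdot\m)\,\d x + \int_\Omega(\frac{1}{\alpha}|c|-\Phi c)\,\d x$, in which $\m$ and $c$ decouple and can be minimized pointwise. Because $\|\cdot\|$ is a norm, $\inf_{\m}(\|\m\|-\nabla\Phi\cdot\m)$ equals $0$ when $\nabla\Phi$ lies in the unit ball of the dual norm and $-\infty$ otherwise; likewise $\inf_c(\frac{1}{\alpha}|c|-\Phi c)$ equals $0$ when $|\Phi|\le 1/\alpha$ and $-\infty$ otherwise. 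Hence $g(\Phi)=\int_\Omega\Phi(\mu_1-\mu_0)\,\d x$ on the feasible set $\{\|\nabla\Phi\|\le 1,\ |\Phi|\le 1/\alpha\}$ and $-\infty$ off it, which is precisely the claimed dual objective. (For $\|\cdot\|=\|\cdot\|_2$ the dual norm is again $\|\cdot\|_2$, matching the notation in the statement.)

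The main obstacle is justifying the equality $\inf\sup=\sup\inf$, i.e.\ strong duality together with attainment, rather than only the weak-duality inequality $\sup_\Phi g(\Phi)\le UW_1$. The weak inequality is immediate: for any dual-feasible $\Phi$ and primal-feasible $(\m,c)$, substituting $\mu_1-\mu_0=c-\nabla\cdot\m$ and integrating by parts yields $\int_\Omega\Phi(\mu_1-\mu_0)\,\d x = \int_\Omega\Phi c\,\d x + \int_\Omega\nabla\Phi\cdot\m\,\d x \le \frac{1}{\alpha}\int_\Omega|c|\,\d x + \int_\Omega\|\m\|\,\d x$, using the two feasibility constraints and the H\"older/Cauchy--Schwarz pairing between a norm and its dual. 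To upgrade this to equality I would exhibit an optimal $\Phi^*$, and here I would use exactly the Lagrange multiplier $\Phi$ furnished by the KKT analysis of Proposition \ref{prop:l1}. The subdifferential description of a norm gives, for the primal optimizer $(\m^*,c^*)$, the complementarity identities $\nabla\Phi^*\cdot\m^* = \|\m^*\|$ with $\|\nabla\Phi^*\|\le 1$ from $\nabla\Phi^*\in\partial\|\m^*\|$, and $\Phi^* c^* = \frac{1}{\alpha}|c^*|$ with $|\Phi^*|\le 1/\alpha$ from $\alpha\Phi^*\in\partial|c^*|$; thus $\Phi^*$ is dual-feasible and, by the same integration-by-parts computation, attains $\int_\Omega\Phi^*(\mu_1-\mu_0)\,\d x = \int_\Omega\|\m^*\|\,\d x + \frac{1}{\alpha}\int_\Omega|c^*|\,\d x = UW_1(\mu_0,\mu_1)$. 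The delicate point is the existence of this multiplier and the validity of the subdifferential formula in the infinite-dimensional $L^1$ setting; alternatively one can bypass attainment and secure strong duality directly by invoking the Fenchel--Rockafellar theorem after checking a constraint qualification, such as the existence of a feasible pair in the relevant interior.
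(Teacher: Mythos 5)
Your proposal is correct and follows essentially the same route as the paper's proof: the same Lagrangian from \eqref{prop:l1-lagrangian}, integration by parts with the zero-flux condition, pointwise minimization over $\m$ and $c$ to produce the constraints $\|\nabla\Phi\|\le 1$ and $|\Phi|\le 1/\alpha$, and closure of the duality gap via the multiplier $\Phi$ and the subdifferential complementarity identities from Proposition \ref{prop:l1}. Your additional remarks (the dual-norm distinction and the Fenchel--Rockafellar alternative for rigor in the $L^1$ setting) are refinements of points the paper leaves implicit, not a different argument.
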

\begin{remark}
The Kantorovich formulation of the generalized unnormalized Wasserstein-1 metric has also been stated in \cite{chen2017matricialW1} for the $\|\cdot\|_2$ norm. \end{remark}
\begin{proof}
    From the Lagrangian (\ref{prop:l1-lagrangian}),
    \begin{align*}
        &\inf_{\m,c} \sup_\Phi \cl(\m,c,\Phi)\\
        &\geq \sup_\Phi \inf_{\m,c} \cl(\m,c,\Phi)\\
        &= \sup_\Phi \inf_{\m,c} \biggl\{ \int_\Omega \|\m(x)\|\d x + \frac{1}{\alpha} \int_\Omega |c(x)| \d x + \int_\Omega \Phi(x) (\mu_1(x) - \mu_0(x) + \nabla \cdot \m(x) - c(x)) \d x\biggl\}\\
        &= \sup_\Phi \inf_{\m,c} \biggl\{ \int_\Omega \|\m(x)\|\d x + \frac{1}{\alpha} \int_\Omega |c(x)| \d x + \int_\Omega \Phi(x) (\mu_1(x) - \mu_0(x) - c(x)) \d x \\
        & \qquad - \int_\Omega \nabla \Phi(x) \cdot \m(x) \d x + \int_{\partial \Omega} \Phi(x) \m(x) \cdot n(x) ds(x) \biggl\}\\
        &= \sup_\Phi \biggl\{\int_\Omega \Phi(x) (\mu_1(x) - \mu_0(x)) + \inf_{\m,c} \int_\Omega \|\m(x)\| - \nabla \Phi(x) \cdot \m(x) \d x + \int_\Omega \frac{1}{\alpha} |c(x)| - \Phi(x) c(x) \d x \biggl\}\\
        &= \sup_\Phi \biggl\{
            \int_\Omega\Phi(x)(\mu_1(x) - \mu_0(x))\d x : \|\nabla \Phi\| \leq 1, |\Phi| \leq \frac{1}{\alpha}
        \biggl\}.
    \end{align*}
    From the calculation, the optimizer $\Phi$ satisfies the following:
    \begin{align*}
         \nabla \Phi \in\partial \|\m(x)\|, \indent \alpha \Phi \in \partial |c(x)|.
    \end{align*}
    We show the duality gap is zero using the proposition \ref{prop:l1}.
    \begin{align*}
        &\int_\Omega \|\m(x)\|\d x + \frac{1}{\alpha} \int_\Omega |c(x)| \d x + \int_\Omega \Phi(x) (\mu_1(x) - \mu_0(x) + \nabla \cdot \m(x) - c(x)) \d x\\
        &=\int_\Omega \|\m(x)\| - \nabla \Phi \cdot \m(x) \d x +  \int_\Omega \frac{1}{\alpha} |c(x)| - \Phi(x) c(x) \d x + \int_\Omega \Phi(x) (\mu_1(x) - \mu_0(x)) \d x\\
        &=\int_\Omega \Phi(x) (\mu_1(x) - \mu_0(x)) \d x
    \end{align*}
    This concludes the proof.
\end{proof}

\subsection{$L^2$ Generalized Unnormalized Wasserstein metric.}
Let $p=2$. From the definition (\ref{def:p}), we now consider
\begin{equation}\label{eq:uw2}
\begin{aligned}
    UW_2(\mu_0,\mu_1)^2
    = \inf_{v,\mu,f} \biggl\{ &\int^1_0 \int_\Omega \|\v(t,x)\|^2 \mu (t,x) \d x\d t + \frac{1}{\alpha} \int^1_0\int_\Omega \|f(t,x)\|^2 \d x\d t :\\
    &\partial_t \mu(t,x) + \nabla \cdot (\mu(t,x) \v(t,x)) = f(t,x),  t\in[0,1], x\in \Omega,\\
    &\mu(0,x) = \mu_0(x), \mu(1,x) = \mu_1(x)
    \biggl\}.
\end{aligned}
\end{equation}

\begin{proposition}\label{prop:l2}
    The $L^2$ generalized unnormalized Wasserstein metric is a well-defined metric function in $M(\Omega)$. In addition, the minimizer $(\v(t,x),\mu(t,x),f(t,x))$ for (\ref{eq:uw2}) satisfies
    \begin{align*}
        \v(t,x) = \nabla \Phi(t,x),\indent f(t,x) = \alpha \Phi(t,x),
    \end{align*}
    and
    \begin{align*}
        & \partial_t \mu(t,x) + \nabla \cdot (\mu(t,x) \nabla \Phi(t,x)) = \alpha \Phi(t,x)\\
        & \partial_t \Phi(t,x) + \frac{1}{2} \|\nabla \Phi(t,x)\|^2 \leq 0.
    \end{align*}
    In particular, if $\mu(t,x) >0$, then
    \begin{align*}
        \partial_t\Phi(t,x) + \frac{1}{2} \|\nabla \Phi(t,x)\|^2 = 0.
    \end{align*}
\end{proposition}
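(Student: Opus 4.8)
The plan is to separate the two assertions—that $UW_2$ is a genuine metric on $\cm(\Omega)$, and that the stated first-order system characterizes a minimizer—and to handle the metric axioms first, since they are largely structural. Nonnegativity is immediate because the action is a sum of two nonnegative integrals, and finiteness follows by exhibiting one admissible competitor of finite action: the linear interpolation $\mu(t,x)=(1-t)\mu_0(x)+t\mu_1(x)$ with $\v\equiv 0$ and $f=\mu_1-\mu_0$ satisfies the constraint and has action $\tfrac1\alpha\int_\Omega(\mu_1-\mu_0)^2\,\d x<\infty$. Symmetry I would get from the time reversal $t\mapsto 1-t$: setting $\bar\mu(t,x)=\mu(1-t,x)$, $\bar\v(t,x)=-\v(1-t,x)$, $\bar f(t,x)=-f(1-t,x)$ turns an admissible triple for $(\mu_0,\mu_1)$ into one for $(\mu_1,\mu_0)$ with the same action, since the integrands see $\v$ and $f$ only through $\|\v\|^2$ and $|f|^2$. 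For the identity of indiscernibles, if $UW_2(\mu_0,\mu_1)=0$ then both integrals vanish, so $f\equiv 0$ and $\mu\v=0$; the continuity equation then gives $\partial_t\mu=0$, whence $\mu_0=\mu_1$, and the converse is witnessed by the stationary path.

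The triangle inequality is the one metric axiom requiring real work, and I would prove it by concatenation with time reparametrization. Given near-optimal triples joining $\mu_0$ to $\mu_1$ and $\mu_1$ to $\mu_2$, with actions $E_1$ and $E_2$, I would run the first on $[0,\lambda]$ and the second on $[\lambda,1]$, rescaling time so that the continuity equation is preserved—this forces a factor $1/\lambda$ on $\v$ and $f$ in the first piece and $1/(1-\lambda)$ in the second. The two pieces agree at $\mu_1$, so the glued path is admissible for $(\mu_0,\mu_2)$, and a change of variables shows its action equals $E_1/\lambda+E_2/(1-\lambda)$. Minimizing over $\lambda\in(0,1)$ gives the value $(\sqrt{E_1}+\sqrt{E_2})^2$, attained at $\lambda=\sqrt{E_1}/(\sqrt{E_1}+\sqrt{E_2})$; taking square roots yields $UW_2(\mu_0,\mu_2)\le UW_2(\mu_0,\mu_1)+UW_2(\mu_1,\mu_2)$.

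For the optimality system I would pass to the momentum variable $\m=\mu\v$, so that the action becomes $\int_0^1\int_\Omega \|\m\|^2/\mu+\tfrac1\alpha f^2$ and is jointly convex in $(\mu,\m,f)$, and then form the Lagrangian with a multiplier $\Phi$ for the continuity equation. Integrating by parts in $t$ and in $x$—the spatial boundary term vanishing by the zero-flux condition—decouples the minimizations in $\m$ and in $f$, which become pointwise and explicit; with the multiplier normalized appropriately these give $\v=\nabla\Phi$ and $f=\alpha\Phi$, and substituting back into the constraint reproduces the stated continuity equation. What remains after this substitution is a term linear in $\mu$ whose coefficient is a negative multiple of $\partial_t\Phi+\tfrac12\|\nabla\Phi\|^2$. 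Because $\mu\ge 0$ is kept in the feasible set, a finite infimum requires this coefficient to be nonpositive everywhere, which is exactly $\partial_t\Phi+\tfrac12\|\nabla\Phi\|^2\le 0$, while complementary slackness forces equality at every point where $\mu>0$.

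The main obstacle is precisely this last dichotomy. The condition $\mu\ge 0$ is a state constraint that may be active (the density can vanish on open sets), so $\Phi$ solves the Hamilton--Jacobi equation only on the support of $\mu$ and is merely a subsolution off it; turning the formal KKT computation into a rigorous statement in this nonsmooth, infinite-dimensional setting—rather than simply reading off an Euler--Lagrange equation—is where the care is needed, and I expect it to be the delicate step of the argument.
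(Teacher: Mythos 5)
Your derivation of the optimality system is essentially the paper's own argument: pass to the momentum variable $\m=\mu\v$ so the action is jointly convex, introduce a multiplier $\Phi$ for the linear constraint $\partial_t\mu+\nabla\cdot\m=f$, and read off from the KKT conditions $\m/\mu=\nabla\Phi$ (hence $\v=\nabla\Phi$), $f=\alpha\Phi$, and the stationarity-in-$\mu$ inequality $-\|\m\|^2/(2\mu^2)-\partial_t\Phi\geq 0$, which together with the first relation is exactly $\partial_t\Phi+\tfrac{1}{2}\|\nabla\Phi\|^2\leq 0$, with equality on $\{\mu>0\}$ by complementary slackness. Where you go beyond the paper is the metric part: the paper's proof silently omits any verification that $UW_2$ is a metric, whereas you prove it, and your concatenation argument is the right one --- gluing near-optimal paths on $[0,\lambda]$ and $[\lambda,1]$ with the forced $1/\lambda$, $1/(1-\lambda)$ rescalings of $\v$ and $f$, then optimizing $E_1/\lambda+E_2/(1-\lambda)$ to get $(\sqrt{E_1}+\sqrt{E_2})^2$. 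That is the standard Benamou--Brenier-type proof of the triangle inequality and it adapts correctly here because the source term scales the same way as the velocity.

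One soft spot: your identity-of-indiscernibles step assumes the infimum is attained (``if $UW_2(\mu_0,\mu_1)=0$ then both integrals vanish''), but a priori you only have a minimizing sequence. The fix is routine: for a time-independent test function $\phi$,
\begin{equation*}
\int_\Omega \phi\,(\mu_1-\mu_0)\,\d x=\int_0^1\!\!\int_\Omega \bigl(\phi\, f_n+\nabla\phi\cdot \mu_n\v_n\bigr)\d x\,\d t,
\end{equation*}
and both terms are controlled by the action of the $n$-th competitor (for the transport term use Cauchy--Schwarz together with the bound $\int_\Omega\mu_n(t,\cdot)\,\d x\leq\int_\Omega\mu_0\,\d x+|\Omega|^{1/2}\bigl(\int_0^1\!\int_\Omega f_n^2\bigr)^{1/2}$, which follows from integrating the continuity equation). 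Letting the action tend to zero gives $\mu_0=\mu_1$ without assuming a minimizer exists. With that patch your argument is complete, and indeed more complete than the proof in the paper.
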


\begin{proof}
Denote $\m(t,x) = \mu(t,x)\v(t,x)$. Then the problem becomes
\begin{equation}\label{eq:UW2-m-mu}
\begin{split}
    \frac{1}{2}UW_2(\mu_0,\mu_1)^2 = \inf_{\m,\mu,f} \biggl\{ &\int^1_0 \int_\Omega \frac{\|\m(t,x)\|^2}{2\mu(t,x)} \d x\d t + \frac{1}{2\alpha} \int^1_0 \int_\Omega |f(t,x)|^2 \d x \d t:\\
   & \partial_t \mu(t,x) + \nabla\cdot \m(t,x) = f(t,x),\\ 
   &\mu(0,x) = \mu_0(x), \mu(1,x) = \mu_1(x),x\in \Omega, 0\leq t \leq 1 \biggl\}.
\end{split}
\end{equation}
Denote $\Phi(t,x)$ as a Lagrange multiplier. Consider the Lagrangian
\begin{align*}
    \cl (\m,\mu,f,\Phi) &= \int^1_0 \int_\Omega \frac{\|\m(t,x)\|^2}{2\mu(t,x)} \d x\d t + \frac{1}{2\alpha} \int^1_0 \int_\Omega |f(t,x)|^2 \d x \d t\\
    &+ \int^1_0 \int_\Omega \Phi(t,x) \Big( \partial_t \mu(t,x) + \nabla\cdot \m(t,x) - f(t,x) \Big) \d x \d t.
\end{align*}
From KKT condition $\delta_{\m} \cl=0, \delta_\mu \cl \geq 0, \delta_f \cl=0, \delta_\Phi \cl=0$, the minimizer satisfies the following properties:
\begin{align}
    &\frac{\m(t,x)}{\mu(t,x)} = \nabla \Phi(t,x)\label{prop:l2-eq1}\\
    &-\frac{\|\m(t,x)\|^2}{2\mu(t,x)^2} - \partial_t \Phi(t,x) \geq 0\label{prop:l2-eq2}\\
    &f(t,x) = \alpha \Phi(t,x)\nonumber\\
    &\partial_t \mu(t,x) + \nabla \cdot \m(t,x) - f(t,x) = 0\nonumber.
\end{align}
Combining (\ref{prop:l2-eq1}) and (\ref{prop:l2-eq2}) yields: 
$
    \partial_t \Phi(t,x) + \frac{1}{2} \|\nabla \Phi(t,x)\|^2 \leq 0.
$
\end{proof}

We next derive the corresponding Monge problem for unnormalized optimal transport with a spatially dependent source function. We note that the following derivations are formal in Eulerian coordinates of fluid dynamics. The related rigorous proof can be shown in Lagrangian coordinates similar as the one in \cite{Villani2009_optimal}.

\begin{proposition}[Generalized Unnormalized Monge problem]
    \begin{equation}
        \begin{split}
            &UW_2(\mu_0,\mu_1)^2 = \inf_{M, f(t,x)} \int_\Omega \|M(x) - x\|^2 \mu_0(x) \d x + \alpha \int^1_0 \int_\Omega |f(t,x)|^2 \d x \d t\\
            & + \int_\Omega \int^1_0 \int^t_0 f\biggl(s,sM(x) + (1-s)x\biggl) \|M(x) - x\|^2 Det\biggl(s\nabla M(x) + (1-s) \II\biggl) \d s \d t \d x
        \end{split}
    \end{equation}
    where $M:\Omega \rightarrow \Omega$ is an invertible mapping function and $f:\Omega \times [0,1] \rightarrow \RR$ is a spatially dependent source function. The unnormlized push forward relation holds
    \begin{equation}\label{eq:monge-push-forward}
        \begin{split}
        &\mu(1,M(x))Det(\nabla M(x))\\
        &= \mu(0,x) + \int^1_0 f\biggl(t,tM(x) + (1-t)\II\biggl) Det\biggl(t\nabla M(x) + (1-t)\II\biggl) \d t.
        \end{split}
    \end{equation}
\end{proposition}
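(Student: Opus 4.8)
The plan is to pass from the Eulerian fluid-dynamic formulation \eqref{eq:uw2} to Lagrangian coordinates along the characteristics of the optimal flow, mirroring the classical Benamou--Brenier reduction to the Monge problem, while carefully accounting for the extra mass that the source $f$ injects along each trajectory. The point of departure is Proposition \ref{prop:l2}: at a minimizer the velocity is $\v = \nabla\Phi$, and where $\mu>0$ the potential solves the Hamilton--Jacobi equation $\partial_t\Phi + \frac12\|\nabla\Phi\|^2 = 0$. First I would use this to show that the characteristics are straight lines. Defining the flow map $X(t,x)$ by $\partial_t X(t,x) = \nabla\Phi(t,X(t,x))$ with $X(0,x)=x$, differentiating along a trajectory gives $\frac{d}{dt}\nabla\Phi(t,X(t,x)) = \nabla\big(\partial_t\Phi + \frac12\|\nabla\Phi\|^2\big) = 0$, so $\nabla\Phi$ is constant in $t$ along each characteristic. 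Setting $M(x) := X(1,x)$ then yields $X(t,x) = (1-t)x + t M(x)$ and $\v(t,X(t,x)) = M(x)-x$.

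The second step propagates the density along these trajectories in the presence of the source. Writing $J(t,x) := \det\nabla_x X(t,x) = \det\big(t\nabla M(x) + (1-t)\II\big)$ and $\rho(t,x) := \mu(t,X(t,x))\,J(t,x)$, I would combine Liouville's formula $\frac{d}{dt}J = (\nabla\cdot\v)(t,X)\,J$ with the continuity equation $\partial_t\mu + \nabla\cdot(\mu\v)=f$ to get the transport identity $\frac{d}{dt}\rho(t,x) = f(t,X(t,x))\,J(t,x)$. Integrating in $t$ over $[0,1]$ and using $J(0,x)=1$ and $X(1,x)=M(x)$ reproduces exactly the unnormalized push-forward relation \eqref{eq:monge-push-forward}, while integrating only up to time $t$ gives the running mass $\rho(t,x) = \mu_0(x) + \int_0^t f(s,X(s,x))\,J(s,x)\,\d s$.

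The third step is the change of variables $y = X(t,x)$ in the two terms of \eqref{eq:uw2}. In the kinetic term one has $\d y = J(t,x)\,\d x$, the integrand $\|\v(t,y)\|^2 = \|M(x)-x\|^2$ is constant in $t$, and $\mu(t,y)\,J = \rho$; substituting the running-mass formula splits this term into $\int_\Omega \|M(x)-x\|^2\mu_0(x)\,\d x$ plus the triple integral $\int_\Omega\int_0^1\int_0^t f\big(s,sM(x)+(1-s)x\big)\|M(x)-x\|^2\det\big(s\nabla M(x)+(1-s)\II\big)\,\d s\,\d t\,\d x$, which are precisely the first and third terms of the asserted functional; the source penalty is left in Eulerian coordinates and carries over directly from \eqref{eq:uw2}. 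To conclude that the two infima coincide I would argue by a two-sided comparison: the computation above shows the value at a dynamic minimizer equals the Monge cost of the induced pair $(M,f)$, giving $UW_2(\mu_0,\mu_1)^2 \ge$ the Monge infimum; conversely, any admissible $(M,f)$, i.e. one satisfying \eqref{eq:monge-push-forward}, can be lifted to an admissible dynamic triple via the linear interpolation $X(t,x)=(1-t)x+tM(x)$, $\v(t,X(t,x))=M(x)-x$, with $\mu$ reconstructed from $\rho$, and the same change of variables shows its dynamic cost equals the Monge cost, yielding the reverse inequality.

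The main obstacle is that this entire reduction is only formal, as the statement already warns. It presupposes that $M$ is a diffeomorphism so that $J>0$ and the change of variables is legitimate, that $\mu$ remains strictly positive so that the Hamilton--Jacobi equation holds with equality rather than as the inequality in Proposition \ref{prop:l2}, and that $\Phi$ and the flow are regular enough to differentiate along characteristics. Making this rigorous is cleanest in genuine Lagrangian coordinates, along the lines of the Monge--Brenier theory in \cite{Villani2009_optimal}. The essential new difficulty over the balanced case is that the source destroys mass conservation along trajectories, so the classical Jacobian identity $\mu(t,X)\,J=\mu_0$ must be replaced by the integrated transport identity for $\rho$, and one must check that the reconstructed density stays nonnegative throughout the interpolation.
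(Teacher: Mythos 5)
Your proposal is correct and follows essentially the same route as the paper's proof: pass to Lagrangian coordinates, use the Hamilton--Jacobi equation from Proposition \ref{prop:l2} to show the characteristics are straight lines $X_t(x)=(1-t)x+tM(x)$, combine Liouville's formula with the unnormalized continuity equation to get $\frac{\d}{\d t}\bigl(\mu(t,X_t(x))\,\mathrm{Det}(\nabla X_t(x))\bigr)=f(t,X_t(x))\,\mathrm{Det}(\nabla X_t(x))$, and change variables in the kinetic term. Your explicit two-sided comparison of infima and your remarks on the formal nature of the reduction (positivity of $\mu$, invertibility of $M$) go slightly beyond the paper's one-directional derivation, but the underlying argument is the same.
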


\begin{proof}
We derive the Lagrange formulation of the unnormalized optimal transport with $p=2$. Consider a mapping function $X_t(x)$ with vector field $\v(t, X_t(x))$ satisfying
\begin{align}
    \frac{\d}{\d t} X_t(x) = \v(t, X_t(x)), \indent X_0(x) = x.
\end{align}
Then
\begin{align}
        \int_\Omega \int^1_0 \|\v(t,x)\|^2 \mu(t,x) \d t \d x &=
            \int_\Omega \int^1_0 \|\v(t, X_t(x))\|^2 \mu(t,X_t(x)) Det(\nabla X_t(x)) \d x \d t\nonumber\\
            &=
            \int_\Omega \int^1_0 \|    \frac{\d}{\d t}  X_t(x)\|^2 \mu(t,X_t(x)) Det(\nabla X_t(x)) \d x \d t.\label{eq:monge}
\end{align}
Define $J(t,x) := \mu(t,X_t(x))Det\bigl(\nabla X_t(x)\bigl)$. Differentiate $J(t,x)$ with respect to $t$,

\begin{align*}
    \frac{\d}{\d t} J(t,x) &= \frac{\d}{\d t} \biggl\{ \mu(t,X_t(x)) Det(\nabla X_t(x)) \biggl\}\\
    &= \partial_t \mu(t,X_t(x))Det(\nabla X_t(x)) + \nabla_X \mu(t,X_t(x))\cdot \frac{\d}{\d t} X_t(x) Det(\nabla X_t(x))\\
    &\hspace{1cm}+ \mu(t,X_t(x)) \partial_t Det(\nabla X_t(x))\\
    &= \partial_t \mu(t,X_t(x))Det(\nabla X_t(x)) + \nabla_X \mu(t,X_t(x))\cdot \frac{\d}{\d t} X_t(x) Det(\nabla X_t(x))\\
    &\hspace{1cm}+ \mu(t,X_t(x)) \nabla \cdot \v(t,X_t(x)) Det(\nabla X_t(x))\\
    &= \biggl( \partial_t\mu + \v \cdot \nabla \mu + \mu \nabla \cdot \v \biggl)(t,X_t(x)) Det(\nabla X_t(x))\\
    &= \biggl( \partial_t\mu + \nabla \cdot(\mu \v) \biggl)(t,X_t(x)) Det(\nabla X_t(x))\\
    &= f\bigl(t,X_t(x)\bigl) Det(\nabla X_t(x)).
\end{align*}
Denote
\begin{align*}
    J(t,x) = J(0,x) + \int^t_0    \frac{\d}{\d s} J(s,x)\d s.
\end{align*}
Since $X_0(x) = x$ and $\nabla X_0(x) = \II$, then $J(0,x) = \mu(0,x)$. This yields
\begin{align*}
    \mu(t,X_t(x)) Det(\nabla X_t(x)) = \mu(0,x) + \int^t_0 f\bigl(s,X_s(x)\bigl) Det (\nabla X_s(x)) \d s.
\end{align*}
Since the minimizer in Eulerian coordinates satisfies the Hamilton-Jacobi equation:
\begin{equation*}
    \partial_t\Phi(t,x) + \frac{1}{2}\|\nabla \Phi(t,x)\|^2 = 0,
\end{equation*}
and $\frac{\d}{\d t} X_t(x) = \nabla \Phi(t, X_t(x))$, then we have $    \frac{\d^2}{\d t^2} X_t(x)=0$. This implies
\begin{equation*}
    \frac{\d}{\d t}X_t(x) = \v(t,X_t(x)) = M(x) - x,
\end{equation*}
thus $X_t(x) = (1-t)x + t M(x)$ and $Det(\nabla X_t(x))=Det((1-t)\II + t\nabla M(x)).$
Substitute all the above into (\ref{eq:monge}):
\begin{align*}
    (\ref{eq:monge}) &= \int^1_0 \int_\Omega \|\frac{\d}{\d t}X_t(x)\|^2 J(t,x)\d x \d t\\
    &= \int^1_0 \int_\Omega \|M(x) - x \|^2 \biggl( J(0,x) + \int^t_0    \frac{\d}{\d s} J(s,x)ds\biggl) \d x \d t\\
    &= \int^1_0 \int_\Omega \|M(x) - x\|^2 \mu(0,x) \d x \d t \\
    &+ \int^1_0 \int_\Omega \|M(x) - x \|^2 \int^t_0 f\bigl(s,X_s(x)\bigl) Det(\nabla X_s(x))\d s \d x \d t\\
    &= \int_\Omega \|M(x) - x\|^2 \mu(0,x) dx \\
    &+ \int^1_0 \int^t_0 \int_\Omega \|M(x) - x\|^2 f\biggl(s,sM(x)+(1-s)x\biggl) Det\biggl((1-s)\II + s\nabla M(x)\biggl) \d x \d s \d t.
\end{align*}
This concludes the derivation.
\end{proof}

We next find the relation between the spatially dependent source function $f(t,x)$ and the mapping function $M(x)$. For the simplicity of presentation, here we assume the periodic boundary conditions on $\Omega$. 

\begin{proposition}[Generalized Unnormalized Monge-Amp\`ere equation]
    The optimal mapping function $M(x) = \nabla \Psi(x)$ satisfies the following unnormalized Monge-Amp\`ere equation
   \begin{equation}\label{NMA}
    \begin{split}
        &\mu(1,\nabla\Psi(x))Det(\nabla^2 \Psi(x)) - \mu(0,x) \\
        &= \alpha \int^1_0  \biggl(\Psi(x) - \frac{\|x\|^2}{2} + \frac{t\|\nabla \Psi(x) - x\|^2}{2}\biggl) Det\Big(t\nabla^2\Psi(x) + (1-t)\II\Big) \d t.
    \end{split}
    \end{equation}
\end{proposition}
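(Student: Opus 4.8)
The plan is to combine the unnormalized push-forward relation \eqref{eq:monge-push-forward} with the identification $f=\alpha\Phi$ from Proposition \ref{prop:l2}, and then to evaluate the Lagrange multiplier $\Phi$ explicitly along the transport trajectories. Writing $M(x)=\nabla\Psi(x)$ as assumed, I would first recall from the derivation of the Monge problem that the trajectory through $x$ is the straight line $X_t(x)=(1-t)x+t\nabla\Psi(x)$, that the velocity $\frac{\d}{\d t}X_t(x)=\nabla\Psi(x)-x$ is constant in $t$ (because $\frac{\d^2}{\d t^2}X_t=0$), and that $\nabla\Phi(t,X_t(x))=\frac{\d}{\d t}X_t(x)=\nabla\Psi(x)-x$. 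The key quantity to compute is $\Phi$ itself, not merely its gradient, evaluated at the moving point $X_t(x)$, since the source appearing in \eqref{eq:monge-push-forward} is $f(t,X_t(x))=\alpha\Phi(t,X_t(x))$.

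The main computation is to integrate $\Phi$ along a characteristic. Differentiating and using the Hamilton--Jacobi equation $\partial_t\Phi+\frac12\|\nabla\Phi\|^2=0$ of Proposition \ref{prop:l2} gives
\begin{equation*}
\frac{\d}{\d t}\Phi(t,X_t(x))=\partial_t\Phi+\nabla\Phi\cdot\frac{\d}{\d t}X_t=-\tfrac12\|\nabla\Phi\|^2+\|\nabla\Phi\|^2=\tfrac12\|\nabla\Psi(x)-x\|^2,
\end{equation*}
where in the last step I use that $\|\nabla\Phi(t,X_t(x))\|=\|\nabla\Psi(x)-x\|$ is independent of $t$. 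Since the right-hand side is constant in $t$, integrating from $0$ to $t$ yields $\Phi(t,X_t(x))=\Phi(0,x)+\frac{t}{2}\|\nabla\Psi(x)-x\|^2$. It remains to pin down $\Phi(0,x)$: from $\nabla\Phi(0,x)=\nabla\Psi(x)-x=\nabla\bigl(\Psi(x)-\tfrac12\|x\|^2\bigr)$ I would set $\Phi(0,x)=\Psi(x)-\frac{\|x\|^2}{2}$, absorbing the harmless additive constant into the normalization of $\Psi$. This produces the closed form
\begin{equation*}
f\bigl(t,tM(x)+(1-t)x\bigr)=\alpha\Phi(t,X_t(x))=\alpha\Bigl(\Psi(x)-\frac{\|x\|^2}{2}+\frac{t}{2}\|\nabla\Psi(x)-x\|^2\Bigr).
\end{equation*}

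Finally I would substitute this expression, together with $\nabla M(x)=\nabla^2\Psi(x)$ and $Det(t\nabla M(x)+(1-t)\II)=Det(t\nabla^2\Psi(x)+(1-t)\II)$, directly into the push-forward relation \eqref{eq:monge-push-forward}, namely $\mu(1,\nabla\Psi(x))Det(\nabla^2\Psi(x))=\mu(0,x)+\int_0^1 f(t,X_t(x))\,Det(t\nabla^2\Psi(x)+(1-t)\II)\,\d t$; rearranging gives exactly \eqref{NMA}. The step I expect to be delicate is the justification of the additive constant in $\Phi(0,x)$ and, more broadly, the standing assumption that the optimal map is a gradient $M=\nabla\Psi$: these are genuinely Lagrangian, Brenier-type statements and, as the authors already caution, remain formal at the Eulerian level used here. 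Once the explicit form of $\Phi$ along trajectories is established, the remainder is a direct substitution.
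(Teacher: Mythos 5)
Your argument is correct and shares the paper's skeleton: obtain a closed form for $\Phi$ along the straight-line trajectories $X_t(x)=(1-t)x+tM(x)$, set $f=\alpha\Phi$, and substitute into the push-forward relation (\ref{eq:monge-push-forward}). The one step you do differently is how that closed form is obtained. The paper quotes the Hopf-Lax formula, $\Phi(t,X_t(x))=\Phi(0,x)+\frac{\|X_t(x)-x\|^2}{2t}$, and extracts two things from it: the value of $\Phi$ along trajectories \emph{and}, via the first-order optimality condition $\nabla\Phi(0,x)+x-M(x)=0$, the gradient structure $M=\nabla\Psi$ with the specific normalization $\Psi(x)=\Phi(0,x)+\frac{\|x\|^2}{2}$, so the gradient form of the map is derived rather than assumed. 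You instead integrate the Hamilton-Jacobi equation along characteristics, $\frac{\d}{\d t}\Phi(t,X_t(x))=\frac{1}{2}\|\nabla\Psi(x)-x\|^2$, which yields the identical expression $\Phi(t,X_t(x))=\Phi(0,x)+\frac{t}{2}\|\nabla\Psi(x)-x\|^2$ without invoking Hopf-Lax; the price is that $M=\nabla\Psi$ must be taken as a hypothesis (as the proposition's phrasing permits) and the additive normalization of $\Psi$ must be fixed by hand. You do this and correctly flag it: since (\ref{NMA}) contains $\Psi$ itself rather than only $\nabla\Psi$, the normalization $\Phi(0,x)=\Psi(x)-\frac{\|x\|^2}{2}$ is not cosmetic --- it is exactly the choice under which the equation holds --- but it coincides with the choice the paper makes implicitly by defining $\Psi$ from $\Phi(0,\cdot)$, so both proofs establish the same statement. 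The final substitution into (\ref{eq:monge-push-forward}) is identical in both, and your version has the modest advantage of being self-contained at the level of the HJ equation already proved in Proposition \ref{prop:l2}.
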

\begin{proof}
    From the Hopf-Lax formula for the Hamilton-Jacobi equation,
    \begin{equation*}
        \Phi(1,M(x)) = \Phi(0,x) + \frac{\|M(x) - x\|^2}{2}.
    \end{equation*}
    Thus $\nabla \Phi(0,x) + x - M(x) = 0$. We further denote $\Psi(x) = \Phi(0,x) + \frac{\|x\|^2}{2}$, then $M(x) = \nabla \Psi(x)$. From $X_t(x) = (1-t)x + tM(x)$, then
    \begin{align*}
        \Phi(t,X_t(x)) &= \Phi(0,x) + \frac{\|X_t(x) - x\|^2}{2t}\\
            &= \Phi(0,x) + \frac{t\|M(x) - x\|^2}{2}\\
            &= \Psi(x) - \frac{\|x\|^2}{2} + \frac{t\|\nabla\Psi(x) - x\|^2}{2}
    \end{align*}
    and
    \begin{equation*}
       \nabla X_t(x) = (1-t)\II + t\nabla^2 \Psi(x). 
    \end{equation*}
    Substituting $f(t,x) = \alpha \Phi(t,x)$ and $M(x) = \nabla \Psi(x)$ into (\ref{eq:monge-push-forward}), we get
    \begin{align*}
        &\mu(1,\nabla\Psi(x))Det(\nabla^2\Psi(x)) - \mu(0,x) \\
        &= \int^1_0 \alpha \biggl(\Psi(x) - \frac{\|x\|^2}{2} + \frac{t\|\nabla \Psi(x) - x\|^2}{2}\biggl) Det\Big(t\nabla^2\Psi(x) + (1-t)\II\Big) \d t.
    \end{align*}
\end{proof}
Now, we show the Kantorovich formulation of the problem (\ref{eq:uw2}).
\begin{proposition}[$L^2$ Generalized Unnormalized Kantorovich formulation]
\vspace{0.5cm}
The unnormalized Kantorovich formulation with $f(t,x)$ satisfies
\begin{align*}
    \frac{1}{2}UW_2(\mu_0, \mu_1)^2 
    = \sup_\Phi \biggl\{
        \int_\Omega \biggl( \Phi(1,x) \mu_1(x) - \Phi(0,x) \mu_0(x)\biggl) \d x - \frac{\alpha}{2}   \int^1_0 \int_{\Omega}\Phi(t,x)^2 \d x\d t
    \biggl\},
\end{align*}
where the supremum is taken among all $\Phi:[0,1] \times \Omega \rightarrow \mathbb{R}$ satisfying
\begin{align*}
    \partial_t \Phi(t,x) + \frac{1}{2} \|\nabla \Phi(t,x) \|^2 \leq 0.
\end{align*}
\end{proposition}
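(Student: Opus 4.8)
The plan is to reproduce the Lagrangian duality computation of Proposition~\ref{prop:l2}, but now carrying the multiplier all the way through rather than only reading off the pointwise KKT relations. Starting from the convex reformulation \eqref{eq:UW2-m-mu} in the variables $(\m,\mu,f)$, I would introduce the same Lagrangian $\cl(\m,\mu,f,\Phi)$ with $\Phi(t,x)$ the multiplier of the continuity constraint. Because the inner supremum over $\Phi$ returns $+\infty$ unless the continuity equation holds, the primal value equals $\inf_{\m,\mu,f}\sup_\Phi\cl = \tfrac12 UW_2(\mu_0,\mu_1)^2$, and the minimax inequality $\inf\sup\cl\ge\sup\inf\cl$ reduces the claim (in one direction) to evaluating the inner infimum $\inf_{\m,\mu,f}\cl$ for fixed $\Phi$.

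To evaluate that infimum I would integrate the constraint term by parts. In $t$ this produces the endpoint contribution $\Phi(1,x)\mu_1(x)-\Phi(0,x)\mu_0(x)$ together with $-\int_0^1\partial_t\Phi\,\mu\,\d t$, and in $x$ it produces $-\int_\Omega\nabla\Phi\cdot\m\,\d x$, the spatial boundary term dropping out by the zero-flux condition on $\m$. The Lagrangian then separates into pointwise minimizations: minimizing $\tfrac{1}{2\alpha}|f|^2-\Phi f$ over $f$ gives $f=\alpha\Phi$ and value $-\tfrac{\alpha}{2}\Phi^2$, while minimizing $\tfrac{\|\m\|^2}{2\mu}-\nabla\Phi\cdot\m$ over $\m$ gives $\m=\mu\nabla\Phi$ and value $-\tfrac{\mu}{2}\|\nabla\Phi\|^2$. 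Collecting everything leaves
\begin{equation*}
\inf_{\m,f}\cl=\int_\Omega\bigl(\Phi(1,x)\mu_1-\Phi(0,x)\mu_0\bigr)\,\d x-\frac{\alpha}{2}\int_0^1\!\int_\Omega\Phi^2\,\d x\,\d t-\int_0^1\!\int_\Omega\mu\Bigl(\partial_t\Phi+\tfrac12\|\nabla\Phi\|^2\Bigr)\,\d x\,\d t.
\end{equation*}

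The final minimization over the interior density $\mu\ge0$ is what generates the admissibility constraint. If $\partial_t\Phi+\tfrac12\|\nabla\Phi\|^2>0$ on a set of positive measure, sending $\mu\to+\infty$ there drives the infimum to $-\infty$; otherwise the last integral is nonnegative and is minimized to $0$ by $\mu=0$. Hence $\inf_{\m,\mu,f}\cl$ is finite exactly on the set of $\Phi$ satisfying the Hamilton--Jacobi inequality $\partial_t\Phi+\tfrac12\|\nabla\Phi\|^2\le0$, and there it coincides with the claimed dual functional. Taking $\sup_\Phi$ then delivers the $\ge$ inequality in the statement.

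For the reverse inequality, and hence for the zero duality gap, I would exhibit an explicit optimal $\Phi$. By Proposition~\ref{prop:l2} the primal minimizer satisfies $\v=\nabla\Phi$, $f=\alpha\Phi$ with $\Phi$ obeying the Hamilton--Jacobi inequality, so this $\Phi$ is dual-feasible; substituting $\m=\mu\nabla\Phi$ and $f=\alpha\Phi$ into the primal objective and integrating against $\Phi$ by parts, the objective collapses onto the dual functional precisely when the cross term $\int_0^1\int_\Omega\mu\bigl(\partial_t\Phi+\tfrac12\|\nabla\Phi\|^2\bigr)\d x\,\d t$ vanishes. This is exactly the complementarity supplied by the last assertion of Proposition~\ref{prop:l2}: wherever $\mu>0$ the Hamilton--Jacobi equation holds with equality, so the integrand is identically zero. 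I expect the genuine obstacle to be not this formal computation but the rigorous justification of the minimax exchange and the degenerate behavior of the perspective functional $\|\m\|^2/(2\mu)$ on $\{\mu=0\}$; a fully rigorous argument would route through Fenchel--Rockafellar duality (the objective is jointly convex, the constraint affine) with a suitable constraint qualification, whereas at the formal level of the paper the explicit primal--dual pair above already certifies that the gap is zero.
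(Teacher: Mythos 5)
Your proposal is correct and follows essentially the same route as the paper: Lagrangian duality via the minimax inequality, integration by parts to produce the endpoint terms $\Phi(1,x)\mu_1-\Phi(0,x)\mu_0$, pointwise minimization in $\m$ and $f$ (giving $\m=\mu\nabla\Phi$, $f=\alpha\Phi$), the Hamilton--Jacobi inequality emerging as the dual constraint from the sign condition $\mu\ge 0$, and zero duality gap certified by substituting the primal optimizer of Proposition~\ref{prop:l2} and using the complementarity that the Hamilton--Jacobi equation holds wherever $\mu>0$. Your explicit blow-up argument for the $\mu$-infimum and your closing remarks on the minimax exchange are somewhat more careful than the paper's presentation, but the underlying argument is the same.
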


\begin{proof}
 We introduce a Lagrange multiplier $\Phi(t,x)$ to reformulate the equation (\ref{eq:UW2-m-mu}).
\begin{align*}
    &\frac{1}{2} UW_2(\mu_0,\mu_1)^2\\
    &= \inf_{\m,\mu,f} \sup_\Phi
        \biggl\{ 
            \int^1_0 \int_\Omega \frac{\|\m(t,x)\|^2}{2\mu(t,x)} + \frac{1}{2\alpha} f(x,t)^2 + \Phi(t,x)\bigl( \partial_t \mu(t,x) + \nabla \cdot \m(t,x) - f(t,x) \bigl) \d x \d t
        \biggl\}\\
    &\geq  \sup_\Phi \inf_{\m,\mu,f}
        \biggl\{ 
            \int^1_0 \int_\Omega \frac{\|\m(t,x)\|^2}{2\mu(t,x)} + \frac{1}{2\alpha} f(x,t)^2 + \Phi(t,x)\bigl( \partial_t \mu(t,x) + \nabla \cdot \m(t,x) - f(t,x) \bigl) \d x \d t
        \biggl\}\\
    &=  \sup_\Phi \inf_{\m,\mu,f}
        \biggl\{ 
            \int^1_0 \int_\Omega \frac{\|\m(t,x)\|^2}{2\mu(t,x)}
            - \nabla \Phi(t,x) \cdot \m(t,x)
            + \frac{1}{2\alpha} f(x,t)^2 
            + \Phi(t,x) \cdot \bigl( \partial_t \mu(t,x) - f(t,x) \bigl) \d x \d t
        \biggl\}\\
    &=  \sup_\Phi \inf_{\m,\mu,f}
        \biggl\{ 
            \int^1_0 \int_\Omega \frac{1}{2}\biggl\|\frac{\m(t,x)}{\mu(t,x)} - \nabla \Phi(t,x)\biggl\|^2 \mu(t,x) - \frac{1}{2}\|\nabla\Phi(t,x)\|^2 \mu(t,x)\d x\d t\\
        & \indent\indent + \int_\Omega \Phi(1,x)\mu_1(x) - \Phi(0,x)\mu_0(x) \d x\\
        & \indent\indent + \int^1_0\int_\Omega -\mu(t,x) \partial_t\Phi(t,x) + \frac{1}{2\alpha} f(t,x)^2 - \Phi(t,x) f(t,x)) \d x\d t
        \biggl\}.
\end{align*}
By the Proposition \ref{prop:l2}, the minimizer $\m$ satisfies $\displaystyle \frac{\m(t,x)}{\mu(t,x)} = \nabla \Phi(t,x)$. Thus, 
\begin{align*}
    &= \sup_\Phi \biggl\{
            \int_\Omega \biggl( \Phi(1,x) \mu_1(x) - \Phi(0,x) \mu_0(x)\biggl) \d x\\
            &\indent\indent + \inf_\mu \int^1_0 \int_\Omega -\mu(t,x) \biggl( \partial_t \Phi(t,x) + \frac{1}{2} \|\nabla\Phi(t,x)\|^2 \biggl) \d x\d t\\
            &\indent\indent + \inf_f \int^1_0 \int_\Omega \frac{1}{2\alpha} f(t,x)^2 - \Phi(t,x) f(t,x) \d x\d t
        \biggl\}\\
    &= \sup_\Phi \biggl\{
        \int_\Omega \biggl( \Phi(1,x) \mu_1(x) - \Phi(0,x) \mu_0(x)\biggl) \d x\\
        &\indent\indent + \inf_\mu \int^1_0 \int_\Omega -\mu(t,x) \biggl( \partial_t \Phi(t,x) + \frac{1}{2} \|\nabla\Phi(t,x)\|^2 \biggl) \d x\d t\\
        &\indent\indent + \inf_f \int^1_0 \int_\Omega \frac{1}{2\alpha}
        \biggl(
            f(t,x) - \alpha \Phi(t,x)
        \biggl)^2
        \d x\d t - \frac{\alpha}{2}   \int^1_0 \int_{\Omega}\Phi(t,x)^2 \d x\d t \biggl\}.
\end{align*}
Again from Proposition \ref{prop:l2}, the minimizer satisfies $f(t,x) = \alpha \Phi(t,x)$. With the assumption $\mu(t,x) \geq 0$ for all $t\in[0,1]$ and $x\in\Omega$, the problem can be written with a constraint.
\begin{align*}
    \frac{1}{2} UW_2(\mu_0,\mu_1)^2 = \sup_\Phi \biggl\{
        \int_\Omega \biggl( \Phi(1,x) \mu_1(x) - \Phi(0,x) \mu_0(x)\biggl) \d x - \frac{\alpha}{2}   \int^1_0 \int_{\Omega}\Phi(t,x)^2 \d x\d t: &\\
        \partial_t \Phi(t,x) + \frac{1}{2} \|\nabla \Phi(t,x) \|^2 \leq 0&
    \biggl\}.
\end{align*}

\noindent We next show that the primal-dual gap is zero. 
\begin{align*}
    &\int^1_0\int_\Omega \frac{\m(t,x)^2}{2\mu(t,x)} + \frac{1}{2\alpha} f(t,x)^2 \d x\d t\\
    &= \int^1_0 \int_\Omega \frac{1}{2}\|\nabla \Phi\|^2\mu(t,x)\d x\d t + \frac{\alpha}{2} \int^1_0 \int_\Omega \Phi(t,x)^2 \d x\d t\\
    &= \int^1_0 \int_\Omega \biggl(
        -\frac{1}{2} \|\nabla \Phi(t,x)\|^2 \mu(t,x) + \|\nabla \Phi(t,x)\|^2 \mu(t,x) + \frac{\alpha}{2} \Phi(t,x)^2
    \biggl) \d x\d t\\
    &= \int^1_0\int_\Omega \partial_t \Phi(t,x) \mu(t,x) + \Phi(t,x) \biggl( - \nabla \cdot \bigl(\mu(t,x)\nabla\Phi(t,x) \bigl)\biggl) 
    + \frac{\alpha}{2} \Phi(t,x)^2 \d x\d t\\
    &= \int_\Omega \Phi(1,x)\mu_1(x) - \Phi(0,x)\mu_0(x) \d x \\
    &\indent\indent -\int^1_0\int_\Omega \Phi(t,x) \biggl( \partial_t \mu(t,x) + \nabla \cdot \bigl(\mu(t,x) \nabla \Phi(t,x)\bigl)\biggl) \d x\d t + \frac{\alpha}{2} \int^1_0 \int_\Omega \Phi(t,x)^2 \d x \d t\\
    &= \int_\Omega \Phi(1,x)\mu_1(x) - \Phi(0,x)\mu_0(x) \d x \\
    &\indent\indent -\int^1_0\int_\Omega \Phi(t,x)f(t,x) \d x\d t + \frac{\alpha}{2} \int^1_0 \int_\Omega \Phi(t,x)^2 \d x \d t.
\end{align*}

\noindent Using $f(t,x) = \alpha \Phi(t,x)$, we get
\begin{align*}
    \frac{1}{2} UW_2(\mu_0,\mu_1)^2= \int_\Omega \Phi(1,x)\mu(1,x) - \Phi(0,x)\mu(0,x) \d x - \frac{\alpha}{2} \int^1_0 \int_\Omega \Phi(t,x)^2 \d x \d t.
\end{align*}
This concludes the proof.
\end{proof} 

\begin{remark}
We note that our results and proofs follow directly from the those used in \cite{gangbo2019unnormalized}. The major difference between  \cite{gangbo2019unnormalized} and our paper is that in the case of spatial independent source function, $f(t)= \frac{\alpha}{|\Omega|}\int \Phi(t,x)dx$, while in the case of spatial dependent source function, $f(t,x)=\alpha\Phi(t,x)$. This difference remains in the corresponding Monge problem and Kantorvich problem. In particular, we obtain a new spatial dependent unnormalized Monge-Amp\`ere equation \eqref{NMA}. 
\end{remark}


\section{Numerical methods}\label{section:numerical-methods}
In this section, we propose a Nesterov accelerated gradient descent method to solve $L^2$ unnormalized OT. In addition, we design a primal-dual hybrid gradient method to solve $L^1$ unnormalized OT.


\subsection{$L^2$ Generalized Unnormalized Wasserstein metric}
In this section, we present a new numerical implementation for $L^2$ unnormalized Wasserstein metric. We obtain a unconstrained version of the problem by plugging the PDE constraint into the objective function. Then the accelerated Nesterov gradient descent method is applied to solve the problem. We show that each iteration involves a simple elliptic equation where fast solvers can be applied. This novel numerical method can also be used in normalized optimal transport and unnormalized optimal transport with a spatially independent source function $f(t)$.

Using Proposition \ref{prop:l2}, we can rewrite the equation (\ref{eq:UW2-m-mu}) as follows:
\begin{equation*}
    \begin{split}
        UW_2(\mu_0,\mu_1)^2
            = \inf_{\Phi,\mu} \Biggl\{  &\int^1_0 \int_\Omega \|\nabla \Phi(t,x)\|_2^2 \mu (t,x) \d x \d t + \alpha \int^1_0\int_\Omega |\Phi(t,x)|^2 \d x\d t:\\
            & \partial_t \mu(t,x) + \nabla \cdot(\mu(t,x) \nabla \Phi(t,x)) = \alpha \Phi(t,x),\\
            &\mu(0,x) = \mu_0(x), \mu(1,x) = \mu_1(x)
            \Biggl\}.
    \end{split}    
\end{equation*}

Define an operator $L_\mu = - \nabla \cdot (\mu \nabla)$. The constraint $\partial_t \mu - L_\mu \Phi = \alpha\Phi$ leads to
\begin{equation}\label{formula:L-operator}
\Phi = (L_\mu + \alpha Id)^{-1} \partial_t \mu.
\end{equation}


\noindent With (\ref{formula:L-operator}), the minimization problem can be reformulated as
\begin{equation}\label{UW-modified-main-prob}
\begin{split}
    UW_2(\mu_0,\mu_1)^2
        = \inf_{\mu} \Biggl\{ & \int^1_0 \int_\Omega \mu \|\nabla (L_\mu + \alpha Id)^{-1} \partial_t \mu\|_2^2 \d x\d t + \alpha \int^1_0\int_\Omega | (L_\mu + \alpha Id)^{-1} \partial_t \mu |^2 \d x\d t:\\
        &\mu(0,x) = \mu_0(x), \mu(1,x) = \mu_1(x)
        \Biggl\}.
    \end{split}
\end{equation}
\noindent Using integration by parts,
\begin{align*}
    &\int^1_0 \int_\Omega \mu \|\nabla (L_\mu + \alpha Id)^{-1} \partial_t \mu\|^2 \d x\d t + \alpha \int^1_0\int_\Omega | (L_\mu + \alpha Id)^{-1} \partial_t \mu |^2 \d x\d t\\
    &= \int^1_0 \int_\Omega 
    - \biggl(\nabla \mu \nabla (L_\mu + \alpha Id)^{-1} \partial_t \mu \biggl)\biggl( (L_\mu + \alpha Id)^{-1} \partial_t \mu \biggl) \d x\d t\\
    &\indent + \alpha \int^1_0\int_\Omega | (L_\mu + \alpha Id)^{-1} \partial_t \mu |^2 \d x\d t\\
    &= \int^1_0 \int_\Omega 
    \biggl(L_\mu (L_\mu + \alpha Id)^{-1} \partial_t \mu \biggl)\biggl( (L_\mu + \alpha Id)^{-1} \partial_t \mu \biggl) \d x\d t\\
    &\indent + \alpha \int^1_0\int_\Omega | (L_\mu + \alpha Id)^{-1} \partial_t \mu |^2 \d x\d t\\
    &= \int^1_0 \int_\Omega 
    \biggl((L_\mu + \alpha Id) (L_\mu + \alpha Id)^{-1} \partial_t \mu \biggl)\biggl( (L_\mu + \alpha Id)^{-1} \partial_t \mu \biggl) \d x\d t\\
    &= \int^1_0 \int_\Omega 
     \partial_t \mu(t,x) (L_\mu + \alpha Id)^{-1} \partial_t \mu(t,x) \d x\d t.
\end{align*}

\noindent Thus, the unnormalized Wasserstein-2 distance forms
\begin{equation}\label{eq:UW-modified-main-prob}
    \begin{split}
        UW_2(\mu_0,\mu_1)^2
            = \inf_{\mu} \Biggl\{ \int^1_0 \int_\Omega 
         \partial_t \mu(t,x) (L_\mu + \alpha Id)^{-1} \partial_t \mu(t,x) \d x\d t:\\
            \mu(0,x) = \mu_0(x), \mu(1,x) = \mu_1(x)
            \Biggl\}.
    \end{split}    
\end{equation}

\begin{proposition}\label{prop:euler-lagrange-of-rho-lap}
If $\mu(t,x)>0$, then the Euler-Lagrange equation of problem (\ref{eq:UW-modified-main-prob}) satisfies the Hamilton-Jacobi equation, i.e.
\begin{align*}
    \partial_t \Phi(t,x) + \frac{1}{2} \|\nabla \Phi(t,x)\|^2 = 0,
    \indent x\in \Omega, t \in [0,1]
\end{align*}
where $\Phi(t,x) = (L_\mu + \alpha Id)^{-1} \partial_t \mu(t,x)$.
\end{proposition}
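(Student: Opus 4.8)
The plan is to compute the first variation of the reduced functional
$$F(\mu) = \int^1_0 \int_\Omega \partial_t \mu\,(L_\mu + \alpha Id)^{-1} \partial_t \mu \,\d x\,\d t$$
directly, the only subtlety being that the operator $(L_\mu+\alpha Id)^{-1}$ itself depends on $\mu$. Abbreviating $A_\mu := L_\mu + \alpha Id$ and setting $\Phi := A_\mu^{-1}\partial_t\mu$ (so that $A_\mu\Phi = \partial_t\mu$ and $F(\mu) = \int^1_0 \langle \partial_t\mu,\Phi\rangle\,\d t$, with $\langle\cdot,\cdot\rangle$ the spatial $L^2(\Omega)$ inner product), I would perturb $\mu \mapsto \mu + \varepsilon\eta$ with $\eta(0,x)=\eta(1,x)=0$ to respect the fixed terminal data, and collect the terms linear in $\varepsilon$.

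The variation splits into three pieces: the two explicit occurrences of $\partial_t\mu$ and the implicit dependence through $A_\mu^{-1}$. For the latter I would invoke the operator identity $\delta(A_\mu^{-1}) = -A_\mu^{-1}(\delta A_\mu)A_\mu^{-1}$ together with the self-adjointness of $A_\mu$ on $L^2(\Omega)$ (valid under the assumed periodic or zero-flux boundary conditions). Self-adjointness makes the two explicit contributions coincide, producing $2\int^1_0 \langle \partial_t\eta,\Phi\rangle\,\d t$, while the implicit term collapses to $-\int^1_0 \langle \Phi,(\delta A_\mu)\Phi\rangle\,\d t$, where $\delta A_\mu = -\nabla\cdot(\eta\nabla)$.

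Next I would reduce each piece to the canonical form $\int^1_0\int_\Omega \eta\,(\cdots)\,\d x\,\d t$. Integrating the explicit term by parts in $t$ annihilates the boundary contributions (since $\eta$ vanishes at $t=0,1$) and yields $-2\int^1_0\int_\Omega \eta\,\partial_t\Phi\,\d x\,\d t$. Integrating the implicit term by parts in $x$ gives $-\int^1_0\int_\Omega \eta\,\|\nabla\Phi\|^2\,\d x\,\d t$, the boundary term again vanishing. Since $\eta$ is otherwise arbitrary, the stationarity condition $\delta F = 0$ forces the integrand $-2\partial_t\Phi - \|\nabla\Phi\|^2$ to vanish pointwise, which is precisely $\partial_t\Phi + \frac{1}{2}\|\nabla\Phi\|^2 = 0$.

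The main obstacle I anticipate is justifying the differentiation of the inverse operator, i.e.\ establishing that $\mu \mapsto A_\mu^{-1}$ is differentiable in the relevant sense and that $\delta(A_\mu^{-1}) = -A_\mu^{-1}(\delta A_\mu)A_\mu^{-1}$ holds. This is exactly where the hypothesis $\mu>0$ enters: positivity makes $A_\mu$ uniformly elliptic and hence boundedly invertible, so the resolvent expansion and the self-adjointness used above are legitimate. Once that differentiability is granted, the remaining manipulations are the routine integrations by parts described, with all boundary terms disappearing under the stated boundary conditions.
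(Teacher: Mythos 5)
Your proposal is correct and follows essentially the same route as the paper's proof: compute the first variation of the reduced functional, use the resolvent identity $\delta(A_\mu^{-1}) = -A_\mu^{-1}(\delta A_\mu)A_\mu^{-1}$ with $\delta A_\mu = L_\eta$ together with self-adjointness to reduce the variation to $\int_0^1\int_\Omega\bigl(2\Phi\,\partial_t\eta - \Phi L_\eta\Phi\bigr)\,\d x\,\d t$, then integrate by parts in $t$ and $x$ and invoke arbitrariness of $\eta$. Your version is slightly more careful than the paper's on two points — explicitly requiring $\eta(0,\cdot)=\eta(1,\cdot)=0$ so the temporal boundary terms vanish, and identifying $\mu>0$ as the hypothesis guaranteeing bounded invertibility of $A_\mu$ — but these are refinements of the same argument, not a different one.
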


\begin{remark}
For unnormalized optimal transport with a spatially independent source function $f(t)$, the formula uses $(L_\mu + \frac{\alpha}{|\Omega|} \int_\Omega)^{-1}$ instead of $(L_\mu + \alpha Id)^{-1}$, i.e.
\begin{equation*}
    \begin{split}
       UW_2(\mu_0,\mu_1)^2
            = \inf_{\mu} \Biggl\{ \int^1_0 \int_\Omega 
         \partial_t \mu(t,x) \biggl(L_\mu + \frac{\alpha}{|\Omega|} \int_\Omega\biggl)^{-1} \partial_t \mu(t,x) \d x\d t:\\
            \mu(0,x) = \mu_0(x), \mu(1,x) = \mu_1(x)
            \Biggl\}.
    \end{split}    
\end{equation*}
The Euler-Lagrange equation satisfies the following:
\begin{align*}
    \partial_t \Phi(t,x) + \frac{1}{2}\|\nabla \Phi(t,x)\|^2 = 0,\indent x\in \Omega, t \in [0,1]
\end{align*}
where $\Phi(t,x) = \bigl( L_{\mu} +\frac{\alpha }{|\Omega|}\int_\Omega \bigl)^{-1} \partial_t \mu(t,x)$.
\end{remark}
\begin{remark}
If $\mu(t,x)=0$, one can show that the Euler-Lagrange equation of problem (\ref{eq:UW-modified-main-prob}) satisfies 
\begin{equation*}
        \partial_t \Phi(t,x) + \frac{1}{2} \|\nabla \Phi(t,x)\|^2 \leq 0.
\end{equation*}
\end{remark}
\begin{proof}
    Define 
    $$\ci(\mu) = \int^1_0 \int_\Omega 
         \partial_t \mu(t,x) (L_\mu + \alpha Id)^{-1} \partial_t \mu(t,x) \d x \d t.$$
    We now calculate the first variation of $\cf(\mu)$ with a perturbation $\eta(t,x) \in C^\infty(\Omega \times [0,1])$.
    \begin{align*}
        0 &= \lim_{h\rightarrow 0} \frac{\ci(\mu+ h\eta) - \ci(\mu)}{h}&&\\
            &= \lim_{h\rightarrow 0} \frac{1}{h}\int^1_0 \int_\Omega \left((\partial_t \mu + h \partial_t \eta) (L_{\mu + h\eta} + \alpha Id)^{-1} (\partial_t \mu + h \partial_t \eta) - \partial_t \mu(t,x) (L_\mu + \alpha Id)^{-1} \partial_t \mu(t,x)\right) \d x \d t\\
            &= \lim_{h\rightarrow 0} \Biggl[ \int^1_0\int_\Omega \partial_t \mu \biggl( \frac{(L_{\mu + h\eta} + \alpha Id)^{-1} - (L_{\mu} + \alpha Id)^{-1}}{h} \biggl) \partial_t \mu\\
            & \hspace{8cm} + 2 \partial_t \eta (L_{\mu+h\eta} + \alpha Id)^{-1} \partial_t \mu \d x \d t + O(h)
            \Biggl]\\
            &= \int^1_0\int_\Omega - \partial_t \mu (L_\mu + \alpha Id)^{-1} L_{\eta} (L_\mu + \alpha Id)^{-1} \partial_t \mu + 2 \partial_t \eta (L_{\mu} + \alpha Id)^{-1} \partial_t \mu \d x \d t\\
            &= \int^1_0 \int_\Omega - \Phi L_\eta \Phi + 2\Phi \partial_t \eta  \d x \d t\\
            &=\int^1_0 \int_\Omega - \eta \biggl(\|\nabla \Phi\|^2 + 2 \partial_t \Phi \biggl) \d x \d t.
    \end{align*}
    This has to be true for all $\eta \in C^\infty(\Omega \times [0,1])$. Thus, we get
    \begin{align*}
        \partial_t \Phi + \frac{1}{2} \|\nabla \Phi\|^2 = 0, \indent x\in \Omega, t\in [0,1].
    \end{align*}
    \noindent This concludes the proof.
\end{proof}

Using Proposition \ref{prop:euler-lagrange-of-rho-lap}, we can formulate a Nesterov accelerated  gradient descent method~\cite{nesterov1983method} to solve the minimization problem (\ref{eq:UW-modified-main-prob}).


\begin{algorithm}[H]
\caption{Nesterov Gradient descent method for $UW_2$ with $f(t,x)$} \label{alg:gradient-descent}
    \begin{algorithmic}
        \WHILE{not converged}
            \STATE $\mu^{k+\frac{1}{2}} = \mu^k - \tau \nabla \ci(\mu^k) = \mu^k + \frac{\tau}{2} \bigl( \partial_t \Phi^k + \frac{1}{2}\|\nabla \Phi^k \|^2 \bigl)\indent \text{ where } \Phi^k = (L_{\mu^k} + \alpha Id)^{-1} \partial_t\mu^k$
            \STATE $\mu^{k+\frac{1}{2}}=\max\{\mu^{k+\frac{1}{2}},0\}$
            \STATE $\mu^{k+1} = (1-\gamma^k)\mu^{k+\frac{1}{2}} + \gamma^k \mu^k$
        \ENDWHILE
    \end{algorithmic}
\end{algorithm}
\noindent Here, $\tau$ and $\gamma^k$ are step sizes of the algorithm.
\begin{align*}
    \gamma^k  &= \frac{1-\lambda^k}{\lambda^{k+1}}, \quad\lambda^0 = 0,\quad \lambda^k = \frac{1+\sqrt{1+4(\lambda^{k-1})^2}}{2}  .
\end{align*}

\begin{remark}
    The Nesterov accelerated gradient descent method can be used for a spatially independent source function $f(t)$. We simply replace the operator $L_\mu + \alpha Id$ with $L_\mu + \alpha \int_\Omega$ from Algorithm \ref{alg:gradient-descent}.
\end{remark}

\begin{remark}
  Here we apply an iterative method, such as conjugate gradient, to solve $(L_{\mu^k} + \alpha Id)^{-1} \partial_t\mu^k$.
\end{remark}

\begin{remark}
We remark that variational problem \eqref{eq:UW-modified-main-prob} is convex w.r.t. $\mu(t,x)$. This fact holds following the second variational formula derived in Lemma 2 of \cite{LiG}. Hence our gradient descent algorithm is applied to a convex optimization problem \ref{eq:UW-modified-main-prob}. 
\end{remark}


We next present the discretization of density path in both time and spatial domains, where the spatial domain is given by $1D$ or $2D$. Here we formulate the operator $L_\mu$ and derive its inverse into matrix forms; see similar approaches in \cite{LiG}.

\subsubsection{1D Discretization}
\noindent Consider  the following one dimensional discretization:
\begin{align*}
    &\bmu = (\bmu^0,\cdots,\bmu^{N_t}) \in \RR^{(N_t+1) \times N_x}\\
    &\bmu^n = (\mu^n_0,\cdots,\mu^n_{N_x-1}) \in \RR^{N_x} \indent (n=0,\cdots,N_t)\\
    &\mu^n_i \in \RR \indent (i=0,\cdots,N_x - 1, n=0,\cdots,N_t)\\
    &\mu^0_i = \mu_0(i\dx), \indent \mu^{N_t}_i = \mu_1(i\dx), \indent (i=0,\cdots,N_x - 1)\\
     &\dx = \frac{|\Omega|}{N_x-1}\indent \dt = \frac{1}{N_t}.
\end{align*}
\noindent Using the finite volume method,  the weighted Laplacian operator $\tilde{L}_{\bmu^n,\alpha} := L_{\bmu^n} + \alpha Id$ can be represented as the following matrix:
    \begin{equation*}
        \begin{split}
             &\tilde{L}_{\bmu^n,\alpha}
             = \begin{pmatrix} 
                 \frac{\mu^n_0 + \mu^n_1}{2\dx^2} & -\frac{\mu^n_0 + \mu^n_1}{2\dx^2} & 0 & \cdots & 0\\
                -\frac{\mu^n_0 + \mu^n_1}{2\dx^2} & \frac{\mu^n_0 + \mu^n_1}{2\dx^2} + \frac{\mu^n_1 + \mu^n_2}{2\dx^2} & -\frac{\mu^n_1 + \mu^n_2}{2\dx^2} & \cdots & 0\\
                0 & - \frac{\mu^n_1 + \mu^n_2}{2\dx^2} & \frac{\mu^n_1 + \mu^n_2}{2\dx^2} + \frac{\mu^n_2 + \mu^n_3}{2\dx^2} & \cdots & 0\\
                \vdots & \ddots & \ddots & \ddots & \vdots\\
                0 & \cdots & \cdots & \cdots & - \frac{\mu^n_{N_x-2}+\mu^n_{N_x-1}}{2\dx^2}
                \end{pmatrix}  + \alpha Id
        \end{split}
    \end{equation*}
Further using the forward Euler method in time, formula (\ref{eq:UW-modified-main-prob}) can be discretized as
\begin{align*}
    &\int^1_0 \int_\Omega 
     \partial_t \mu(t,x) (L_\mu + \alpha Id)^{-1} \partial_t \mu(t,x) \d x \d t\\
    &\approx \dt\dx \sum^{N_t-1}_{n=0} 
        \left< \frac{\bmu^{n+1} - \bmu^{n}}{\dt},  (L_{\bmu^n} + \alpha Id)^{-1} \frac{\bmu^{n+1} - \bmu^{n}}{\dt}\right>_{L^2}\\
    &=\frac{\dx}{\dt} \sum^{N_t-1}_{n=0} 
         \left<\bmu^{n+1} - \bmu^{n}, (L_{\bmu^n} + \alpha Id)^{-1} (\bmu^{n+1} - \bmu^{n})\right>_{L^2}
\end{align*}
with $\bmu^0$ and $\bmu^{N_t}$ are given. $\langle \cdot, \cdot \rangle_{L^2}$ is $L^2$ norm in $\RR^{N_x}$ such that
\begin{align*}
    \langle \bm{a},\bm{b}\rangle_{L^2} = \sum^{N_x-1}_{i=0} a_i b_i \indent\text{ for } \bm{a},\bm{b} \in \RR^{N_x} .
\end{align*}
   
We are now ready to present the derivative of $E(\bmu)$, and formulate the discrete Hamilton-Jacobi equation as in Algorithm \ref{alg:gradient-descent}.
\begin{proposition}\label{prop:1d-discretization-EL}
Denote $\tilde{L}_{\bmu^n,\alpha} := L_{\bmu^n} + \alpha Id$. Let 
$$E(\bmu) :=\frac{\dx}{\dt} \sum^{N_t-1}_{n=0} 
         \left<\bmu^{n+1} - \bmu^{n}, \tilde{L}_{\bmu^n,\alpha}^{-1} (\bmu^{n+1} - \bmu^{n})\right>_{L^2}.$$
Suppose $x \in \Omega$. The derivative of $E(\bmu)$ with respect to $\bmu^n$ ($n=1,\cdots, N_t-1$) is
\begin{align*}
    \frac{\delta E(\bmu)}{\delta \bmu^n}
    &= \frac{\dx}{\dt}\Biggl(
        - 2 \tilde{L}_{\bmu^n,\alpha}(\bmu^{n+1} - \bmu^{n})
        + 2 \tilde{L}_{\bmu^{n-1},\alpha}(\bmu^{n} - \bmu^{n-1})\\\nonumber
        &- \biggl(\left< \tilde{L}_{\bmu^n,\alpha}^{-1}(\bmu^{n+1} - \bmu^n),  L_{\bm{e}_i} \tilde{L}_{\bmu^n,\alpha}^{-1}(\bmu^{n+1} - \bmu^n) \right>_{L^2}\biggl)^{N_x-1}_{i=0}
    \Biggl)
\end{align*}
where $\bm{e}_i \in \RR^{N_x}$ is an index vector defined as
\begin{equation*}
    \bm{e}_i = 
    \begin{cases}
        1 & \text{$i^{th}$ index}\\
        0 & \text{else}.
    \end{cases}
\end{equation*}

\end{proposition}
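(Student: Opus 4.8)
The plan is to compute the Gateaux derivative of $E$ with respect to the single time slice $\bmu^n$ ($1 \le n \le N_t-1$) directly from the definition, leaning on two structural facts about the discrete operator. First, $\tilde{L}_{\bmu^n,\alpha} = L_{\bmu^n} + \alpha\,\mathrm{Id}$ is symmetric, as is visible from the explicit tridiagonal matrix written above; hence so is its inverse. Second, and crucially, $L_{\bmu^n}$ depends \emph{linearly} on the entries of $\bmu^n$ (each entry of $L_{\bmu^n}$ is a linear combination of the $\mu^n_j$), so that $L_{\bmu^n} = \sum_{j=0}^{N_x-1}\mu^n_j\, L_{\bm{e}_j}$ and therefore $\partial \tilde{L}_{\bmu^n,\alpha}/\partial \mu^n_i = L_{\bm{e}_i}$, the $\alpha\,\mathrm{Id}$ piece dropping out.

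First I would observe that in the sum defining $E(\bmu)$ only the summands indexed by $m=n-1$ and $m=n$ involve $\bmu^n$, so the derivative is the sum of their two contributions. The $m=n-1$ summand $\langle \bmu^{n}-\bmu^{n-1},\, \tilde{L}_{\bmu^{n-1},\alpha}^{-1}(\bmu^{n}-\bmu^{n-1})\rangle_{L^2}$ contains $\bmu^n$ only through the difference vector, the operator $\tilde{L}_{\bmu^{n-1},\alpha}^{-1}$ being independent of it; differentiating this quadratic form with a frozen symmetric operator yields $2\,\tilde{L}_{\bmu^{n-1},\alpha}^{-1}(\bmu^{n}-\bmu^{n-1})$, the middle term of the claimed formula.

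The $m=n$ summand $\langle \bmu^{n+1}-\bmu^{n},\, \tilde{L}_{\bmu^{n},\alpha}^{-1}(\bmu^{n+1}-\bmu^{n})\rangle_{L^2}$ depends on $\bmu^n$ both in the difference vector and in the operator, and I would split the derivative accordingly. Varying the difference vector alone (operator frozen) gives, with the sign coming from $-\bmu^n$, the term $-2\,\tilde{L}_{\bmu^{n},\alpha}^{-1}(\bmu^{n+1}-\bmu^{n})$. Varying the operator is the one genuinely nontrivial step: I would apply the matrix-inverse identity $\partial_\theta A^{-1} = -A^{-1}(\partial_\theta A)A^{-1}$ together with $\partial_{\mu^n_i}\tilde{L}_{\bmu^n,\alpha} = L_{\bm{e}_i}$, turning the $i$-th component into $-\langle \bmu^{n+1}-\bmu^n,\, \tilde{L}_{\bmu^n,\alpha}^{-1} L_{\bm{e}_i}\tilde{L}_{\bmu^n,\alpha}^{-1}(\bmu^{n+1}-\bmu^n)\rangle_{L^2}$; moving one copy of the symmetric $\tilde{L}_{\bmu^n,\alpha}^{-1}$ across the inner product produces exactly the bracketed vector $\bigl(\langle \tilde{L}_{\bmu^n,\alpha}^{-1}(\bmu^{n+1}-\bmu^n),\, L_{\bm{e}_i}\tilde{L}_{\bmu^n,\alpha}^{-1}(\bmu^{n+1}-\bmu^n)\rangle_{L^2}\bigr)_{i=0}^{N_x-1}$. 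Collecting the three contributions and restoring the prefactor $\dx/\dt$ gives the result. The main obstacle is precisely this operator-differentiation step: one must keep track of the two nested applications of $\tilde{L}_{\bmu^n,\alpha}^{-1}$ and use symmetry to fold them into the single quadratic form appearing in the statement, while the remaining two terms are routine derivatives of quadratic forms with frozen symmetric operators.
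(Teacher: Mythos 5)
Your proof is correct and takes essentially the same route as the paper's: you isolate the two summands ($m=n-1$ and $m=n$) involving $\bmu^n$, differentiate the quadratic forms with the operators frozen, and treat the operator dependence via $\partial_\theta A^{-1} = -A^{-1}(\partial_\theta A)A^{-1}$ together with $\partial \tilde{L}_{\bmu^n,\alpha}/\partial \mu^n_i = L_{\bm{e}_i}$, using symmetry of $\tilde{L}_{\bmu^n,\alpha}^{-1}$ to fold the result into the stated quadratic form --- exactly the paper's three steps, with your justification of the linearity $L_{\bmu^n}=\sum_j \mu^n_j L_{\bm{e}_j}$ spelled out more explicitly than in the paper. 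Note also that your first two terms carry $\tilde{L}^{-1}$, which agrees with the paper's own proof body; the missing inverse superscripts in the proposition's displayed formula appear to be a typo in the statement, not an error on your part.
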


\begin{proof}
    Differentiating $E(\bmu)$ with respect to $\bmu^n$ for $n=1, \cdots, N_t-1$, we get
    \begin{align*}
        \frac{\delta E(\bmu)}{\delta \bmu^n}
        &= \frac{\delta}{\delta \bmu^n} \Biggl(
        \dt \dx \sum^{N_t-1}_{m=0} (\bmu^{m+1} - \bmu^{m}) \tilde{L}_{\bmu^n,\alpha}^{-1}(\bmu^{m+1} - \bmu^{m})
        \Biggl)\\
        &= \dt \dx \Biggl(
            - 2 \tilde{L}_{\bmu^n,\alpha}^{-1}(\bmu^{n+1} - \bmu^n)
            + 2 \tilde{L}_{\bmu^{n-1},\alpha}^{-1}(\bmu^{n} - \bmu^{n-1})\\
            &\hspace{2cm} + (\bmu^{n+1} - \bmu^n)\frac{\partial \tilde{L}_{\bmu^n,\alpha}^{-1}}{\partial \bmu^n}(\bmu^{n+1} - \bmu^n)
        \Biggl),
    \end{align*}
    \noindent and
    \begin{align*}
        (\bmu^{n+1} - \bmu^n)\frac{\delta \tilde{L}_{\bmu^n,\alpha}^{-1}}{\delta \bmu^n_n}(\bmu^{n+1} - \bmu^n) 
        &= - \left<\bmu^{n+1} - \bmu^n, \tilde{L}_{\bmu^n,\alpha}^{-1} L_{\bm{e_i}} \tilde{L}_{\bmu^n,\alpha}^{-1}(\bmu^{n+1} - \bmu^n) \right>_{L^2}\\ 
        &= - \left< \tilde{L}_{\bmu^n,\alpha}^{-1}(\bmu^{n+1} - \bmu^n),  L_{\bm{e}_i} \tilde{L}_{\bmu^n,\alpha}^{-1}(\bmu^{n+1} - \bmu^n) \right>_{L^2}.
    \end{align*}
    This concludes the proof.
\end{proof}

    \noindent Consider $\bm{u} = (u_0, \cdots, u_{N_x-1})^T \in \RR^{N_x}$, then $\left<\bm{u}, L_{\bm{e}_i} \bm{u}\right>_{L_2}$ forms the R.H.S. of the discrete Hamilton-Jacobi equation as follows
    \begin{equation*}
        \left< \bm{u}, L_{\bm{e}_i} \bm{u} \right>_{L^2} =
        \begin{cases}
         &\frac{1}{2}\left(\frac{u_{i+1} - u_{i}}{\dx}\right)^2 + \frac{1}{2}\left(\frac{u_{i} - u_{i-1}}{\dx}\right)^2, \indent i = 1,\cdots,N_x-2\\
         &\frac{1}{2}\left(\frac{u_{i+1} - u_{i}}{\dx}\right)^2 , \indent i = 0\\
         &\frac{1}{2}\left(\frac{u_{i} - u_{i-1}}{\dx}\right)^2, \indent i = N_x-1.
         \end{cases}
    \end{equation*}
    
\subsubsection{2D Discretization} Now, consider the two dimensional discretization. Assume $\Omega = [0,1] \times [0,1]$ and $t\in[0,1]$.
\begin{align*}
    &\bmu = (\bmu^0,\cdots,\bmu^{N_t}) \in \RR^{(N_t+1) \times N_x \times N_y}\\
    &\bmu^n = (\mu^n_{ij})_{i=0}^{N_x-1}{}_{j=0}^{N_y-1} \in \RR^{N_x \times N_y} \indent (n=0,\cdots,N_t)\\
    &\mu^n_{ij} \in \RR \indent (i=0,\cdots,N_x - 1,j=0,\cdots,N_y - 1, n=0,\cdots,N_t)\\
    &\mu^0_{ij} = \mu_0(i\dx, j\dy), \indent \mu^{N_t}_{ij} = \mu_1(i\dx, j\dy), \indent (i=0,\cdots,N_x - 1, j =0, \cdots,N_y - 1)\\
     &\dx = \frac{1}{N_x-1}, \indent \dy = \frac{1}{N_y-1}, \indent \dt = \frac{1}{N_t}.
\end{align*}
\noindent Similar to 1D case, using the finite volume method, formula (\ref{eq:UW-modified-main-prob}) can be discretized as
\begin{align*}
    &\int^1_0 \int^1_0 \int^1_0 
     \partial_t \mu(t,x,y) (L_\mu + \alpha Id)^{-1} \partial_t \mu(t,x,y) \d x \d y \d t\\
    &\approx \frac{\dx\dy}{\dt} \sum^{N_t-1}_{n=0} 
         \left<\bmu^{n+1} - \bmu^{n}, (L_{\bmu^n} + \alpha Id)^{-1} (\bmu^{n+1} - \bmu^{n})\right>_{L^2}
\end{align*}
with $\bmu^0$ and $\bmu^{N_t}$ are given and $\langle \cdot, \cdot \rangle_{L^2}$ is defined as
\begin{align*}
    \langle \bm{a},\bm{b}\rangle_{L^2} = \sum^{N_x-1}_{i=0}\sum^{N_y-1}_{j=0} a_{ij} b_{ij} \indent\text{ for } \bm{a},\bm{b} \in \RR^{N_x \times N_y} .
\end{align*}

\noindent The major difference between 1D discretization and 2D discretization arises from the weighted Laplacian operator $\tilde{L}_{\bmu^n,\alpha}$. Consider $\bm{w} = (w_{i,j})_{i=0}^{N_x-1}{}_{j=0}^{N_y-1} \in \RR^{N_x \times N_y}$. For $i=0,\cdots,N_x-1$ and $j=0,\cdots,N_y-1$, the operator can be described as follows:
    \begin{align*}
        &(\tilde{L}_{\bmu^n,\alpha} \bm{w})_{ij}\\
        =&  - \frac{1}{\dx^2} \Biggl(
            \frac{\mu^n_{i+1,j} + \mu^n_{i,j}}{2}w_{i+1,j} - 2 \biggl( \frac{\mu^n_{i+1,j} + \mu^n_{i,j}}{2} + \frac{\mu^n_{i,j} + \mu^n_{i-1,j}}{2}\biggl)w_{i,j} + \frac{\mu^n_{i,j} + \mu^n_{i-1,j}}{2} w_{i-1,j}
        \Biggl)\\
        &- \frac{1}{\dy^2} \Biggl(
            \frac{\mu^n_{i,j+1} + \mu^n_{i,j}}{2}w_{i,j+1} - 2 \biggl( \frac{\mu^n_{i,j+1} + \mu^n_{i,j}}{2} + \frac{\mu^n_{i,j} + \mu^n_{i,j-1}}{2}\biggl)w_{i,j} + \frac{\mu^n_{i,j} + \mu^n_{i,j-1}}{2} w_{i,j-1}
        \Biggl)\\
        & + \alpha w_{i,j}.
    \end{align*}
    Here, we assume the Neumann boundary on the spatial domain $\Omega$. Thus,
    \begin{align*}
        w_{-1,j} = w_{0,j},\indent w_{N_x,j} = w_{N_x-1,j},\indent j=0,\cdots,N_y-1\\
        w_{i,-1} = w_{i,0},\indent w_{i,N_y} = w_{i,N_y-1},\indent i=0,\cdots,N_x-1\\
        \mu^n_{-1,j} = \mu^n_{0,j},\indent \mu^n_{N_x,j} = \mu^n_{N_x-1,j},\indent j=0,\cdots,N_y-1\\
        \mu^n_{i,-1} = \mu^n_{i,0},\indent \mu^n_{i,N_y} = \mu^n_{i,N_y-1},\indent i=0,\cdots,N_x-1.
    \end{align*}
    
\begin{proposition}
Denote $\tilde{L}_{\bmu^n,\alpha} := L_{\bmu^n} + \alpha Id$. Let 
$$E(\bmu) :=\frac{\dx \dy}{\dt} \sum^{N_t-1}_{n=0} 
         \left<\bmu^{n+1} - \bmu^{n}, \tilde{L}_{\bmu^n,\alpha}^{-1} (\bmu^{n+1} - \bmu^{n})\right>_{L^2}.$$
Suppose $x \in \Omega = [0,1] \times[0,1]$. The derivative of $E(\bmu)$ with respect to $\bmu^n$ ($n=1,\cdots, N_t-1$) is
\begin{align*}
    \frac{\delta E(\bmu)}{\delta \bmu^n}
    &= \frac{\dx \dy}{\dt}\Biggl(
        - 2 \tilde{L}_{\bmu^n,\alpha}(\bmu^{n+1} - \bmu^{n})
        + 2 \tilde{L}_{\bmu^{n-1},\alpha}(\bmu^{n} - \bmu^{n-1})\\\nonumber
        &- \biggl(\left< \tilde{L}_{\bmu^n,\alpha}^{-1}(\bmu^{n+1} - \bmu^n),  L_{\bm{e}_{ij}} \tilde{L}_{\bmu^n,\alpha}^{-1}(\bmu^{n+1} - \bmu^n) \right>_{L^2}\biggl)^{N_x-1, N_y-1}_{i=0, j=0}
    \Biggl).
\end{align*}
where $\bm{e}_{ij}$ is an index vector such that $\bm{e}_{k,l} = 1$ if $k=i$ and $l=j$ and $0$ otherwise.
\end{proposition}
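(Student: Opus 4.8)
The plan is to follow verbatim the argument for Proposition \ref{prop:1d-discretization-EL}, since the functional $E(\bmu)$ has exactly the same algebraic structure in two dimensions: the only change is that the single spatial index $i$ is replaced by the pair $(i,j)$ and the operator $\tilde{L}_{\bmu^n,\alpha}$ is now the two-dimensional weighted Laplacian written out explicitly above. First I would observe that in the sum $\sum_{m=0}^{N_t-1}\langle \bmu^{m+1}-\bmu^m,\tilde{L}_{\bmu^m,\alpha}^{-1}(\bmu^{m+1}-\bmu^m)\rangle_{L^2}$ the unknown $\bmu^n$ (for a fixed interior index $n$ with $1\le n\le N_t-1$) enters only through the two summands $m=n-1$ and $m=n$: it appears in the difference vector $\bmu^{m+1}-\bmu^m$ whenever $m+1=n$ or $m=n$, and it appears inside the operator $\tilde{L}_{\bmu^m,\alpha}^{-1}$ only when $m=n$. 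Every other summand is constant in $\bmu^n$ and drops out of the variation.

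Next I would collect the three resulting contributions. The $m=n-1$ term $\langle \bmu^{n}-\bmu^{n-1},\tilde{L}_{\bmu^{n-1},\alpha}^{-1}(\bmu^{n}-\bmu^{n-1})\rangle_{L^2}$ depends on $\bmu^n$ only through its outer vectors; since $\tilde{L}_{\bmu^{n-1},\alpha}^{-1}$ is symmetric, differentiating this quadratic form yields $2\tilde{L}_{\bmu^{n-1},\alpha}^{-1}(\bmu^{n}-\bmu^{n-1})$. The $m=n$ term $\langle \bmu^{n+1}-\bmu^{n},\tilde{L}_{\bmu^{n},\alpha}^{-1}(\bmu^{n+1}-\bmu^{n})\rangle_{L^2}$ splits into two pieces: differentiating the outer vectors gives $-2\tilde{L}_{\bmu^{n},\alpha}^{-1}(\bmu^{n+1}-\bmu^{n})$, while differentiating the inverse operator through each entry $\mu^n_{ij}$ uses the standard identity $\partial_{\mu^n_{ij}}\tilde{L}_{\bmu^n,\alpha}^{-1}=-\tilde{L}_{\bmu^n,\alpha}^{-1}\,(\partial_{\mu^n_{ij}}\tilde{L}_{\bmu^n,\alpha})\,\tilde{L}_{\bmu^n,\alpha}^{-1}$, producing the $(i,j)$ component $-\langle \tilde{L}_{\bmu^n,\alpha}^{-1}(\bmu^{n+1}-\bmu^n),L_{\bm{e}_{ij}}\tilde{L}_{\bmu^n,\alpha}^{-1}(\bmu^{n+1}-\bmu^n)\rangle_{L^2}$. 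Assembling the three pieces and multiplying by the prefactor $\tfrac{\dx\dy}{\dt}$ reproduces the claimed formula.

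The one step that genuinely requires the two-dimensional data is the identity $\partial_{\mu^n_{ij}}\tilde{L}_{\bmu^n,\alpha}=L_{\bm{e}_{ij}}$, i.e. that $\tilde{L}_{\bmu^n,\alpha}$ is affine-linear in the entries of $\bmu^n$ with the $\alpha\,Id$ part constant. I would verify this directly from the explicit action $(\tilde{L}_{\bmu^n,\alpha}\bm{w})_{ij}$ displayed above: each output value is an affine-linear combination of the neighbouring densities $\mu^n_{i\pm1,j},\mu^n_{i,j\pm1},\mu^n_{ij}$, so its derivative with respect to a single $\mu^n_{ij}$ reassembles precisely the operator built from the indicator $\bm{e}_{ij}$, namely $L_{\bm{e}_{ij}}$. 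The main obstacle is therefore purely bookkeeping at the boundary: I must check that the Neumann reflection rules $w_{-1,j}=w_{0,j}$, $\mu^n_{-1,j}=\mu^n_{0,j}$, and their counterparts, are respected so that $\tilde{L}_{\bmu^n,\alpha}$ stays symmetric (justifying the factors of $2$) and so that the differentiation of $L_{\bmu^n}$ in $\mu^n_{ij}$ remains consistent with $L_{\bm{e}_{ij}}$ across the boundary stencil. Once this linearity and symmetry are confirmed, the remaining manipulations are identical to the one-dimensional proof, and no new analytic difficulty arises beyond the index accounting of the two-dimensional stencil.
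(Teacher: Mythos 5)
Your proposal is correct and follows essentially the same route as the paper: the paper's proof of this proposition is literally a one-line deferral to the one-dimensional case, and your argument (isolating the two summands $m=n-1$, $m=n$, differentiating the quadratic forms, and applying the resolvent identity $\partial_{\mu^n_{ij}}\tilde{L}_{\bmu^n,\alpha}^{-1}=-\tilde{L}_{\bmu^n,\alpha}^{-1}L_{\bm{e}_{ij}}\tilde{L}_{\bmu^n,\alpha}^{-1}$) is exactly that one-dimensional proof transcribed to the double index $(i,j)$. Note only that your computation yields $-2\tilde{L}_{\bmu^n,\alpha}^{-1}(\bmu^{n+1}-\bmu^{n})+2\tilde{L}_{\bmu^{n-1},\alpha}^{-1}(\bmu^{n}-\bmu^{n-1})$ with the inverses, which agrees with the paper's own proof rather than with the proposition statement as printed (where the exponent $-1$ on the first two terms appears to have been dropped), so your version is the correct one.
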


\begin{proof}
    The proof follows exactly the one in proposition \ref{prop:1d-discretization-EL}.
\end{proof}

    \noindent Consider a vector $\bm{u} = (u_{ij})^{N_x-1}_{i=0}{}^{N_y-1}_{j=0} \in \RR^{N_x \times N_y}$ that satisfies the Neumann boundary condition. Similar to 1D case, $\left< \bm{u}, L_{\bm{e}_{i,j}} \bm{u} \right>_{L^2}$ can be computed easily based on the operator and it forms the R.H.S. of the discrete Hamilton-Jacobi equation. For $i=0,\cdots,N_x-1$ and $j=0,\cdots,N_y-1$,
    \begin{equation*}
    \begin{split}
        \left< \bm{u}, L_{\bm{e}_{i,j}} \bm{u} \right>_{L^2} =&
        \frac{1}{2}\left(\frac{u_{i+1,j} - u_{i,j}}{\dx}\right)^2 + \frac{1}{2}\left(\frac{u_{i,j} - u_{i-1,j}}{\dx}\right)^2\\
        +&
        \frac{1}{2}\left(\frac{u_{i,j+1} - u_{i,j}}{\dy}\right)^2 + \frac{1}{2}\left(\frac{u_{i,j} - u_{i,j-1}}{\dy}\right)^2.
    \end{split}
    \end{equation*}
    

\subsection{$L^1$ Generalized Unnormalized Wasserstein metric}
Our discussion here mainly focuses on $\|u\|_1 = \sum_{i} |u_i|$. The algorithm can be simply extended to $\|u\|_2 = \sqrt{\sum_i u_i^2}$ using the corresponding shrinkage operator.  With the Lagrangian (\ref{prop:l1-lagrangian}), we consider a saddle point problem.
\begin{align*}
    \inf_{\m,c} \sup_{\Phi} \cl (m,c,\Phi).
\end{align*}
We can use PDHG~\cite{CP} to solve the saddle point problem by minimizing $\cl(\m,c,\Phi)$ over $m$ and $c$ and maximizing over $\Phi$.
\begin{align}
    \m^{k+1} &= \argmin_{\m} \biggl( \|\m\|_{1} + \frac{\epsilon}{2} \|\m\|^2_2 + \left< \Phi^k, \nabla \cdot \m \right>_{L^2} +\frac{1}{2\lambda} \|\m - \m^k\|^2_2 \biggl)\label{eq:L1-mk_plus_1}\\
    c^{k+1} &= \argmin_{c} \biggl( \frac{1}{\alpha} \|c\|_{1}  + \frac{\epsilon}{2}\|c\|^2_2 - \left< \Phi^k, c \right>_{L^2}  +\frac{1}{2\lambda} \|c -c^k\|^2_2  \biggl)\label{eq:L1-ck_plus_1}\\
    \Phi^{k+1} &= \argmax_\Phi \biggl( \big< \Phi, \nabla \cdot ( 2\m^{k+1} - \m^k) - (2 c^{k+1} - c^k) + \mu_1 - \mu_0 \big>_{L^2} - \frac{1}{2 \tau} \|\Phi - \Phi^k\|^2_2 \biggl)
\end{align}
where $\lambda$ and $\tau$ are step sizes of the algorithm. 
Note that we add a small $\|\cdot\|_2^2$ perturbation in \eqref{eq:L1-mk_plus_1} and \eqref{eq:L1-ck_plus_1} to strictly convexify the problem. This adjustment can overcome the possible non-uniqueness of the optimal transport problem. This trick is also related to so called the elastic net regularization~\cite{parikh2014proximal},
whose proximal operator is essentially the same as the proximal operator of $L^1$ norm shrink operator.
\begin{algorithm}[H]
\caption{PDHG for $UW_1$ with $f(t,x)$}\label{alg:L1-pdhg}
\begin{algorithmic}
\STATE $\m^{k+1} = 1/(1 + \epsilon \lambda) shrink\biggl( \m^k + \lambda \nabla \Phi^k, \lambda \biggl)$\\
\STATE $c^{k+1} = 1/(1+\epsilon \lambda) shrink\biggl( c^k + \lambda \Phi^k, \frac{\lambda}{\alpha} \biggl)$\\
\STATE $\Phi^{k+1} = \Phi^k + \tau\biggl( \nabla \cdot(2\m^{k+1} - \m^k) - (2c^{k+1} - c^k) + \mu_1 - \mu_0 \biggl)$
\end{algorithmic}
\end{algorithm}
\noindent where the $shrink$ operator is defined as following:
\[
    (shrink(u,t))_i = 
    \begin{cases}
    (1 - t/|u_i|)u_i, &\text{for } \|u_i\|_1 \geq t;\\
    0, &\text{for } \|u_i\|_1 < t.
    \end{cases} \quad i = 1,\cdots, d.
\]

\begin{remark}
    This algorithm can also be extended to $\|\cdot\|_2$ by simply replacing the above shrink operator as \[
    shrink(u,t) = 
    \begin{cases}
    (1 - t/\|u\|_2)u, &\text{for } \|u\|_2 \geq t;\\
    0, &\text{for } \|u\|_2 < t.
    \end{cases}
\]
\end{remark}



\subsubsection{Discretization}

\noindent Consider  the following two dimensional discretization on a domain $\Omega = [0,1]\times[0,1]$ based on the finite volume method.
\begin{align*}
    &\dx = \frac{1}{N_x}, \dy = \frac{1}{N_y}\\
    &\mu^0_{ij} = \mu_0(i\dx,j\dy),\indent \mu^1_{ij} = \mu_1(i\dx,j\dy)\\
    &V = \{(i,j): i=0,\cdots,N_x, j=0,\cdots,N_y\}\\
    &E_x = \{(i \pm \frac{1}{2},j): i=1,\cdots,N_x-1,j=0,\cdots,N_y)\}\\
    &E_y = \{(i ,j \pm \frac{1}{2}): i=0,\cdots,N_x,j=1,\cdots,N_y-1)\}\\
    &\bm{\Phi} = (\Phi_{ij})_{ij \in V} \in \RR^{(N_x+1)\times (N_y+1)},
    \indent \bm{c} = (c_{ij})_{ij \in V} \in \RR^{(N_x+1)\times (N_y+1)}\\
    &\bm{mx} = (mx_{e})_{e \in E_x} \in \RR^{N_x \times (N_y+1)},
    \indent \bm{my} = (my_{e})_{e \in E_y} \in \RR^{(N_x+1)\times N_y}\\
    &mx_{i+\frac{1}{2},j} \approx \int^{(i+1)\dx}_{i\dx} \int^{(j+1/2)\dy}_{(j-1/2)\dy} m_x(x,y) \d y \d x\\
    &my_{i,j+\frac{1}{2}} \approx \int^{(i+1/2)\dx}_{(i-1/2)\dx} \int^{(j+1)\dy}_{j\dy} m_y(x,y) \d y \d x.
\end{align*}

\noindent Here $m$ satisfies the zero flux condition. Thus, $\bm{mx}$ and $\bm{my}$ satisfy the following boundary conditions on $m$:
\begin{align*}
    &mx_{-\frac{1}{2},j} =  mx_{N_x+\frac{1}{2},j} = 0,\indent j=0,\cdots,N_y\\
    &my_{i, -\frac{1}{2}} = my_{i, N_y+\frac{1}{2}} = 0,\indent i=0,\cdots,N_x.
\end{align*}
The discretization of Algorithm \ref{alg:L1-pdhg} can be written as the following:
\begin{align*}
    mx^{k+\frac{1}{2}}_{i+\frac{1}{2},j} =& \frac{1}{1+\epsilon \lambda} \biggl( mx^k_{i+\frac{1}{2},j} +  \frac{\lambda}{\dx}(\Phi_{i+1,j} - \Phi_{i,j}) \biggl)\\
    my^{k+\frac{1}{2}}_{i,j+\frac{1}{2}} =& \frac{1}{1+\epsilon \lambda} \biggl( my^k_{i,j+\frac{1}{2}} +  \frac{\lambda}{\dy}(\Phi_{i,j+1} - \Phi_{i,j}) \biggl)\\
    c^{k+\frac{1}{2}}_{ij} =& \frac{1}{1+\epsilon \lambda} shrink\biggl( c^k + \lambda \Phi^k_{ij}, \frac{\lambda}{\alpha} \biggl)\\
    \bm{mx}^{k+1} =& 2\bm{mx}^{k+\frac{1}{2}} - \bm{mx}^{k}\\
    \bm{my}^{k+1} =& 2\bm{my}^{k+\frac{1}{2}} - \bm{my}^{k}\\
    \bm{c}^{k+1} =& 2\bm{c}^{k+\frac{1}{2}} - \bm{c}^{k}\\
    \Phi^{k+1}_{ij} =& \Phi^{k}_{ij} + \tau \biggl(
        \frac{1}{\dx} (mx^{k+1}_{i+\frac{1}{2},j}-mx^{k+1}_{i-\frac{1}{2},j})
        + \frac{1}{\dy} (my^{k+1}_{i,j+\frac{1}{2}}-my^{k+1}_{i,j-\frac{1}{2}})
        - c^{k+1}_{ij} + \mu^1_{ij} - \mu^0_{ij}
    \biggl).
\end{align*}




\section{Numerical experiments}\label{section:numerical-experiments}
In this section, we show the numerical results with various examples for $L^1$ and $L^2$ unnormalized optimal transport with the spatially dependent source function.


\subsection{Nesterov Accelerated Gradient Descent for $UW_2$}

We present four numerical experiments with different $\mu_0$ and $\mu_1$ using Algorithm \ref{alg:gradient-descent}.  
\subsubsection{Experiment 1}
Consider a one dimensional problem on $\Omega =  [0,1]$ with $\mu_0$ and $\mu_1$ in $\cm(\Omega)
$ as
\begin{align*}
    \mu_0 &= N(x;\frac{1}{5}, 0.0001)\\
    \mu_1 &=  N(x;\frac{4}{5}, 0.0001) \cdot 1.4
\end{align*}
Here we choose $N(x,\mu,\sigma^2) = C \exp\left(-\frac{(x-\mu)^2}{2\sigma^2}\right)$ with an appropriate choice of $C$ satisfying $ \int_\Omega N(x;\mu,\sigma^2) \d x = 1$. Note that $\int_\Omega \mu_0 \d x = 1$ and $\int_\Omega \mu_1 \d x = 1.4$. We use the Algorithm \ref{alg:gradient-descent} to compute the minimizer $\mu(t,x)$ of $UW_2(\mu_0,\mu_1)$. The parameters chosen for the experiment are $N_x = 40, N_t = 30, \tau = 0.1$.

\begin{figure}[ht]
\begin{minipage}{0.32\linewidth}
\includegraphics[width=1\linewidth]{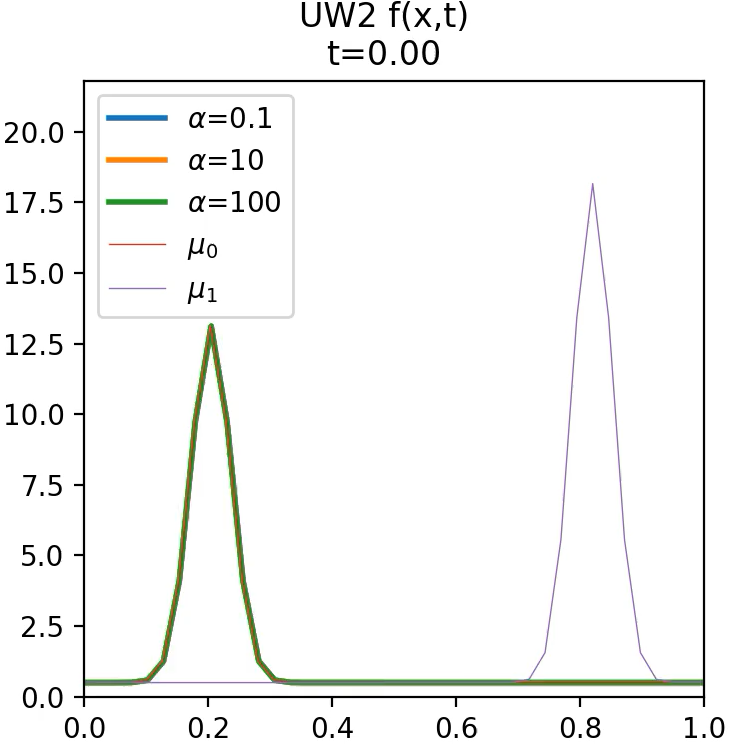}
\end{minipage}\hfill
\begin{minipage}{0.32\linewidth}
\includegraphics[width=1\linewidth]{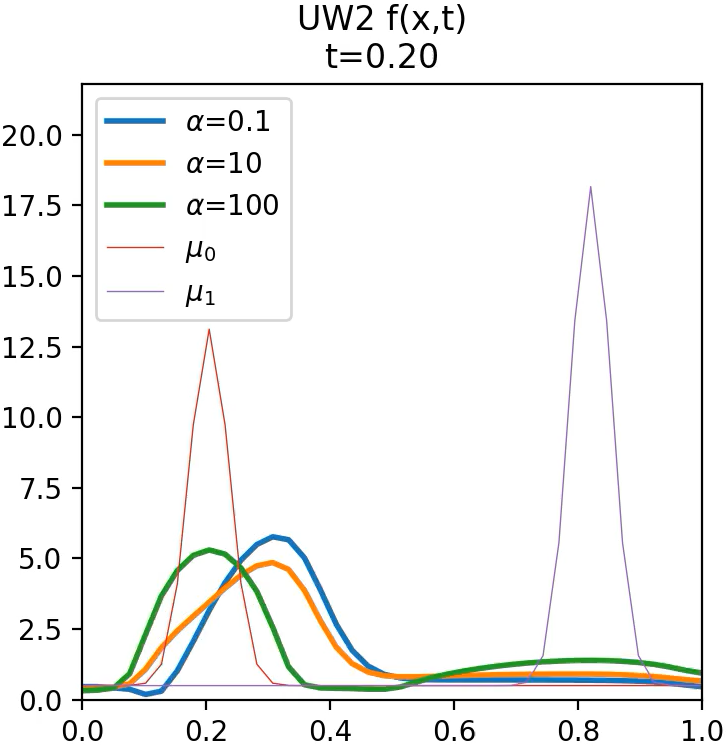}
\end{minipage}\hfill
\begin{minipage}{0.32\linewidth}
\includegraphics[width=1\linewidth]{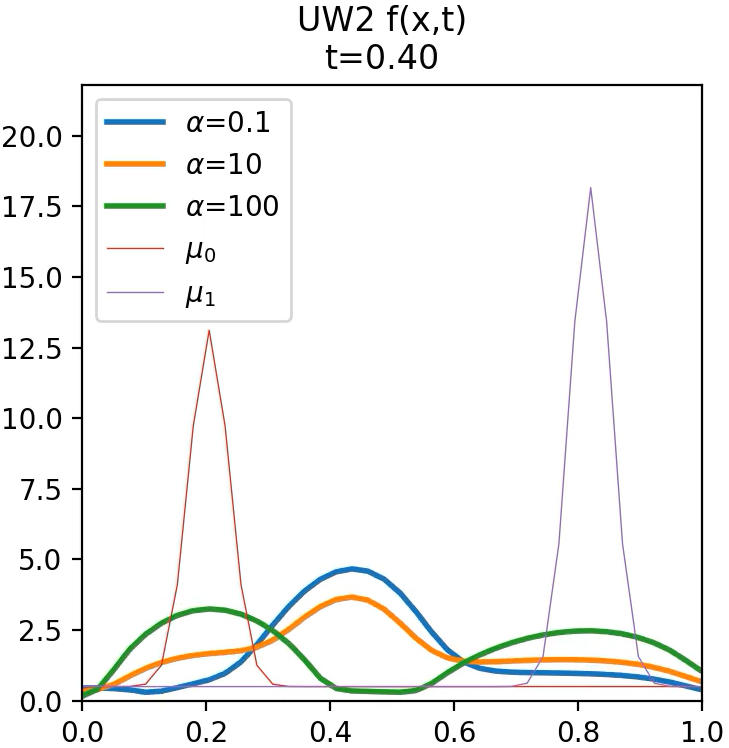}
\end{minipage}\hfill
\begin{minipage}{0.32\linewidth}
\includegraphics[width=1\linewidth]{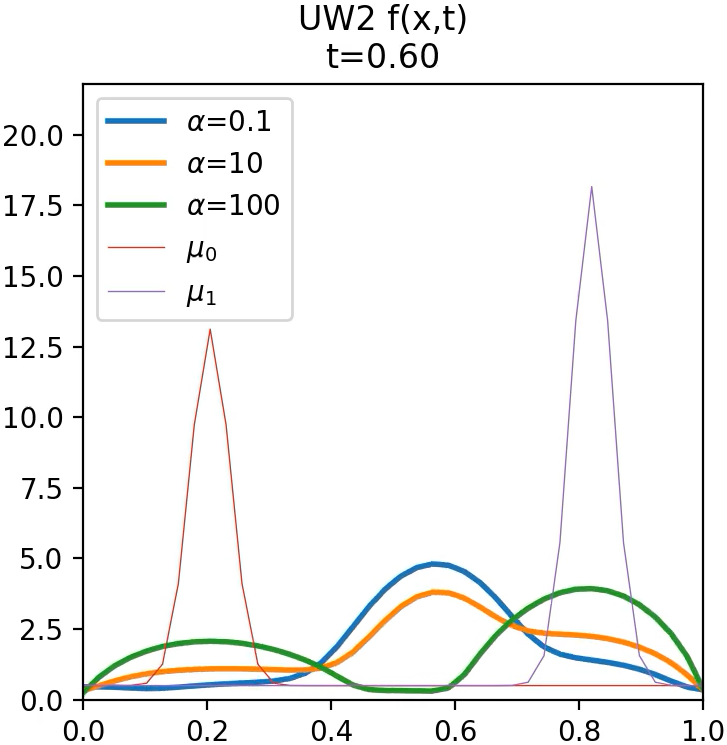}
\end{minipage}\hfill
\begin{minipage}{0.32\linewidth}
\includegraphics[width=1\linewidth]{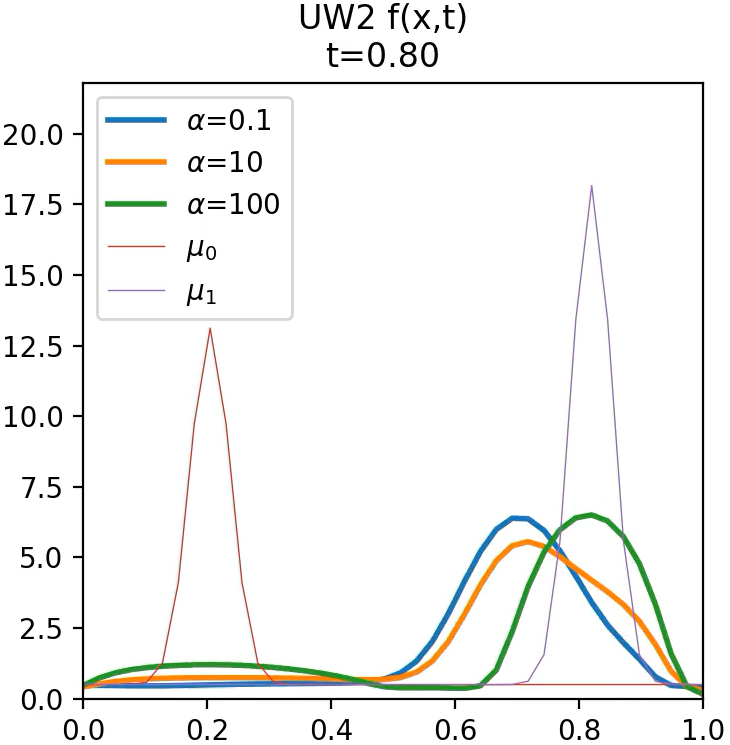}
\end{minipage}\hfill
\begin{minipage}{0.32\linewidth}
\includegraphics[width=1\linewidth]{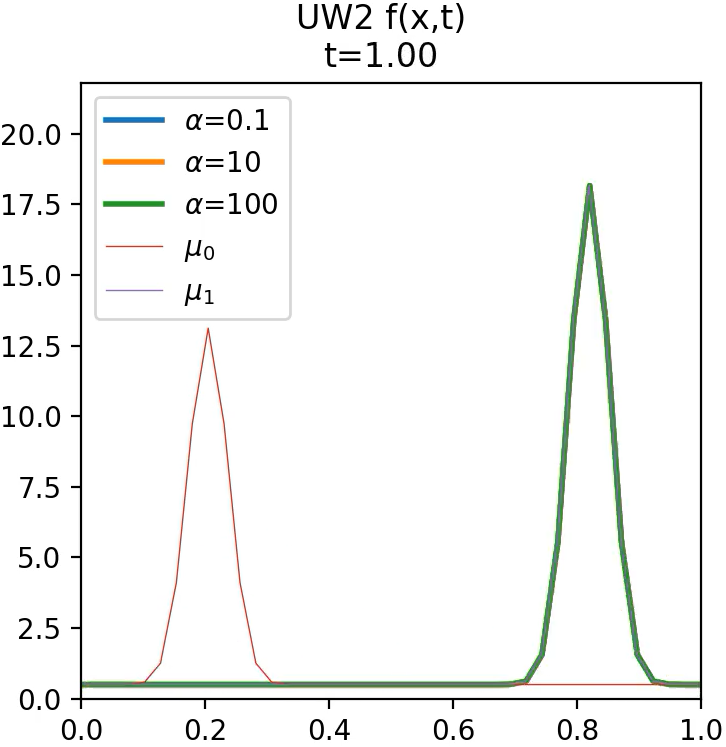}
\end{minipage}\hfill
\caption{\textit{Experiment 1.} $L^2$ Unnormalized optimal transportation with $f(t,x)$ using different $\alpha$ values. Blue line shows $\alpha=0.1$, orange line shows $\alpha=10$, and green line shows $\alpha=100$.}
\label{fig:UW2-1d-ex}
\end{figure}

Figure \ref{fig:UW2-1d-ex} shows the $L^2$ unnormalized optimal transport with a spatially dependent source function $f(t,x)$ with different $\alpha$ values. 
The parameter $\alpha$ determines the ratio between transportation and linear interpolation for $\mu_0$ and $\mu_1$. If $\alpha$ is small, the geodesic of generalized unnormalized optimal transport is similar to the normalized (classical) optimal transport geodesics. As the parameter $\alpha$ increases,
the generalized unnormalized optimal transport geodesic behaves closer to the Euclidean geodesics.

\begin{figure}[ht]
\begin{minipage}{0.32\linewidth}
\includegraphics[width=1\linewidth]{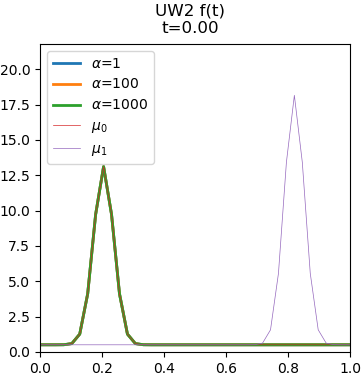}
\end{minipage}\hfill
\begin{minipage}{0.32\linewidth}
\includegraphics[width=1\linewidth]{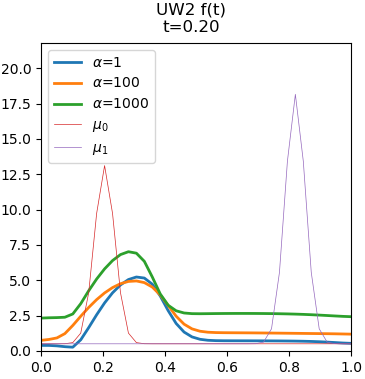}
\end{minipage}\hfill
\begin{minipage}{0.32\linewidth}
\includegraphics[width=1\linewidth]{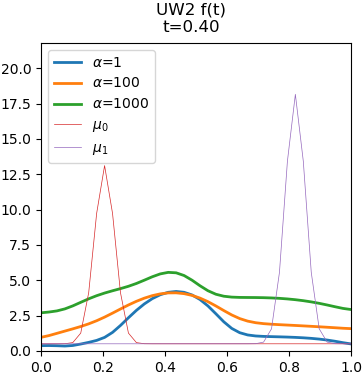}
\end{minipage}\hfill
\begin{minipage}{0.32\linewidth}
\includegraphics[width=1\linewidth]{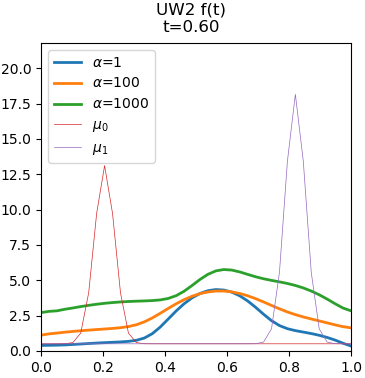}
\end{minipage}\hfill
\begin{minipage}{0.32\linewidth}
\includegraphics[width=1\linewidth]{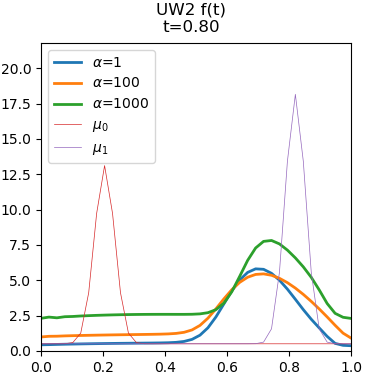}
\end{minipage}\hfill
\begin{minipage}{0.32\linewidth}
\includegraphics[width=1\linewidth]{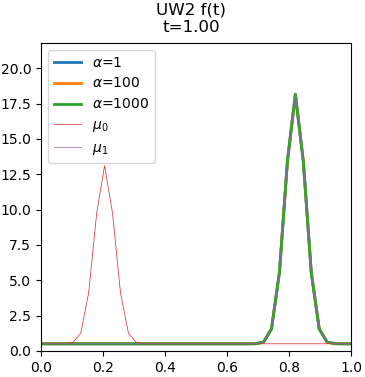}
\end{minipage}\hfill
\caption{\textit{Experiment 1.} $L^2$ Unnormalized optimal transportation with $f(t)$ using different $\alpha$ values. Blue lines show $\alpha=1$, orange lines show $\alpha=100$, and green lines show $\alpha=1000$.}
\label{fig:UW2-1d-ex-ft}
\end{figure}

Figure \ref{fig:UW2-1d-ex-ft} shows the transportation with a spatially independent source function $f(t)$. It is clear to see  that the masses are created or removed locally for the transportation with $f(t,x)$, while they are created or removed globally for the transportation with $f(t)$.  
 
\begin{figure}[ht]
\begin{minipage}[t]{0.45\linewidth}
\includegraphics[width=1\linewidth]{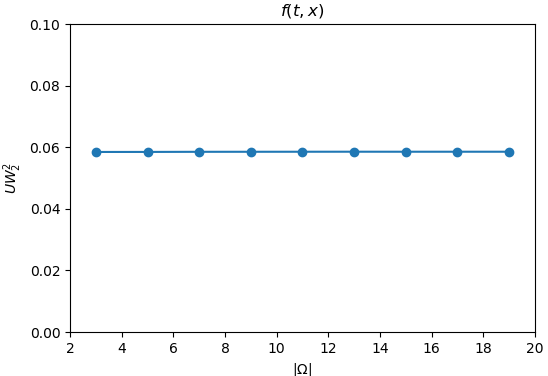}
\caption*{(a) $|\Omega|$ vs. $UW_2^2$ with $f(t,x)$}
\end{minipage}\hfill
\begin{minipage}[t]{0.45\linewidth}
\includegraphics[width=1\linewidth]{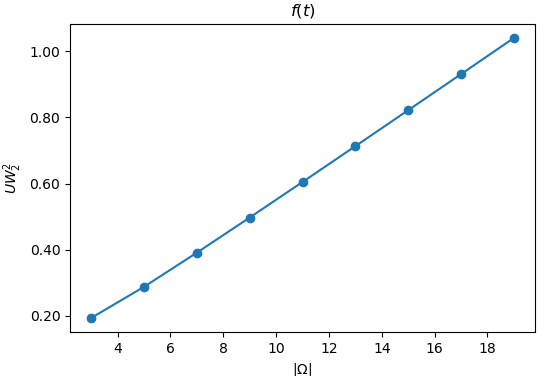}
\caption*{(b) $|\Omega|$ vs. $UW_2^2$ with $f(t)$}
\end{minipage}\hfill
\caption{\textit{Experiment 2.} The size of the domain $|\Omega|$ vs. $L^2$ unnormalized Wasserstein metrics for $f(t,x)$ and $f(t)$. x-axis represents $|\Omega|$ and y-axis represents $UW_2(\mu_0,\mu_1)^2$. Both $f(t,x)$ and $f(t)$ use $\alpha=100$.}
\label{fig:UW2-distance}
\end{figure}

\subsubsection{Experiment 2} In this experiment, we can see how the size of the domain affects the unnormalized Wasserstein distances for both a spatially dependent source function $f(t,x)$ and a spatially independent source function $f(t)$. Consider a one dimensional problem between two densities with different total masses. 
Figure \ref{fig:UW2-distance} shows plots for the size of the domain $|\Omega|$ vs. the unnormalized Wasserstein distance $UW_2$. As expected, for the spatially independent source function, the distance increases as $|\Omega|$ increases since the source function affects the transportation globally. Thus, more masses are created or removed as $|\Omega|$ increases. However, the unnormalized Wasserstein distance with the spatially dependent source function does not depend on $|\Omega|$. This actually provides an advantage of using the spatially dependent source function over the spatially independent source function when we need a consistent Wasserstein distance for any size of the domain.

\begin{figure}[ht]
\begin{minipage}{0.32\linewidth}
\includegraphics[width=1\linewidth]{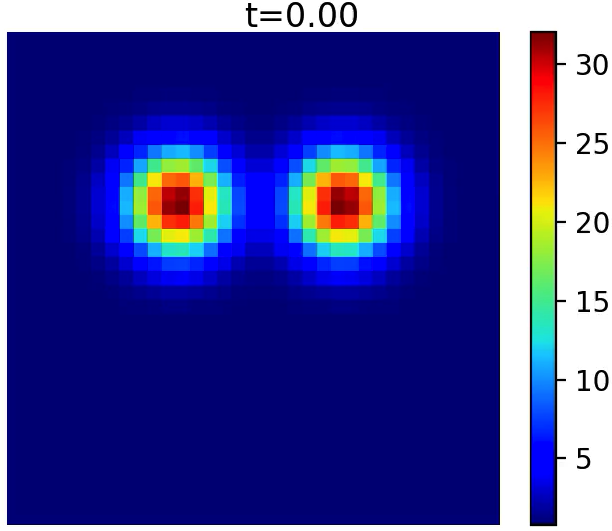}
\end{minipage}\hfill
\begin{minipage}{0.32\linewidth}
\includegraphics[width=1\linewidth]{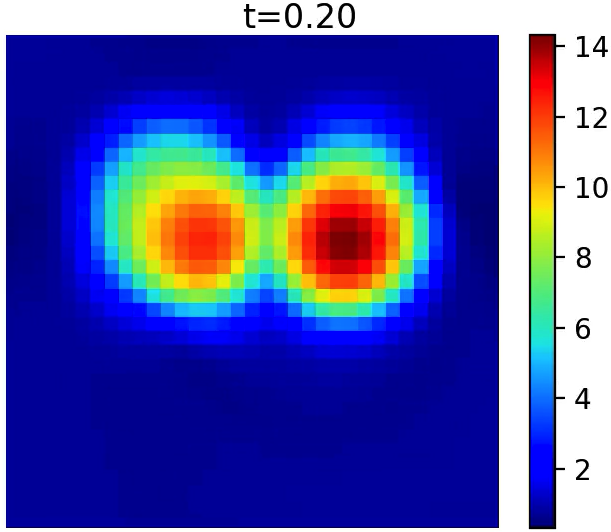}
\end{minipage}\hfill
\begin{minipage}{0.32\linewidth}
\includegraphics[width=1\linewidth]{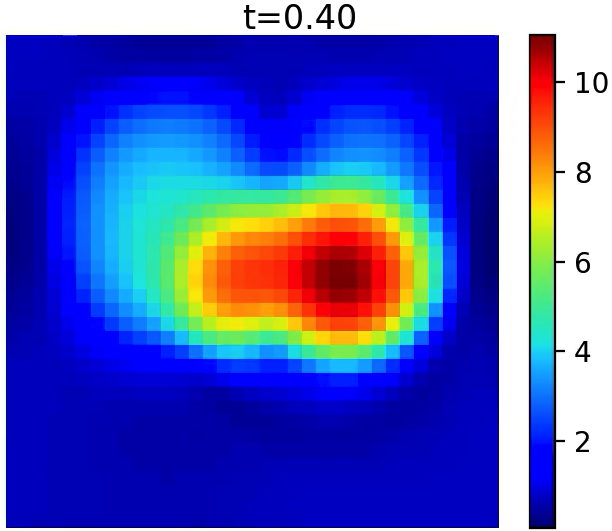}
\end{minipage}\hfill
\begin{minipage}{0.32\linewidth}
\includegraphics[width=1\linewidth]{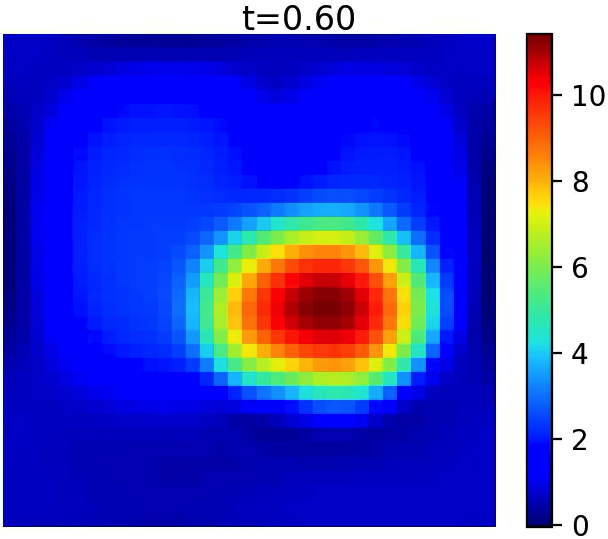}
\end{minipage}\hfill
\begin{minipage}{0.32\linewidth}
\includegraphics[width=1\linewidth]{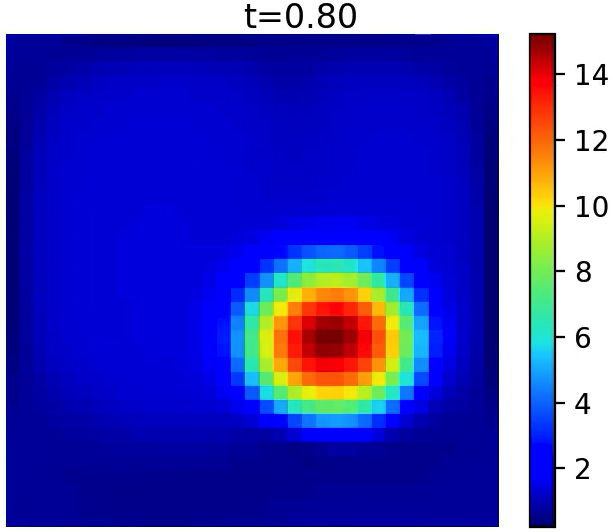}
\end{minipage}\hfill
\begin{minipage}{0.32\linewidth}
\includegraphics[width=1\linewidth]{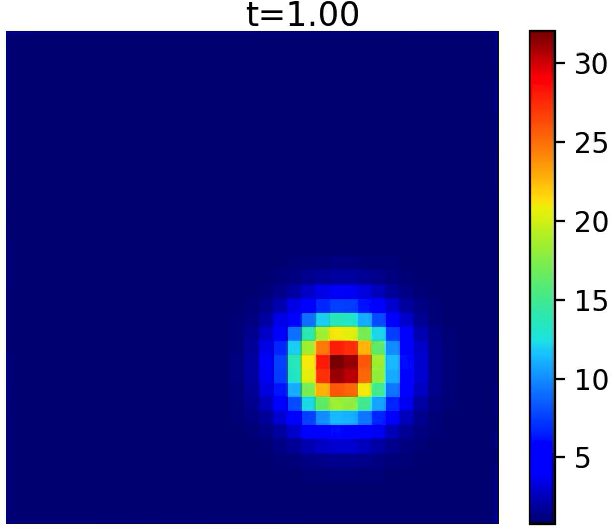}
\end{minipage}\hfill
\caption{\textit{Experiment 3.} $L^2$ generalized unnormalized optimal transportation: 2D example with a spatially dependent source function $f(t,x)$ and $\alpha=1$.}
\label{fig:UW2-2d-ex-alpha-1}
\end{figure}

\subsubsection{Experiment 3}
Consider a two dimensional problem with the following input values:
\begin{align*}
    &\mu_0 = N\left((x,y), (\frac{1}{3},\frac{1}{3}), (\frac{\sqrt{2}}{20},\frac{\sqrt{2}}{20})\right) + N\left((x,y), (\frac{2}{3},\frac{1}{3}), (\frac{\sqrt{2}}{20},\frac{\sqrt{2}}{20})\right)\\
    &\mu_1 = N\left((x,y), (\frac{2}{3},\frac{2}{3}), (\frac{\sqrt{2}}{20},\frac{\sqrt{2}}{20})\right)
\end{align*}
where $N\left((x,y); (\mu_x,\mu_y), (\sigma_x^2,\sigma_y^2)\right) = C \exp\left(-\frac{(x-\mu_x)^2}{2\sigma_x^2}-\frac{(y-\mu_y)^2}{2\sigma_y^2}\right)$ and $C$ is a constant such that $\int_\Omega N((x,y);(\mu_x,\mu_y),(\sigma_x^2,\sigma_y^2)) \d x \d y = 1$. Using the Algorithm \ref{alg:gradient-descent}, we calculate the minimizers of $UW_2(\mu_0,\mu_1)$ with a spatially dependent source function $f(t,x)$. The parameters are chosen as 
$N_x = 35, N_y = 35, N_t = 15, \tau = 0.1$.
Figure \ref{fig:UW2-2d-ex-alpha-1} and Figure \ref{fig:UW2-2d-ex-alpha-1000} show the transportation with $\alpha=1$ and $\alpha=1000$, respectively. The same phenomena can be observed as in 1D case from \textit{Experiment 1}. In other words, the geodesic with the spatially dependent source function with small $\alpha$ in Figure \ref{fig:UW2-2d-ex-alpha-1} behaves closer to the normalized (classical) optimal transport geodesic and the geodesic with large $\alpha$ in Figure \ref{fig:UW2-2d-ex-alpha-1000} behaves closer to the Euclidean geodesic.



\begin{figure}[H]
\begin{minipage}{0.32\linewidth}
\includegraphics[width=1\linewidth]{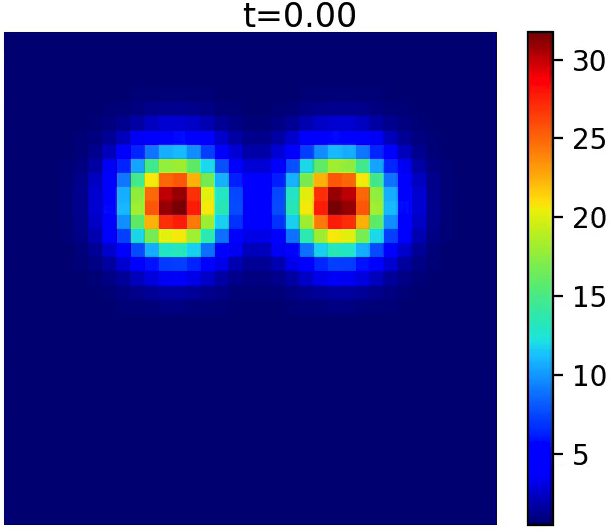}
\end{minipage}\hfill
\begin{minipage}{0.32\linewidth}
\includegraphics[width=1\linewidth]{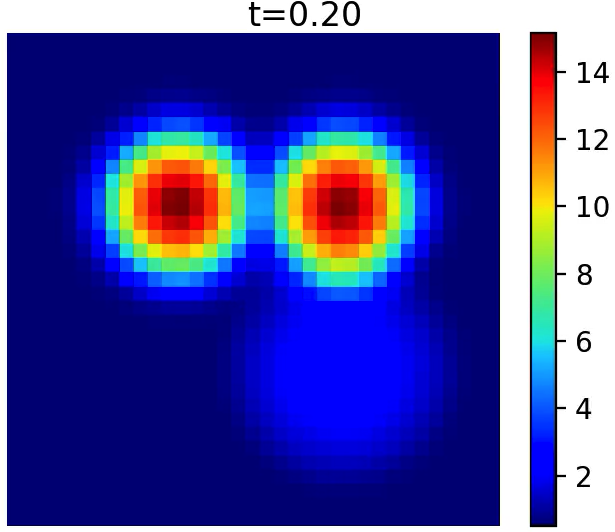}
\end{minipage}\hfill
\begin{minipage}{0.32\linewidth}
\includegraphics[width=1\linewidth]{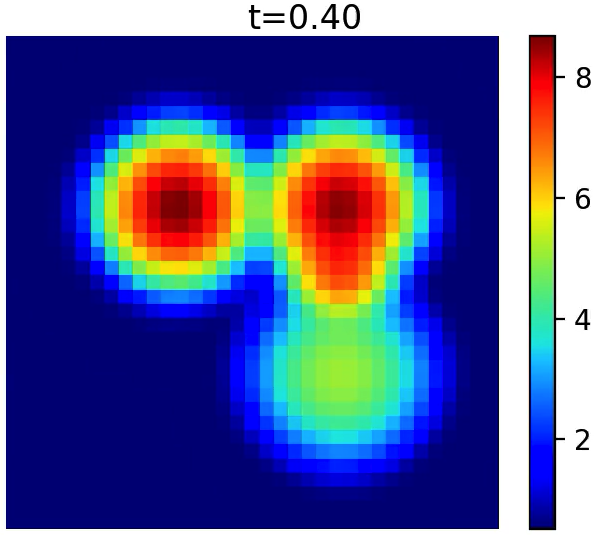}
\end{minipage}\hfill
\begin{minipage}{0.32\linewidth}
\includegraphics[width=1\linewidth]{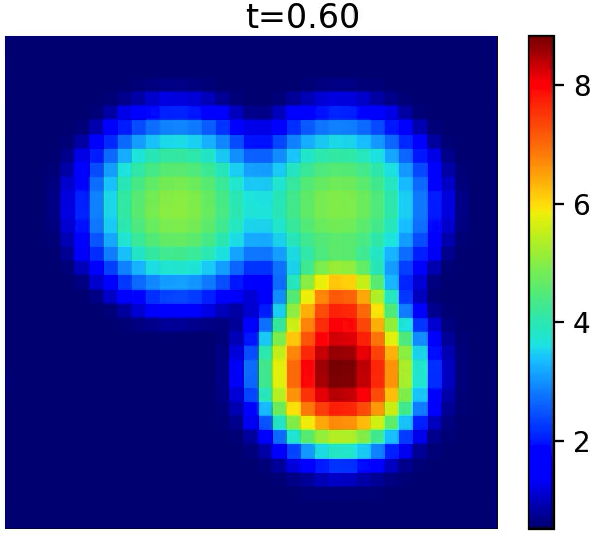}
\end{minipage}\hfill
\begin{minipage}{0.32\linewidth}
\includegraphics[width=1\linewidth]{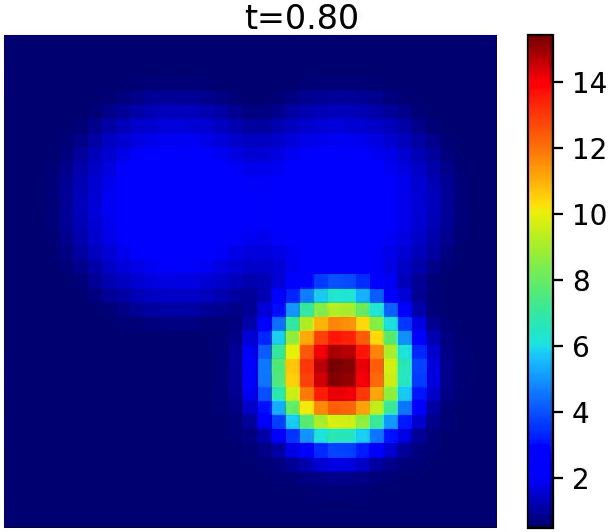}
\end{minipage}\hfill
\begin{minipage}{0.32\linewidth}
\includegraphics[width=1\linewidth]{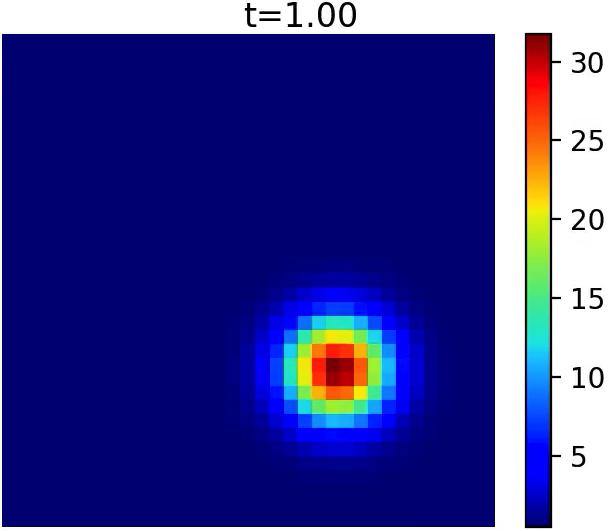}
\end{minipage}\hfill
\caption{\textit{Experiment 3.} $L^2$ generalized unnormalized optimal transportation: 2D example with a spatially dependent source function $f(t,x)$ and $\alpha=1000$.}
\label{fig:UW2-2d-ex-alpha-1000}
\end{figure}

\subsubsection{Experiment 4}
In this experiment, we are interested in calculating $L^2$ unnormalized Wasserstein distance between two images. Consider images of two cats with different total masses defined on the domain $\Omega=[0,1]\times [0,1]$.  We use Algorithm \ref{alg:gradient-descent} with the following parameters:
\begin{align*}
    N_x = 64, N_y=64, N_t = 15, \tau = 0.05.
\end{align*}
Figure \ref{fig:UW2-two-cats-05} and Figure \ref{fig:UW2-two-cats-1000} show transportation between two cats images with $\alpha=0.5$ and $\alpha=1000$, respectively.

\begin{figure}[ht]
\begin{minipage}{0.32\linewidth}
\includegraphics[width=1\linewidth]{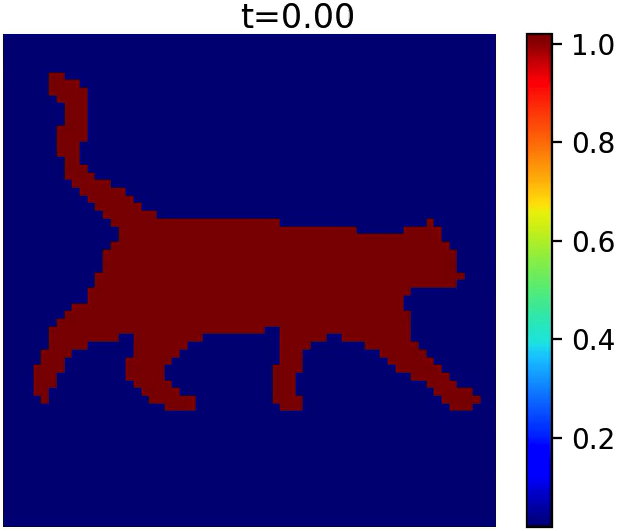}
\end{minipage}\hfill
\begin{minipage}{0.32\linewidth}
\includegraphics[width=1\linewidth]{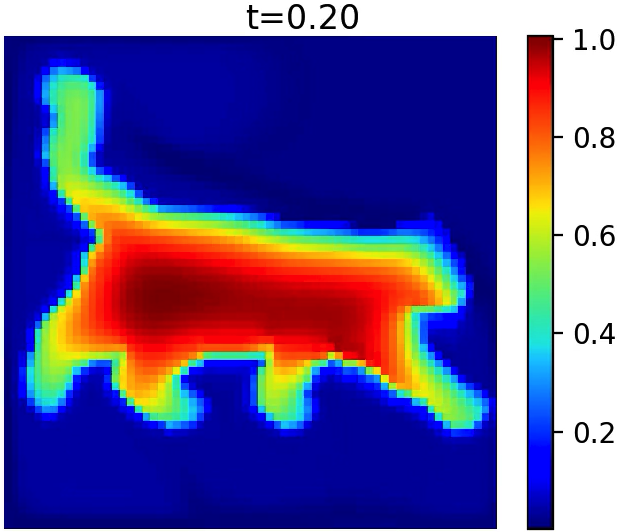}
\end{minipage}\hfill
\begin{minipage}{0.32\linewidth}
\includegraphics[width=1\linewidth]{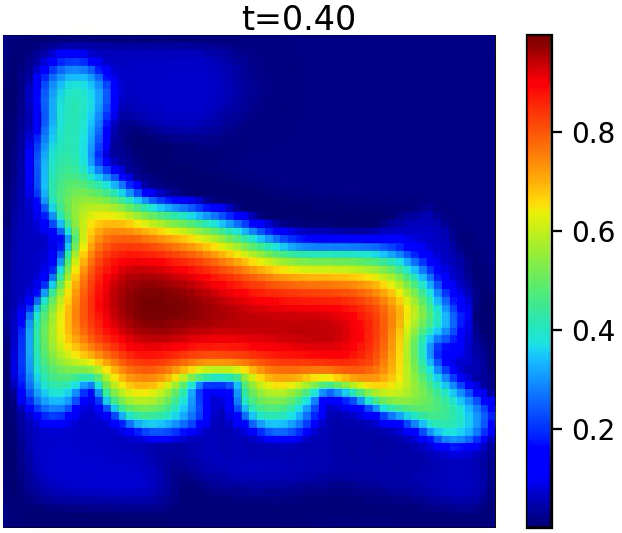}
\end{minipage}\hfill
\begin{minipage}{0.32\linewidth}
\includegraphics[width=1\linewidth]{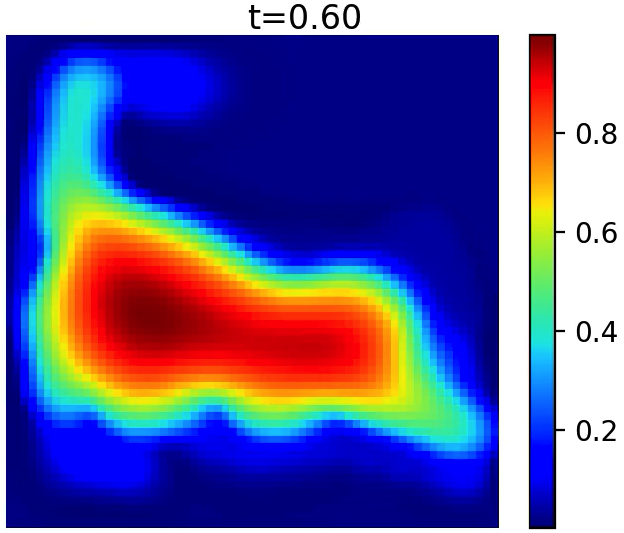}
\end{minipage}\hfill
\begin{minipage}{0.32\linewidth}
\includegraphics[width=1\linewidth]{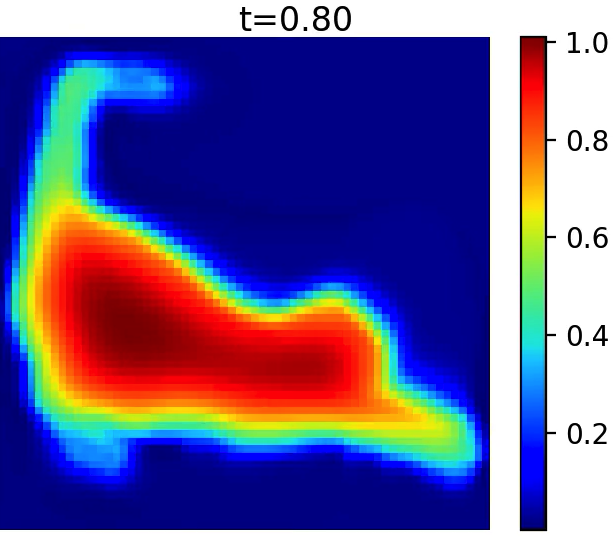}
\end{minipage}\hfill
\begin{minipage}{0.32\linewidth}
\includegraphics[width=1\linewidth]{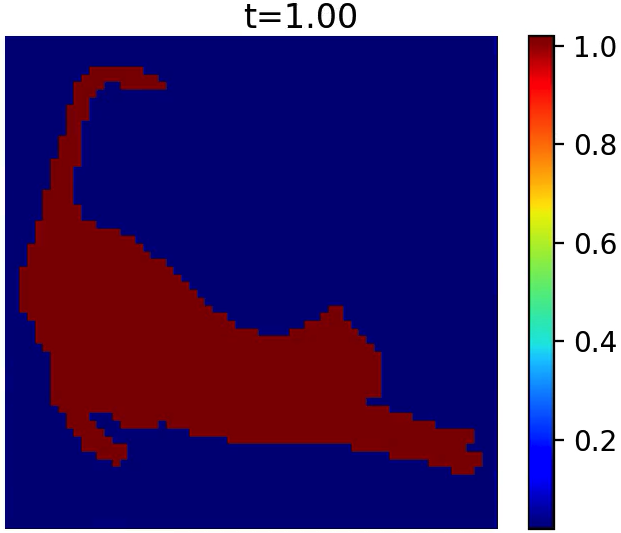}
\end{minipage}\hfill
\caption{\textit{Experiment 4.} $L^2$ generalized unnormalized optimal transportation: Two cats example with a spatially dependent source function $f(t,x)$ and $\alpha=0.5$.}
\label{fig:UW2-two-cats-05}
\end{figure}

\begin{figure}[ht]
\begin{minipage}{0.32\linewidth}
\includegraphics[width=1\linewidth]{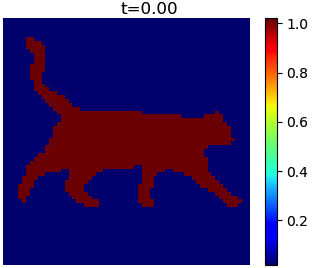}
\end{minipage}\hfill
\begin{minipage}{0.32\linewidth}
\includegraphics[width=1\linewidth]{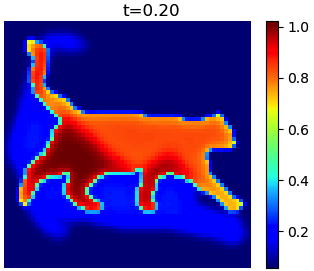}
\end{minipage}\hfill
\begin{minipage}{0.32\linewidth}
\includegraphics[width=1\linewidth]{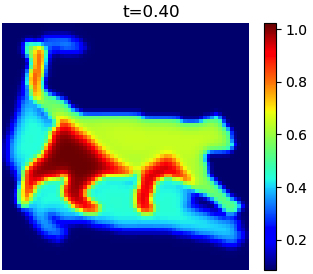}
\end{minipage}\hfill
\begin{minipage}{0.32\linewidth}
\includegraphics[width=1\linewidth]{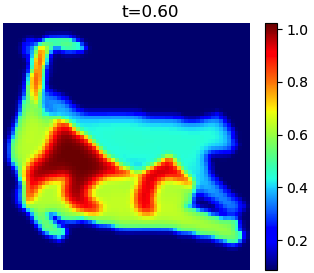}
\end{minipage}\hfill
\begin{minipage}{0.32\linewidth}
\includegraphics[width=1\linewidth]{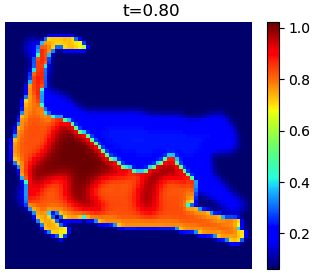}
\end{minipage}\hfill
\begin{minipage}{0.32\linewidth}
\includegraphics[width=1\linewidth]{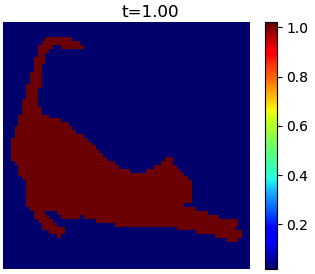}
\end{minipage}\hfill
\caption{\textit{Experiment 4.} $L^2$ generalized unnormalized optimal transportation: two cats example with a spatially dependent source function $f(t,x)$ and $\alpha=1000$.}
\label{fig:UW2-two-cats-1000}
\end{figure}

\subsection{Primal dual algorithm for $UW_1$} We conduct two numerical examples of $L^1$ unnormalized optimal transport using  Algorithm \ref{alg:L1-pdhg}.

\subsubsection{Experiment 5}
Assume $\Omega = [0,1] \times [0,1]$. Consider the two dimensional problem with the following initial densities:
\begin{align*}
    &\mu_0 = N\left((x,y), (\frac{1}{3},\frac{1}{2}), (\frac{1}{10},\frac{1}{10})\right)\\
    &\mu_1 = N\left((x,y), (\frac{2}{3},\frac{1}{2}), (\frac{1}{10},\frac{1}{10})\right) \cdot 1.4
\end{align*}
\noindent $N$ is the same as the one used in Experiment 3. We chose the parameters as:
$ N_x = N_y = 40, \epsilon = 0.001, \lambda = 0.0001, \tau = 0.01$. 
In Figure \ref{fig:UW1-ex1}, the initial densities $\mu_0$ and $\mu_1$ are shown on the top two plots and the minimizers $\bm{m}$'s are plotted for three different $\alpha$ values. Figure \ref{fig:UW1-ft} (a) shows the result from $L^1$ transportation with a spatially independent source function $f(t)$. 
This experiment shows the clear difference between $L^1$ unnormalized optimal transport with $f(t,x)$ and with $f(t)$. 
While the minimizer $\bm{m}$ from the unnormalized optimal transport with $f(t,x)$ is nonzero only on the area between two densities, the minimizer from the unnormalized optimal transport with $f(t)$ is nonzero everywhere. This is because the spatially dependent source function $f(t,x)$  affects the minimizer locally but the spatially independent source function $f(t)$ affects the minimizer globally.

\begin{figure}[h]
\begin{minipage}{0.48\linewidth}
\includegraphics[width=1\linewidth]{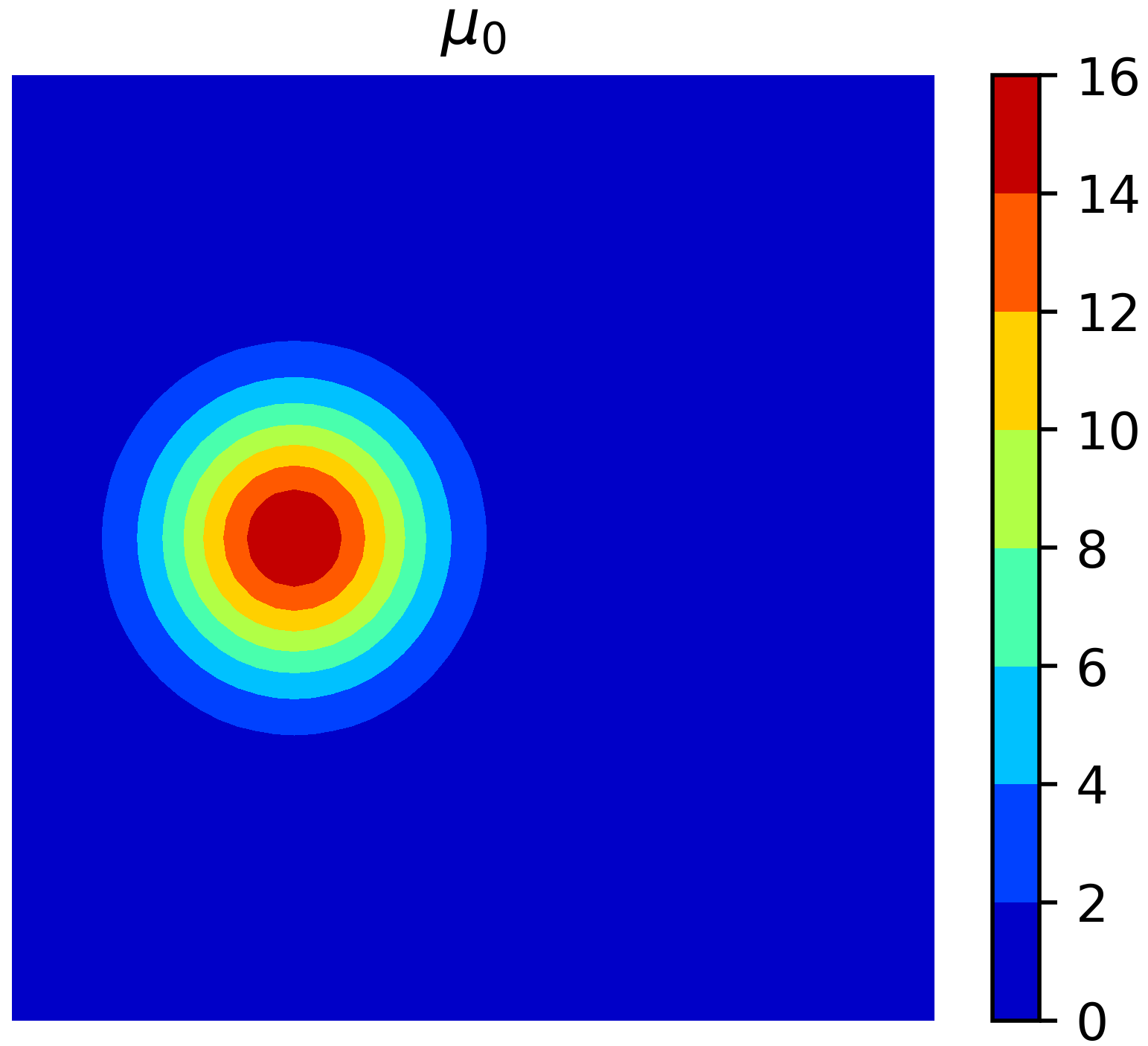}
\end{minipage}\hfill
\begin{minipage}{0.48\linewidth}
\includegraphics[width=1\linewidth]{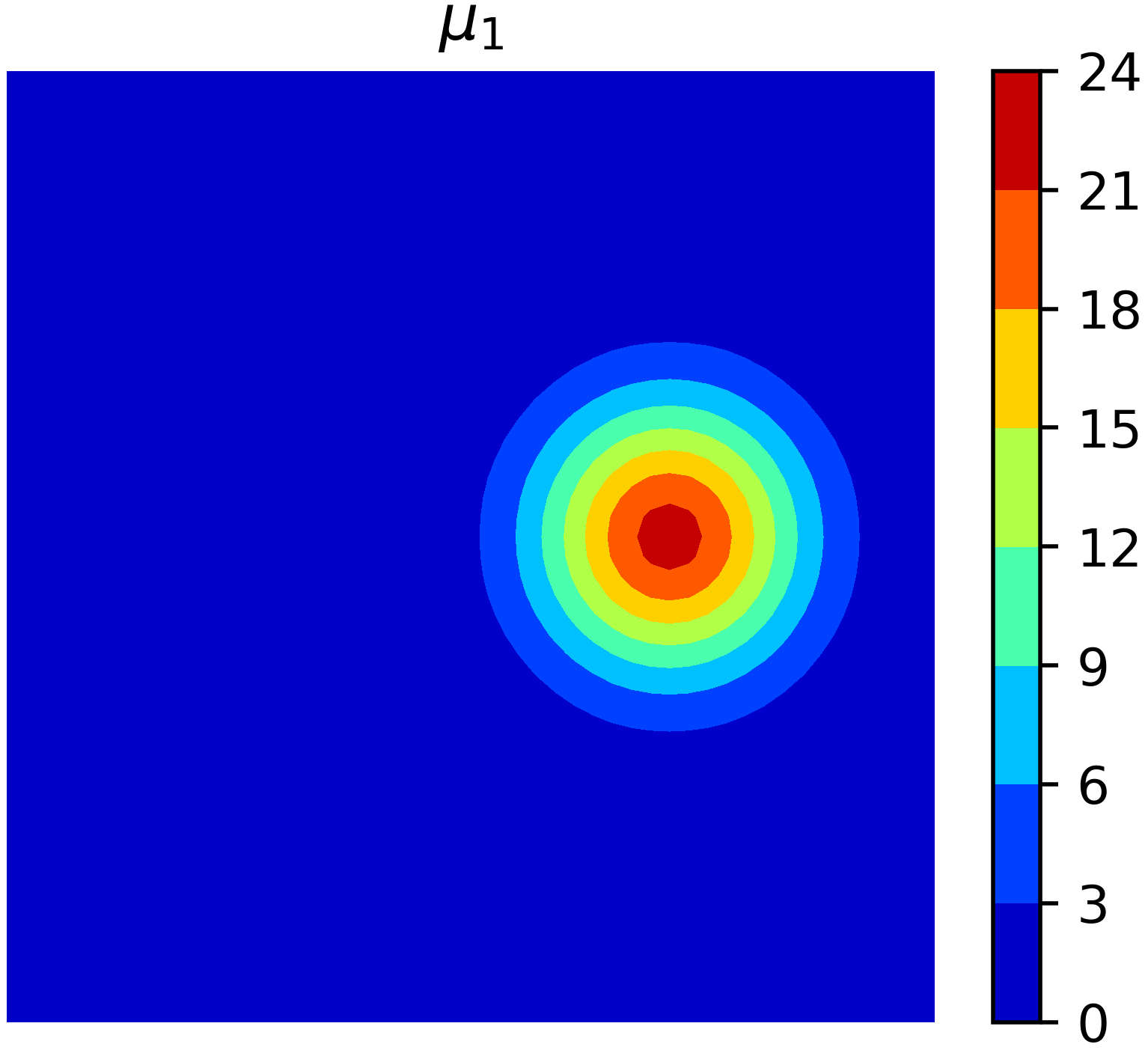}
\end{minipage}\hfill
\begin{minipage}{0.31\linewidth}
\includegraphics[width=1\linewidth]{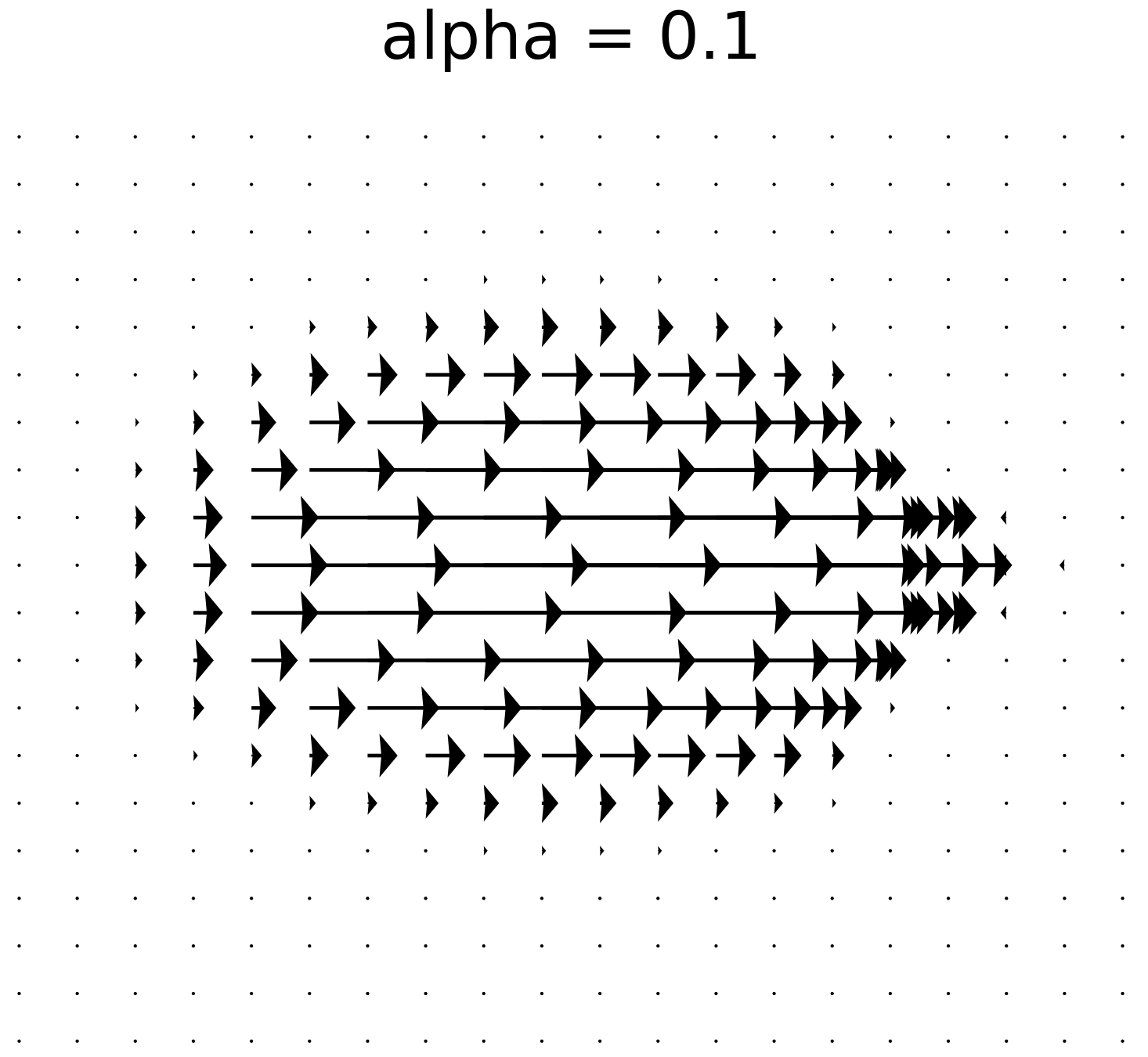}
\end{minipage}\hfill
\begin{minipage}{0.31\linewidth}
\includegraphics[width=1\linewidth]{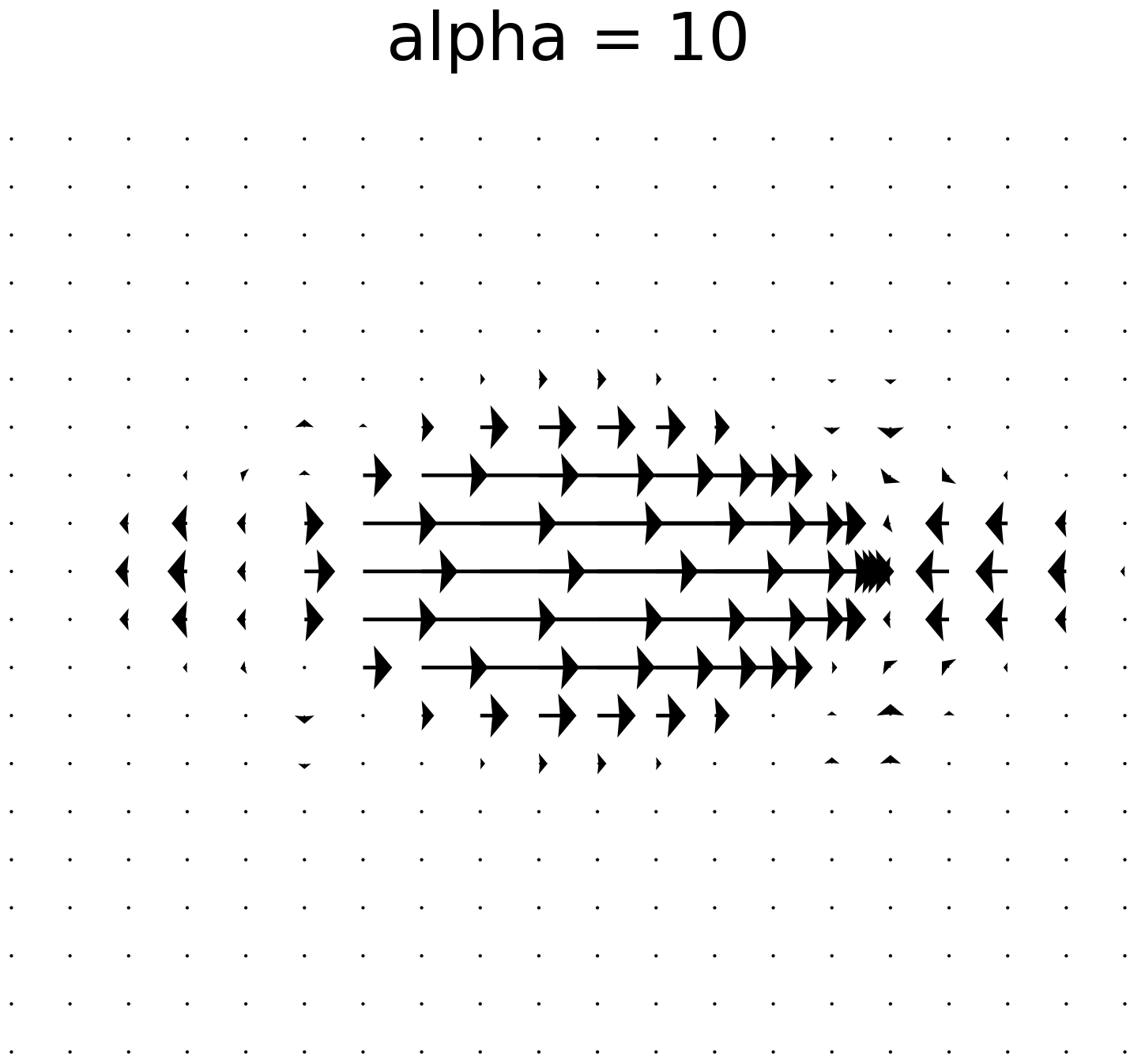}
\end{minipage}\hfill
\begin{minipage}{0.31\linewidth}
\includegraphics[width=1\linewidth]{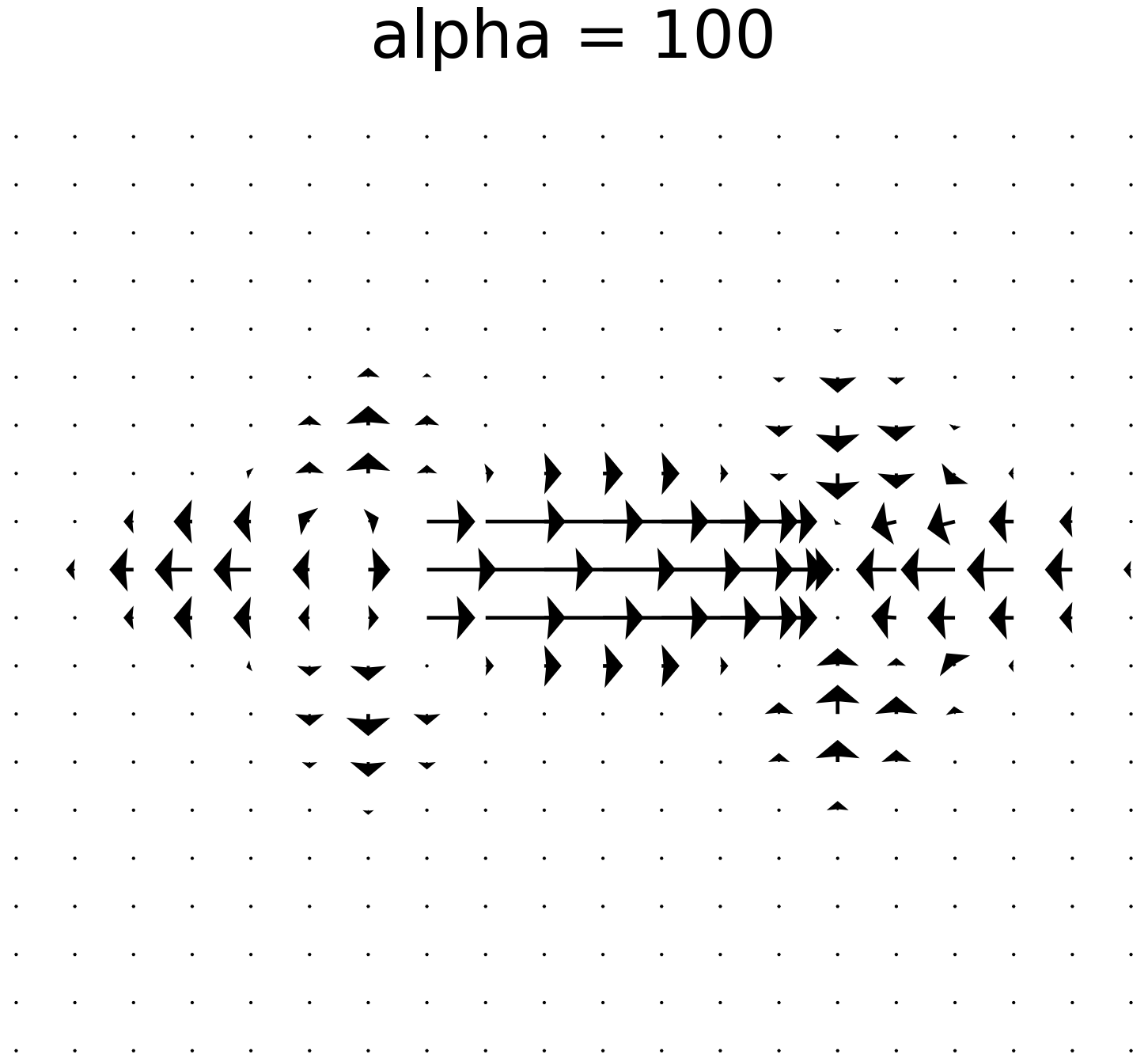}
\end{minipage}\hfill
\caption{\textit{Experiment 5.} $L^1$ unnormalized optimal transportation with $f(t,x)$ using different $\alpha$ values.}
\label{fig:UW1-ex1}
\end{figure}

\subsubsection{Experiment 6}
In this experiment, we are interested in $UW_2$ distance between two images. Consider the same 2D example as in the \textit{Experiment 4}. We use the Algorithm \ref{alg:L1-pdhg} with the following parameters:
\begin{align*}
    N_x = N_y = 256, \epsilon = 0.001, \lambda = 0.0001, \tau = 0.01.
\end{align*}
See Figure \ref{fig:UW1-ex2} to see the results of $L^1$ unnormalized optimal transport with a spatially dependent source function $f(t,x)$ with different $\alpha$ values $0.1$, $5$, and $10$. Figure \ref{fig:UW1-ft} (b) shows the result from $L^1$ transportation with a spatially independent source function $f(t)$. The result is similar to the \textit{Experiment 5}. 
The minimizer $\bm{m}$ from $L^1$ unnormalized optimal transport with $f(t)$ has nonzero values on the surrounding area of the two densities, but the minimizers from unnormalized optimal transport with $f(t,x)$ are zero on that surrounding area.


\begin{figure}[ht]
\begin{minipage}{0.48\linewidth}
\includegraphics[width=1\linewidth]{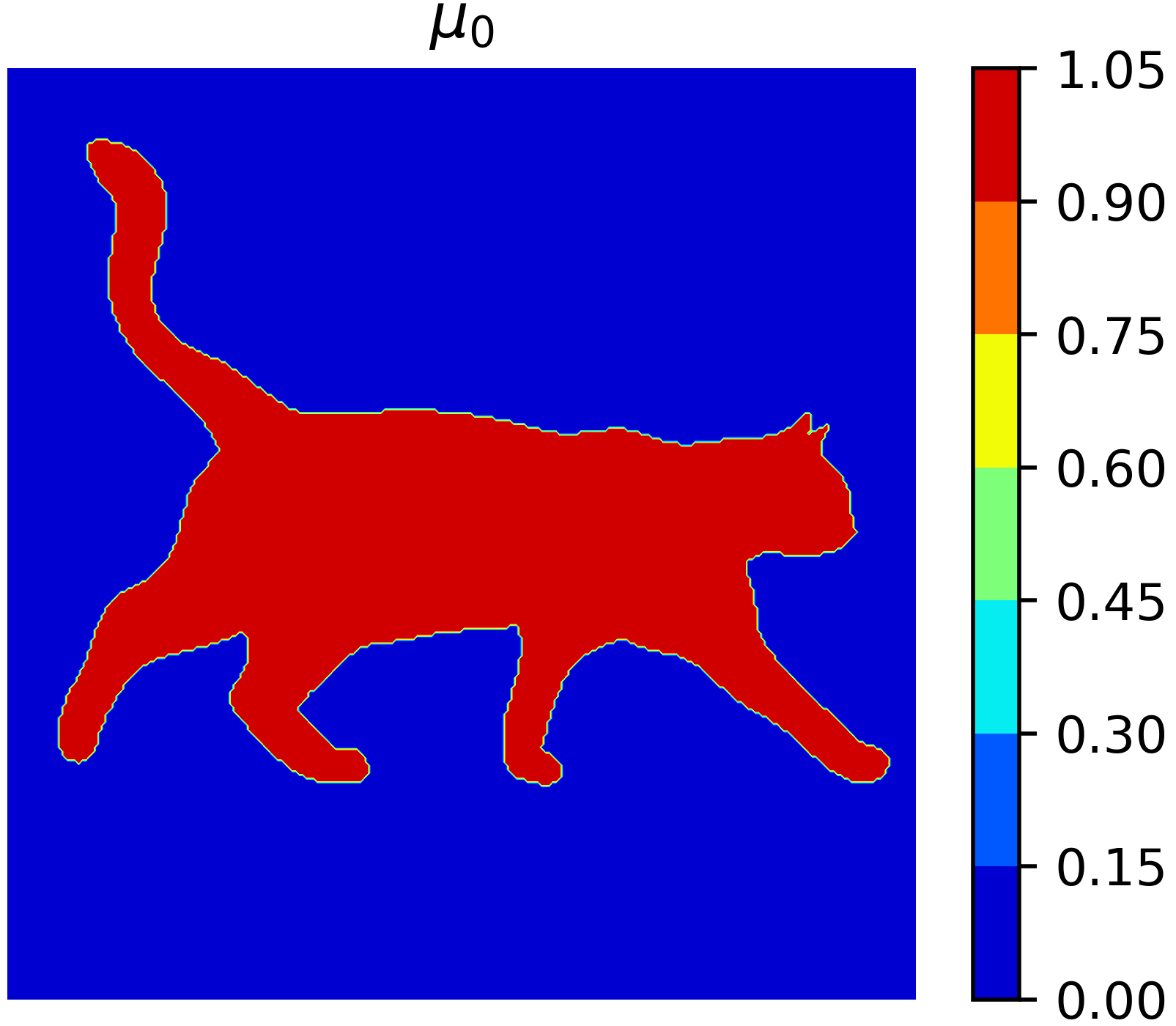}
\end{minipage}\hfill
\begin{minipage}{0.48\linewidth}
\includegraphics[width=1\linewidth]{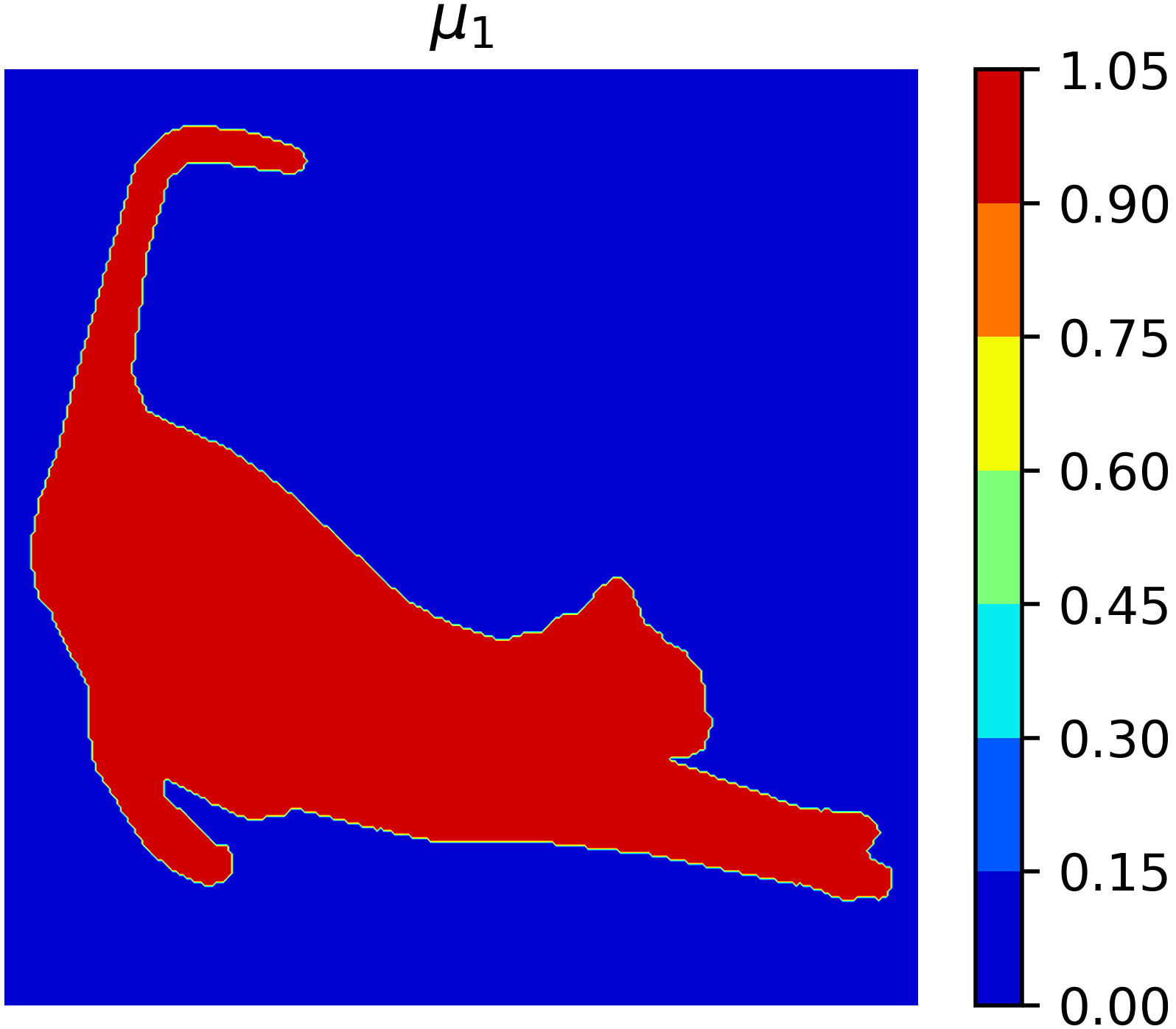}
\end{minipage}\hfill
\begin{minipage}{0.31\linewidth}
\includegraphics[width=1\linewidth]{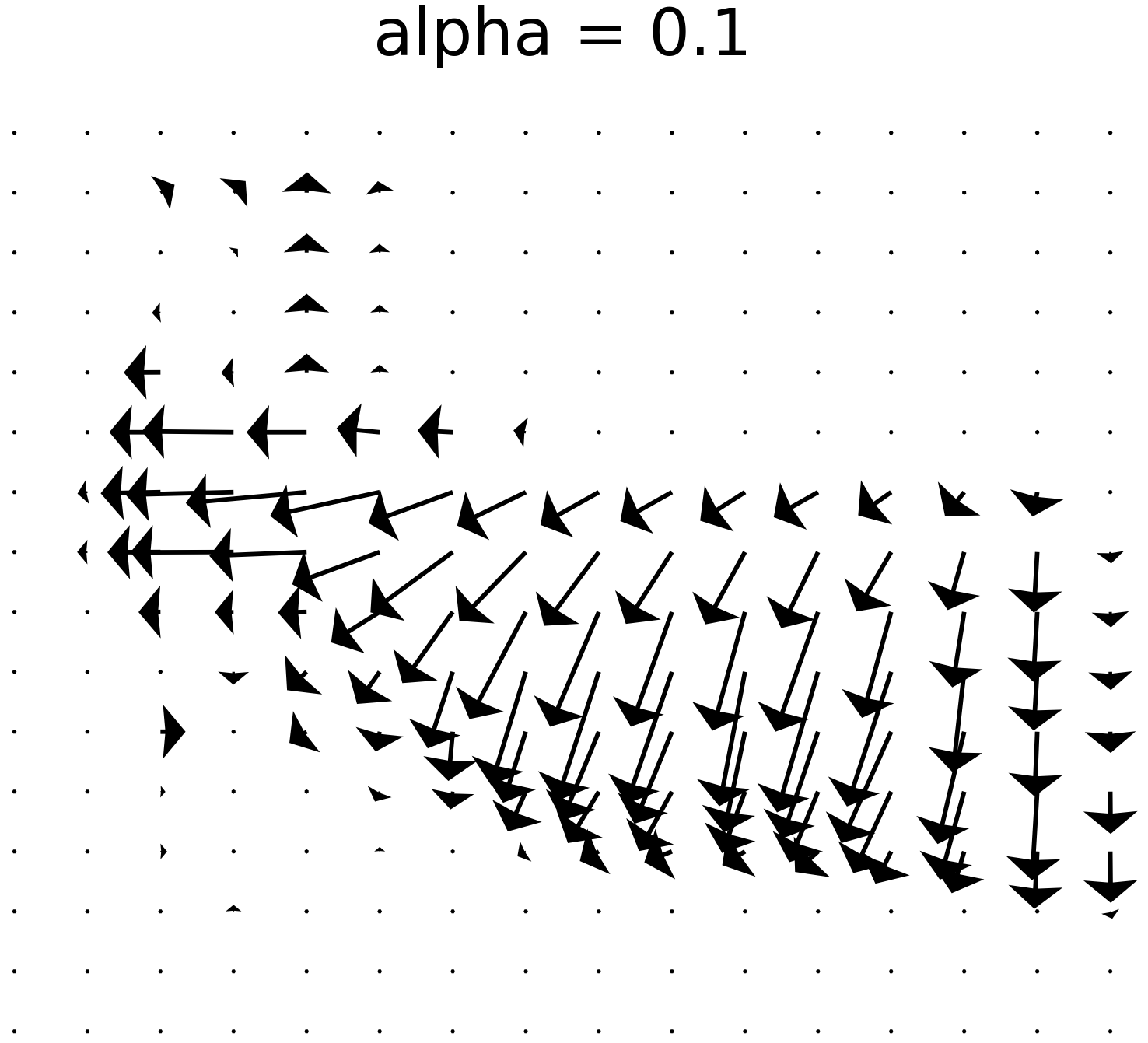}
\end{minipage}\hfill
\begin{minipage}{0.31\linewidth}
\includegraphics[width=1\linewidth]{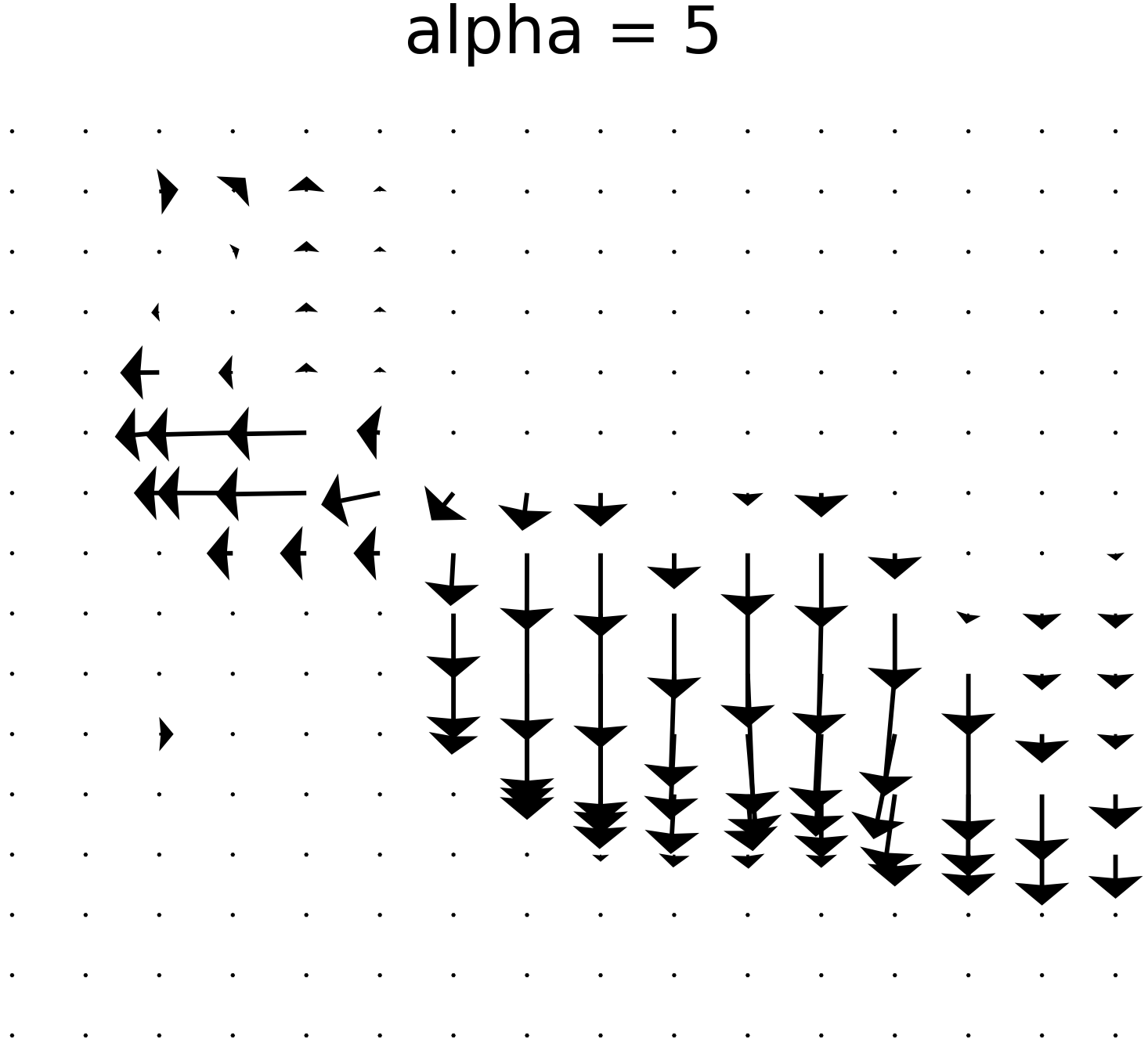}
\end{minipage}\hfill
\begin{minipage}{0.31\linewidth}
\includegraphics[width=1\linewidth]{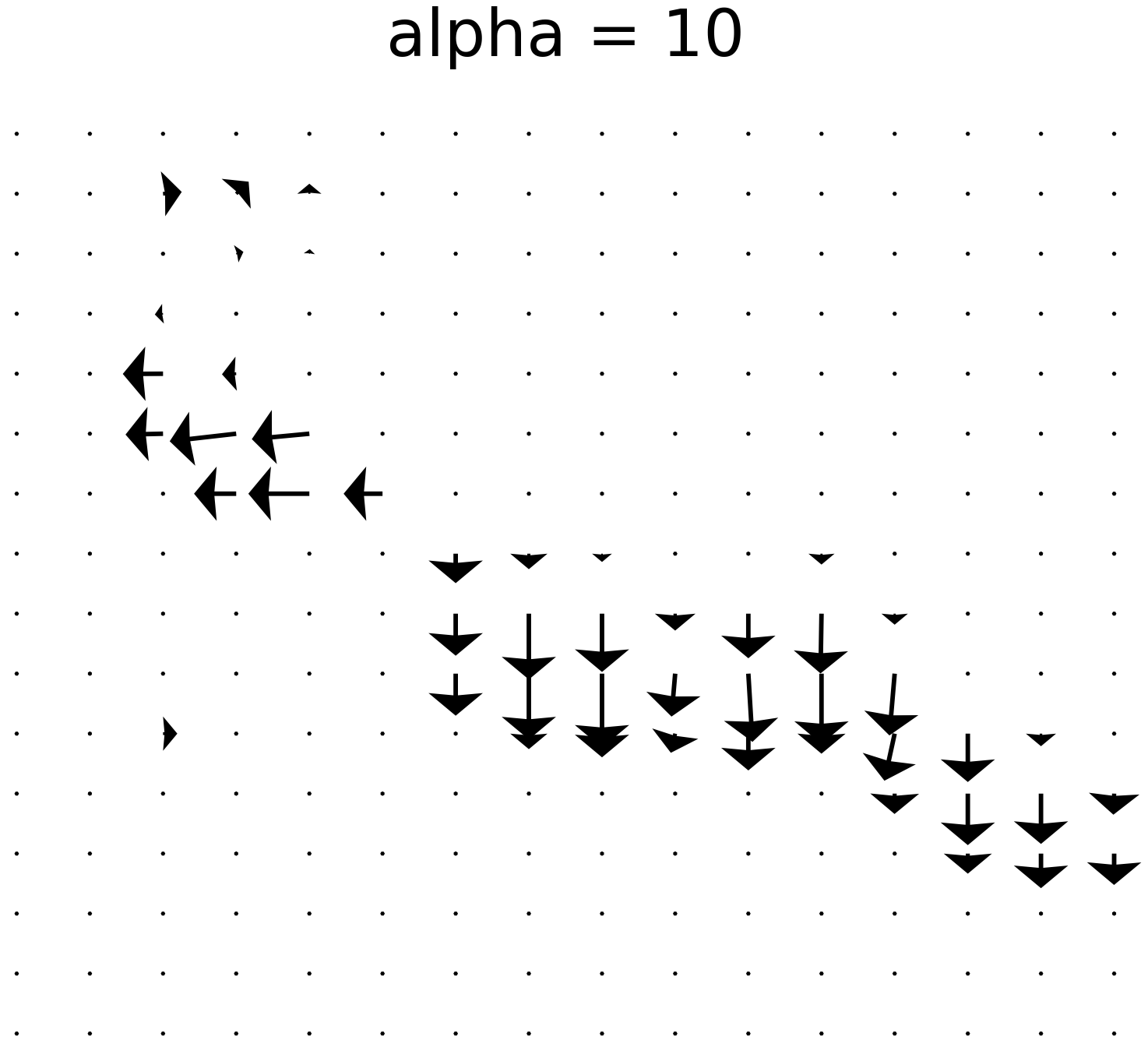}
\end{minipage}\hfill
\caption{\textit{Experiment 6.} $L^1$ unnormalized optimal transportation with $f(t,x)$ using different $\alpha$ values.}
\label{fig:UW1-ex2}
\end{figure}

\begin{figure}[ht]
\begin{minipage}{0.48\linewidth}
\includegraphics[width=1\linewidth]{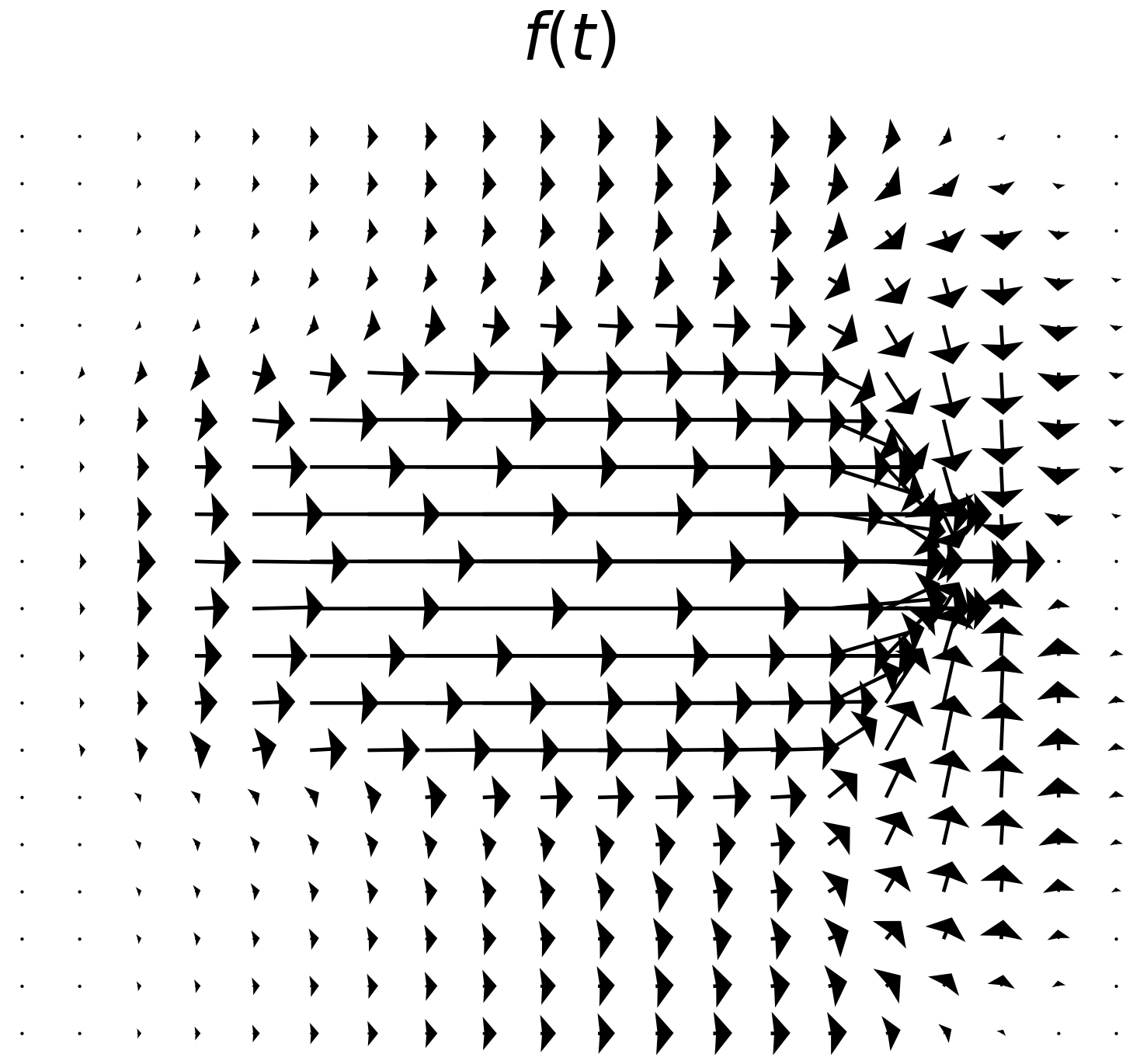}
\caption*{(a) $UW_1$ with $f(t)$ from \textit{Experiment 5}.}
\end{minipage}\hfill
\begin{minipage}{0.48\linewidth}
\includegraphics[width=1\linewidth]{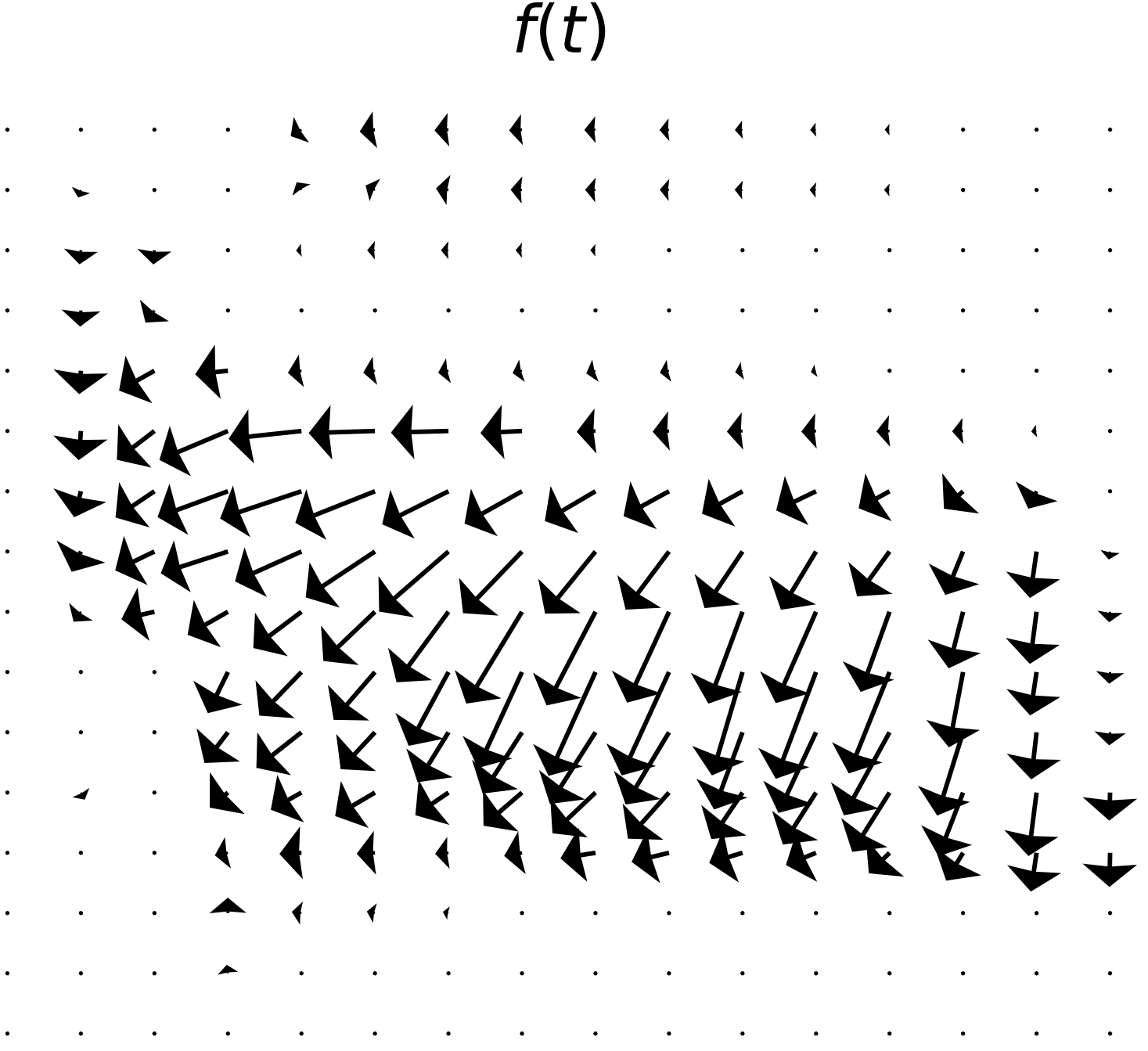}
\caption*{(b) $UW_1$ with $f(t)$ from \textit{Experiment 6}.}
\end{minipage}\hfill
\caption{$L^1$ unnormalized optimal transportation with a spatially independent source function $f(t)$.}
\label{fig:UW1-ft}
\end{figure}

\section{Discussion}
\label{sec:con}
In this paper, we introduced a new class of $L^p$ generalized unnormalized optimal transport distance with a spatially dependent source function. We presented new fast algorithms for $L^1$ and $L^2$ generalized unnormalized optimal transport. For $L^1$ case, we derived the Kantorovich duality and used a primal-dual algorithm which has explicit formulas with low computational costs. For $L^2$ case, we derived the duality formula, the generalized unnormalized Monge problem and corresponding Monge-Amp\`ere equation. We applied a weighted Laplacian operator $L_\mu$ to formulate the problem into an unconstrained optimization. The gradient operator of this unconstrained optimization is precisely the Hamilton-Jacobi equation. We apply the Nesterov accelerated gradient descent method to solve this minimization problem.

Our algorithm can be applied to general unnormalized/unbalanced optimal transport problems. It is also suitable for considering general variational mean-field games. In future works, we will derive new formulations for all related $L^p$ unbalanced or unnormalized mean-field games and design fast numerical algorithms to solve them.
\newpage

\end{document}